\numberwithin{equation}{section}
\newcommand{\Z}{{\mathbb Z}}                   
\newcommand{\R}{{\mathbb R}}                   
\newcommand{\Q}{{\mathbb Q}}                   
\newcommand{\C}{{\mathbb C}}                   
\renewcommand{\H}{{\mathbf H}}                   
\newcommand{\Mod}{{\mathcal M}}               
\renewcommand{\O}{{\mathcal O}}
\renewcommand{\L}{{\mathcal L}}
\newcommand{\CP}[1]{\mathbb{C}P^{#1}}      
\newcommand{\E}{{\mathcal E}}
\newcommand{\su}{{\mathfrak{su}}}
\newcommand{\p}{\mathfrak{p}}
\renewcommand{\d}{\operatorname{d}}
\DeclareMathOperator{\Ker}{Ker}
\DeclareMathOperator{\Gr}{Gr}
\DeclareMathOperator{\Gl}{Gl}
\DeclareMathOperator{\Dol}{Dol}
\DeclareMathOperator{\Betti}{B}
\DeclareMathOperator{\dR}{dR}
\DeclareMathOperator{\Hod}{Hod}
\DeclareMathOperator{\RH}{RH}
\DeclareMathOperator{\res}{res}
\DeclareMathOperator{\sgn}{sgn}
\DeclareMathOperator{\model}{limiting}
\DeclareMathOperator{\tr}{tr}
\DeclareMathOperator{\Ad}{Ad}
\DeclareMathOperator{\fid}{fid}
\DeclareMathOperator{\app}{app}
\DeclareMathOperator{\Sl}{Sl}
\DeclareMathOperator{\U}{U}
\newtheorem{theorem}{Theorem}
\newtheorem{prop}{Proposition}
\newtheorem{remark}{Remark}
\newtheorem{defn}[prop]{Definition}
\title[$P=W$ conjecture for the $5$-punctured sphere]
{$P=W$ conjecture in lowest degree for rank $2$ over the $5$-punctured sphere}
\author{Szil\'ard Szab\'o}
\address{Budapest University of Technology and Economics, 1111. Budapest,
Egry J\'ozsef utca 1. H \'ep\"ulet, Hungary, and 
R\'enyi Institute of Mathematics, 1053. Budapest, Re\'altanoda
utca 13-15. Hungary}
\email{szabosz@math.bme.hu, szabo.szilard@renyi.hu}
\begin{document}

\begin{abstract}
We use abelianization of Higgs bundles away from the ramification divisor 
and fiducial solutions to analyze the large scale behaviour of Fenchel--Nielsen 
co-ordinates on the moduli space of rank $2$ Higgs bundles on the Riemann 
sphere with $5$ punctures. We solve the related Hitchin WKB problem and prove 
the lowest degree weighted pieces of the $P=W$ conjecture in this case. 
\end{abstract}

\maketitle

\section{Introduction and statement of the main result}

In this paper we investigate the moduli space $\Mod_{\Dol}$ of Higgs bundles
on $\CP1$ with $5$ logarithmic points in rank $2$ and the corresponding character 
variety $\Mod_{\Betti}$, subject to specific choices of parameters. 
These spaces are complex varieties of dimension $4$. 
The first aim  of the paper is to give a complete answer 
(Propositions~\ref{prop:asymptotic_RH},~\ref{prop:monodromy_zeta2},~\ref{prop:monodromy_zeta3}) 
for these spaces to the Hitchin WKB problem raised in~\cite{KNPS}: \newline \noindent 
\textbf{Hitchin WKB problem} Consider a non-trivial $\C^{\times}$-orbit in the 
Hitchin base and a family of Higgs bundles lifting this orbit in the Dolbeault 
moduli space; determine then the asymptotic behaviour of the transport matrices 
of the associated family of representations in the character variety. 

The second, closely related goal is to use these results in order to obtain 
for these spaces one extremal graded piece of the so-called $P=W$ conjecture:  
\begin{theorem}\label{thm:main}
 Let $\Mod_{\Dol}$ and $\Mod_{\Betti}$ denote the Dolbeault moduli space and 
 character variety of $\CP1$ with $5$ logarithmic points in rank $2$. 
 Then, for every $0 \leq k \leq 4$ the regular singular Riemann--Hilbert 
 correspondence and the non-abelian Hodge correspondence
 induce an isomorphism 
 \begin{equation*}
      \Gr_P^{-k-2} H^k (\Mod_{\Dol}, \Q ) \cong \Gr^W_{2k} H^k (\Mod_{\Betti}, \Q ). 
 \end{equation*}
\end{theorem}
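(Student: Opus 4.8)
The plan is to compute the two graded pieces appearing in the statement by independent means and then to identify them under the correspondences, the identification being powered by the asymptotic description of the Fenchel--Nielsen co-ordinates obtained from the solution of the Hitchin WKB problem; recall that both the regular singular Riemann--Hilbert correspondence and the non-abelian Hodge correspondence are diffeomorphisms, so that they induce a canonical isomorphism $H^k(\Mod_{\Dol},\Q)\cong H^k(\Mod_{\Betti},\Q)$ and the whole content of the statement is that this isomorphism matches the two filtrations on the extremal pieces. On the Betti side $\Mod_{\Betti}$ is a smooth affine symplectic fourfold, so the weights on $H^k(\Mod_{\Betti},\Q)$ lie in $[k,2k]$ and $\Gr^W_{2k}H^k(\Mod_{\Betti},\Q)$ is the top, ``most mixed'' graded piece, which vanishes whenever the $k$-th cohomology is pure. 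I would first record the Betti numbers and the mixed Hodge structure of this particular character variety of the $5$-punctured sphere --- accessible through its presentation as a smooth multiplicative quiver variety and the known $E$-polynomial computations --- and then extract $\Gr^W_{2k}H^k$ for $k=0,\dots,4$, for instance by Poincar\'e--Lefschetz duality from the bottom-weight pure part or as the reduced cohomology of the dual complex of a simple normal crossings boundary divisor in a smooth compactification.

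On the Dolbeault side the Hitchin map $h\colon\Mod_{\Dol}\to B\cong\C^2$ is proper with generic fibre an abelian surface, which abelianization away from the ramification divisor describes explicitly; its discriminant $\Delta\subset B$ is the plane curve over which the genus-$2$ spectral curve acquires a node. The decomposition theorem for $Rh_*\Q$ gives the perverse Leray filtration on $H^*(\Mod_{\Dol},\Q)$, and $\Gr_P^{-k-2}H^k(\Mod_{\Dol},\Q)$ is its extremal graded piece, controlled by the extremal perverse cohomology sheaf of $h$; over the generic locus this sheaf restricts to the shifted constant sheaf, hence it is the direct sum of the intersection complex of $B$ with a semisimple perverse sheaf $R$ supported on $\Delta$, and accordingly $\Gr_P^{-k-2}H^k(\Mod_{\Dol},\Q)\cong H^k(B,\Q)\oplus\mathbb{H}^{k-2}(B,R)$, i.e.\ a copy of $\Q$ concentrated in degree $0$ together with the hypercohomology of $R$. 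I would pin $R$ down from the local model of $h$ near $\Delta$ --- again furnished by abelianization together with the fiducial solutions, which exhibit the degenerate Hitchin fibres as compactified Jacobians of one-nodal genus-$2$ curves --- and cross-check the resulting dimensions by a $\C^\times$-localization computation on $\Mod_{\Dol}$.

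The decisive step is to show that the isomorphism $H^k(\Mod_{\Dol},\Q)\cong H^k(\Mod_{\Betti},\Q)$ carries the extremal perverse piece onto $\Gr^W_{2k}H^k(\Mod_{\Betti},\Q)$, and this is where Propositions~\ref{prop:asymptotic_RH},~\ref{prop:monodromy_zeta2} and~\ref{prop:monodromy_zeta3} come in: they show that the Fenchel--Nielsen length co-ordinates on $\Mod_{\Betti}$ are asymptotic to explicit functions of the Hitchin base co-ordinates, so that a ray $\C^\times\!\cdot b_0\subset B$ lifted to $\Mod_{\Dol}$ is transported to a path escaping to infinity in $\Mod_{\Betti}$ along which the transport matrices of the corresponding family of representations grow at precisely the rate prescribed by the WKB exponents; maximal growth forces a maximal Jordan block in the monodromy at infinity, hence the maximal weight jump. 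Realizing both the $\Delta$-supported summand of the extremal perverse sheaf and the top-weight cohomology classes inside the limiting mixed Hodge structures of such one-parameter degenerations --- rays transverse to $\Delta$, and rays running to infinity in $B$ --- and matching ``perverse level along the ray'' with ``order of degeneration'' with ``weight level'', one identifies the two filtrations on the extremal pieces; running this over a spanning family of rays and over $k$ yields the theorem.

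I expect this last matching step to be the main obstacle. Converting the analytic WKB estimates into the algebraic statement about filtered cohomology requires precise control of the degeneration exactly along the ramification and discriminant loci, where abelianization breaks down and the spectral curve becomes nodal: one must treat the non-generic Hitchin fibres and their non-generic limiting mixed Hodge structures by hand, and verify that these singular fibres contribute no weight-$2k$ classes beyond those predicted by the extremal perverse cohomology sheaf. Getting this boundary bookkeeping right --- so that the two a priori distinct filtrations are seen genuinely to coincide rather than merely to have equal graded dimensions --- is where the real work lies.
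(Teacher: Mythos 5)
Your strategy---compute both graded pieces by independent means and then match---is a genuinely different plan from the paper's, but it contains a concrete error on the Dolbeault side that would derail it, and the matching step is considerably vaguer than what is actually needed.

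The misidentification: you claim $\Gr_P^{-k-2}H^k(\Mod_{\Dol},\Q)\cong H^k(\mathcal{B},\Q)\oplus\H^{k-2}(\mathcal{B},R)$ with $R$ a perverse summand supported on the discriminant curve $\Delta$. However, by the de~Cataldo--Migliorini flag characterization recalled in Subsection~\ref{subsec:perverse_filtration}, the extremal graded piece is $\operatorname{Im}\bigl(H^k(\Mod_{\Dol},\Q)\to H^k(H^{-1}(Y_{-2}),\Q)\bigr)$ for a \emph{generic} point $Y_{-2}\in\mathcal{B}\setminus\Delta$. A $\Delta$-supported summand has zero stalk at such a point and hence contributes nothing; the extremal piece is controlled entirely by the IC of the local system $R^kH_*\Q$ over $\mathcal{B}^{\circ}$. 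More precisely, for each $k$ the relevant summand is $\H^{-2}\bigl(\mathcal{B},\,{}^{\p}\mathcal{H}^{k-2}(\mathbf{R}H_*\Q[4])\bigr)$, i.e.\ monodromy invariants of $H^k$ of the abelian-surface fiber; a perverse sheaf supported on the one-dimensional $\Delta$ has vanishing $\H^{-2}$, and $H^k(\mathcal{B},\Q)=0$ for $k\geq 1$ since $\mathcal{B}\cong\C^2$ is contractible, so your proposed formula produces zero in degrees $1\leq k\leq 4$ whereas the correct answer is nonzero. The whole program of "pinning $R$ down from the local model of $h$ near $\Delta$" is aimed at an object that does not enter the lowest graded piece at all.

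The matching step is also not what the asymptotics are used for. The paper does not pass through limiting mixed Hodge structures or Jordan blocks of monodromy at infinity. Instead it uses two concrete topological characterizations: $\Gr_P^{-k-2}$ as image in the cohomology of a single fiber $H^{-1}(Rq^*)$, and $\Gr^W_{2k}$ as classes dual to boundary tori around $k$-fold intersections of compactifying divisor components (Subsection~\ref{subsec:weight_filtration}). Propositions~\ref{prop:monodromy_zeta2}, \ref{prop:monodromy_zeta3}, \ref{prop:twist}, \ref{prop:twist3} and~\ref{prop:p2p30} then show that $\RH\circ\psi$ carries $H^{-1}(Rq^*)\cong T^4$ homotopically onto the normal $T^4$ at a quadruple intersection point $D_1\cap D_2\cap D_3\cap D_4$, with the four circle factors identified by the integrals of $B_{\L_{\E}}$ over a basis $A_1,A_2,B_1,B_2$ of $H_1(X_{q^*},\Z)$, and subtori $T_I^k$ (fixing $4-k$ of the phases) land on normal tori at generic points of $\bigcap_{j\notin I}D_j$. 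This is a direct class-by-class identification of the two graded pieces, not a dimension count followed by an abstract matching. Your heuristic that "maximal growth forces a maximal Jordan block, hence a maximal weight jump" does not by itself produce an isomorphism; even if made rigorous it at best constrains dimensions, and you yourself flag that the "genuine coincidence of filtrations" is the real difficulty --- the paper's torus-matching technique is precisely the device that supplies the isomorphism rather than just an equality of Hodge numbers.
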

The weights appearing in the theorem represent the lowest (respectively, highest)  
possibly non-trivial weights of $P$ (respectively, $W$). 
For our weight conventions, see Sections~\ref{subsec:weight_filtration} 
and~\ref{subsec:perverse_filtration}. 
Notice that since $\Mod_{\Betti}$ is a smooth $4$-dimensional affine variety, 
by the Andreotti--Frankel theorem~\cite{Andreotti_Frankel} the only degrees 
where it may have non-trivial cohomology are $0 \leq k\leq 4$. 

Even though this paper contains a detailed study of just one special case of the $P=W$ 
conjecture in lowest weight, many of our technical results are valid for an 
arbitrary number $n\geq 5$ of parabolic points. 
We expect that it will be possible to treat more general cases along the same lines. 

Let us now give some motivational background for this study. 
The $P=W$ conjecture is a major open problem in non-abelian Hodge theory,
formulated by M.~de~Cataldo, T.~Hausel and L.~Migliorini~\cite{HdCM}
as a correspondence between the (decreasing) perverse Leray filtration $P$ 
induced by the Hitchin map on the cohomology of a Dolbeault moduli space and 
the (increasing) weight filtration $W$ of Deligne's mixed Hodge structure on 
the cohomology of the associated character variety (Betti space).
Recent years have brought intense activity toward establishing various cases
of the $P=W$ conjecture.
The starting point was the paper of de~Cataldo, Hausel and Migliorini~\cite{HdCM} 
where it was proven in rank $2$ over compact curves. 
As an important further contribution, M.~de~Cataldo, D.~Maulik and J.~Shen~\cite{dCMS}
established it for curves of genus $2$.
C.~Felisetti and M.~Mauri~\cite{FM} proved it for character varieties admitting a
symplectic resolution, i.e. in genus $1$ and arbitrary rank, and in genus $2$ and rank $2$.
The author has established the conjecture for complex $2$-dimensional
moduli spaces of rank $2$ Higgs bundles with irregular singularities over $\CP1$
corresponding to the Painlev\'e cases~\cite{Sz_PW}.
J.~Shen and Z.~Zhang~\cite{Shen_Zhang} proved it for five infinite families of
moduli spaces of parabolic Higgs bundles over $\CP1$.

The $P=W$ conjecture has been generalized in various interesting contexts.
A.~Harder showed a similar statement for elliptic Lefschetz fibrations using
methods coming from toric surfaces~\cite[Theorem~4.5]{Harder}.
Z.~Zhang~\cite{Zhang_cluster} found a related phenomenon for the weight filtration of
certain $2$-dimensional cluster varieties and the perverse filtration of elliptic
fibrations with constrained singular fibers.
A motivation for the $P=W$ conjecture  was the so-called curious hard Lefschetz
conjecture of T.~Hausel, E.~Letellier and F.~Rodriguez-Villegas~\cite{HLRV}, 
that has been confirmed by A.~Mellit~\cite{Mellit}. 

Among the various generalizations and analogues of the $P=W$ conjecture is an 
intriguing geometric counterpart formulated by L.~Katzarkov, A.~Noll, P.~Pandit and C.~Simpson~\cite[Conjecture 1.1]{KNPS} and C.~Simpson~\cite[Conjecture 11.1]{Sim} 
that is quite relevant to our approach; this version is now called Geometric 
$P=W$ conjecture. 
Roughly speaking, the Geometric $P=W$ conjecture asserts the existence of a certain 
homotopy commutative diagram involving the Riemann--Hilbert map, non-abelian 
Hodge correspondence, the Hitchin map and the natural map from the character 
variety to the topological realization of its dual boundary complex. 
An immediate consequence of validity of this conjecture is that
the homotopy type of the topological space of the dual boundary complex of the
character variety is that of a sphere of given dimension, therefore finding
this homotopy type is a first consistency check of the conjecture. 
The Geometric $P=W$ conjecture has also attracted considerable attention in recent times.
A.~Komyo~\cite{Kom} used an explicit geometric description to prove that the
homotopy type of the dual boundary complex of the character variety for $\CP1$ with
$5$ logarithmic points and group $\Gl(2,\C)$ (that is, the Betti space we will 
deal with in this paper) is that of the $3$-sphere. 
C.~Simpson~\cite{Sim} generalized Komyo's result to the case of arbitrarily
many logarithmic points on $\CP1$, in rank $2$, by proving that the
homotopy type of the dual boundary complex is that of $S^{2n-7}$;
for this purpose, he introduced Fenchel--Nielsen type co-ordinates that
will be widely used in this paper. 
T.~Mochizuki~\cite{Moc} solved the closely related Hitchin WKB problem for 
non-critical paths. 
M.~Mauri, E.~Mazzon and M.~Stevenson~\cite[Theorem~6.0.1]{MMS} used Berkovich
space techniques to show that the dual boundary complex of a log-Calabi--Yau
compactification of the $\Gl(n,\C)$ character variety of a $2$-torus is
homeomorphic to $S^{2n-1}$.
L.~Katzarkov, A.~Harder and V.~Przyjalkowski have formulated a version of the
cohomological $P=W$ conjecture for log-Calabi--Yau manifolds and their mirror
symmetric pairs, and in~\cite[Section~4]{KHP} discussed a geometric version thereof.
The author established the Geometric $P=W$ conjecture in the
Painlev\'e cases in~\cite{Sz_PW} 
via asymptotic
abelianization of solutions of Hitchin's equations. In joint work with
A.~N\'emethi~\cite{N-Sz}, the author gave a second proof for the same cases
using different techniques, namely plumbing calculus.
As far as the author is aware, up to date these latter articles are the only 
ones in which the full assertion of the Geometric $P=W$ conjecture has been
confirmed, rather than just its implication on the homotopy type of the
dual boundary complex.
It is remarkable that the geometrical understanding
of the moduli spaces developed in~\cite[Section~6]{N-Sz} is quite reminiscent
to the description of the weight filtration in terms of dual torus fibrations
appearing in~\cite[Section~4]{KHP} (up to the difference that the latter paper
deals with the case of a smooth elliptic anti-canonical divisor rather than a singular one).

Previously, F.~Loray and M.~Saito~\cite{LS} studied the algebraic geometric structure 
of the moduli space that we consider, endowed with its de Rham complex structure. 
R.~Donagi and T.~Pantev~\cite{DP} investigated Hecke transforms on this space and 
proved the Geometric Langlands correspondence for it. 
The paths that we consider in
Propositions~\ref{prop:asymptotic_RH},~\ref{prop:monodromy_zeta2},~\ref{prop:monodromy_zeta3}
are homologically non-trivial loops that do not satisfy the non-critical 
condition, therefore our results do not directly follow from previous study 
of T.~Mochizuki~\cite{Moc} (though we make use of methods of that paper). 

We will achieve our goals by refining the approach pioneered in our 
previous paper~\cite{Sz_PW}. 
Namely, using asymptotic abelianization we reduce the study to the classical 
abelian Hodge theory and Riemann--Hilbert correspondence treated in detail 
for instance in~\cite{Goldman_Xia}. 
Specifically, we will make use of technical results of T.~Mochizuki~\cite{Moc},
L.~Fredrickson, R.~Mazzeo, J.~Swoboda and H.~Weiss~\cite{FMSW} and
R.~Mazzeo, J.~Swoboda, H.~Weiss and ~F.~Witt~\cite{MSWW} describing the
large-scale behaviour of solutions of Hitchin's equations, combined with
C.~Simpson's Fenchel--Nielsen type co-ordinates of the character variety~\cite{Sim}.
The studies in~\cite{FMSW} and~\cite{MSWW} were inspired by physical 
considerations pertinent to the WKB-analysis of Hitchin's equations
given by D.~Gaiotto, G.~Moore and A.~Neitzke~\cite{GMN}, where the authors stated 
a conjecture about the large scale Riemannian structure of the Hodge moduli spaces. 
In a certain sense, our work therefore points out a connection between two 
seemingly unrelated circles of ideas: the $P=W$ conjecture on the algebraic 
topology of the Hodge moduli spaces on the one hand, and the Gaiotto--Moore--Neitzke 
conjecture on their Riemannian geometry on the other hand. 
This fits nicely into the broader picture of topology and Riemannian geometry having 
influence on one another, the bridge between them being built by geometric analysis. 
In particular, it will become clear from our proofs that integrals of the canonical 
Liouville $1$-form along some paths in the spectral cover govern the behaviour of 
the non-abelian Hodge and the Riemann--Hilbert correspondences; the same kind of 
integrals show up in~\cite{GMN}. 

\noindent {\bf Acknowledgements:} The author would like to thank T.~Hausel, 
M.~Mauri, R.~Mazzeo, T.~Mochizuki, A.~N\'emethi and C.~Simpson for useful discussions.
 During the preparation of this manuscript, the author was supported by the \emph{Lend\"ulet} Low Dimensional Topology grant of the Hungarian Academy of 
 Sciences and by the grants K120697 and KKP126683 of NKFIH.

\section{Basic notions and preparatory results}\label{sec:notations}

\subsection{Moduli spaces of tame harmonic bundles}

Consider $X = \CP1$ with co-ordinates $z$ and $w = z^{-1}$, endowed with the standard Riemannian metric. 
We denote by $\O$ and $K$ the sheaves of holomorphic functions and holomorphic $1$-forms respectively on $\CP1$. 
We set 
\begin{equation}\label{eq:divisor}
 t_0 = -\frac 1k, \quad t_1 = 0, \quad t_2 = 1, \quad t_3 = -1, \quad t_4 = \frac 1k
\end{equation}
for some $0 < k < 1$. 
These choices will not be used until Section~\ref{sec:Geometry_period}, 
so the results of all preceding sections are valid for any quintuple of 
distinct points in $\CP1$. 
We consider the simple effective divisor  
$$
  D = t_0 + t_1 + t_2 + t_3 + t_4 
$$
and set 
$$
  L = K (D). 
$$
By an abuse of notation, we will also denote by $D$ the support set of $D$. 
Finally, we fix a point $x_0 \in \CP1 \setminus D$. 
Much of the following discussion has a straightforward generalization to simple effective divisors of higher length too. 

For $0\leq j \leq 4$ we fix 
\begin{equation}\label{eq:Dolbeault_weights}
   \alpha_j^- = \frac 14, \quad  \alpha_j^+ = \frac 34  
\end{equation}
that will serve as parabolic weights in the Dolbeault complex structure. Notice that 
$$
  \sum_{j=0}^4 ( \alpha_j^- +  \alpha_j^+ ) = 5. 
$$
We will write 
$$
  \vec{\alpha} = (\alpha_j^-, \alpha_j^+ )_{j=0}^4 . 
$$

The basic object of our study will be a certain Hodge moduli space $\Mod_{\Hod}$ of 
\emph{tame harmonic bundles}~\cite{Sim_Hodge} of rank $2$ and 
parabolic degree $0$ on $\CP1$ with parabolic structure at $D$. 
We will describe this moduli space from two perspectives called the Dolbeault and the de Rham moduli spaces. 
Consider a smooth vector bundle $V$ of rank $2$ and degree $-5$ over $\CP1$. 
Then, the equations defining harmonic bundles are 
\emph{Hitchin's equations}~\cite{Hit} 
\begin{align}
 \bar{\partial}_{\E} \theta & = 0 \label{eq:complex_Hitchin} \\
 F_h + [\theta , \theta^{\dagger} ] & = 0 \label{eq:real_Hitchin}
\end{align}
for a $(0,1)$-connection $\bar{\partial}_{\E}$ on $V$, a Hermitian metric $h$ on $V$ and a section $\theta$ of $End(V)\otimes \Omega^{1,0}_{\CP1}$ over $\CP1 \setminus D$, 
where $F_h$ is the curvature of the Chern connection $\nabla_{h}$ associated to $(\bar{\partial}_{\E}, h)$ and $\theta^{\dagger}$ is the section of $End(V)\otimes \Omega^{0,1}_{\CP1}$ 
obtained by taking the adjoint of the endomorphism part of $\theta$ with respect to $h$ and the complex conjugate of its form-part. 
The reason of the terminology ``harmonic bundle'' is the fact that the equations imply that the map $h$ is harmonic from (the universal cover of) 
the Riemann surface to the symmetric space $\mbox{Gl}_2(\C )/ \mbox{U}(2)$. 
The behaviour of $\theta$ and $h$ is assumed to satisfy the so-called \emph{tameness} condition at each $t_j$, namely $h$ should admit a lift along any ray to $t_j$ which grows 
at most polynomially in Euclidean distance. 

Hitchin's equations are presented above from the Dolbeault point of view. Let us first describe the boundary behaviour of the data from this perspective. 
Let us denote by $\E$ the holomorphic vector bundle $(V, \bar{\partial}_{\E} )$ on $\CP1 \setminus D$. 
It turns out that there exists an extension of the holomorphic bundle $\E$ over $D$ such that the Higgs field has at most logarithmic poles at $D$. 
A \emph{parabolic structure} on $\E$ at $D$ is by definition a filtration 
\begin{equation}\label{eq:Dolbeault_parabolic_structure}
   0 \subset \ell_j \subset \E|_{t_j}
\end{equation}
of the fiber of $\E$ at every $t_j \in D$ that is stabilized by $\mbox{res}_{t_j} \theta$. 
We assume that the Higgs field $\theta$ is {\it strongly parabolic}, 
meaning that the action of $\mbox{res}_{t_j} \theta$ both on $\ell_j$ and on 
$\E|_{t_j}/ \ell_j$ is trivial. 
Then, in the Dolbeault complex structure $\Mod_{\Hod}$ parameterizes $\vec{\alpha}$-stable parabolic Higgs bundles with Higgs field having at most logarithmic poles at $D$ such that 
the eigenvalues of the residue of the associated Higgs field at $t_j$ vanish and the parabolic weights of the underlying holomorphic vector bundle 
in the Dolbeault picture at $t_j$ are equal to $\alpha_j^{\pm}$. The latter assumption on parabolic weights encodes a certain growth behaviour of the evaluation of the metric $h$ 
on elements of a local holomorphic trivialization. The moduli space of such logarithmic parabolic Higgs bundles is known to be a $\C$-analytic manifold  
$$
  \Mod_{\Dol} (\vec{0}, \vec{\alpha})
$$
called \emph{Dolbeault moduli space}, whose underlying smooth manifold is $\Mod_{\Hod}$. 

Let us now turn to the de Rham point of view. 
It is known that if $(\bar{\partial}_{\E}, h, \theta)$ is a tame harmonic bundle then the connection 
$$
  \nabla = \nabla_h + \theta + \theta^{\dagger}
$$
is integrable, and the underlying holomorphic vector bundle admits an extension over $D$ with respect to which $\nabla^{1,0}$ has regular singularities. 
The associated de Rham moduli space parameterizes $\vec{\beta}$-stable parabolic integrable connections on $V$ with regular singularities near the punctures $t_j$, 
with eigenvalues of its residue given by 
\begin{equation}\label{eq:dR_eigenvalues}
  \mu_j^{\pm} =  \alpha_j^{\pm} 
\end{equation}
and parabolic weights given by 
\begin{equation}\label{eq:dR_weights}
 \beta_j^{\pm} = \alpha_j^{\pm} . 
\end{equation}
Again, a parabolic structure on the underlying holomorphic vector bundle at $D$ is defined as a flag of its fiber over $t_j \in D$ that is stabilized by 
$\mbox{res}_{t_j} \nabla^{1,0}$ and such that its action on the first graded piece of the filtration be $\mu_j^-$. 
The \emph{de Rham moduli space} of such parabolic integrable connections with regular singularities will be denoted by 
$$
  \Mod_{\dR} (\vec{\mu}, \vec{\beta}); 
$$
it is a $\C$-analytic manifold  with underlying smooth manifold $\Mod_{\Hod}$. 

It follows from the above discussion that there exists a canonical diffeomorphism 
\begin{equation}\label{eq:NAHC}
 \psi\colon \Mod_{\Dol} (\vec{0}, \vec{\alpha}) \to \Mod_{\dR} (\vec{\mu}, \vec{\beta})
\end{equation}
called \emph{non-abelian Hodge correspondence}.

\subsection{Character variety, Riemann--Hilbert correspondence, 
dual boundary complex}\label{ssec:RH}
We will need a third point of view of harmonic bundles, called Betti side. 
The \emph{Betti moduli space} (or character variety) $\Mod_{\Betti}(\vec{c}, \vec{0})$ parameterizes filtered local systems on $\CP1 \setminus D$ with prescribed conjugacy 
class of its monodromy around every $t_j$ and growth order of parallel sections on rays emanating from the punctures, up to simultaneous conjugation by elements of $\mbox{PGl}(2,\C)$. 
We will now describe the value and role of parameters $\vec{c}$. 
Namely, the monodromy transformation of an integrable connection $\nabla$ in $\Mod_{\dR} (\vec{\mu}, \vec{\beta})$ along a positively oriented simple loop in $\CP1$ separating 
$t_j$ from the other parabolic points has eigenvalues 
\begin{equation}\label{eq:Betti_eigenvalues}
 c_j^{\pm} = \exp (-2\pi \sqrt{-1} \mu_j^{\pm}) = \exp (-2\pi \sqrt{-1} \alpha_j^{\pm} ) = \pm \sqrt{-1} 
\end{equation}
and all weights of the associated filtration equal to $0$.

It is known that the map 
\begin{equation}\label{eq:RH}
   \RH\colon \Mod_{\dR} (\vec{\mu}, \vec{\beta}) \to \Mod_{\Betti}(\vec{c}, \vec{0})
\end{equation}
mapping any integrable connection to its (filtered) local system of vector spaces is a $\C$-analytic isomorphism, called \emph{Riemann--Hilbert map}. 

It is known that $\Mod_{\Betti}(\vec{c}, \vec{0})$ is an affine algebraic variety, which is smooth for generic choices of the parameters. 
We will denote by $\overline{\Mod}_{\Betti}(\vec{c}, \vec{0})$ a smooth compactification by a simple normal crossing divisor $D_{\Betti}$. 
Such a compactification exists by Nagata's compactification theorem~\cite{Nag} 
combined with Hironaka's theorem on the existence of resolutions of
singularities in characteristic $0$~\cite{Hir}. 
\begin{defn}
  The \emph{dual complex} of $D_{\Betti}$ is the simplicial complex $\mathbb{D} D_{\Betti}$  whose vertices are in bijection with irreducible components of $D_{\Betti}$, 
  and whose $k$-faces are formed by $(k+1)$-tuples of vertices such that the intersection of the corresponding components is non-empty. We will denote the 
  $k$-skeleton of $\mathbb{D}D_{\Betti}$ by $\mathbb{D}_k D_{\Betti}$, and the topological realization of $\mathbb{D} D_{\Betti}$ by $|\mathbb{D} D_{\Betti}|$. 
\end{defn}
We will require $D_{\Betti}$ to be a very simple normal crossing divisor, meaning that any such non-empty intersection of components is connected. 
The above procedure may be applied to any quasi-projective smooth variety $X$, and an important result due to Danilov~\cite{Dan} states that the homotopy type 
of the simplicial complex is independent of the chosen compactification. 
We will apply it to $\Mod_{\Betti}(\vec{c}, \vec{0})$, and we will call the resulting simplicial complex its \emph{dual boundary complex}, 
denoted by $\mathbb{D} \partial \Mod_{\Betti}(\vec{c}, \vec{0})$. 
A.~Komyo~\cite{Kom} showed that for character varieties of rank $2$ 
representations with $k = 5$ parabolic points the homotopy type of the dual 
boundary complex is that of the sphere $S^3$. 
C.~Simpson~\cite{Sim} generalized this result to character varieties of 
the complement of $k\geq 5$ parabolic points, by showing that for 
$X = \Mod_{\Betti}(\vec{c}, \vec{0})$ the dual boundary complex is homotopy 
equivalent to the sphere $S^{2k-7}$. 

\subsection{Topological description of the weights in mixed Hodge structure}
\label{subsec:weight_filtration}

Another closely related consequence of the fact that 
$\Mod_{\Betti}(\vec{c}, \vec{0})$ is a smooth affine algebraic variety is that 
its cohomology spaces carry a mixed Hodge structure (MHS), defined by 
P.~Deligne~\cite{DelHodge2}. Let us recall the topological characterization of 
the weights in MHS, following~\cite[Section~6.5]{EN}. 
In what follows, we will often drop $\vec{c}, \vec{0}$ from the notation and 
write $\Mod_{\Betti}$ for the character variety. 

In this section we adopt the point of view 
of~\cite{EN} and consider homology groups rather than cohomology; 
application of the standard duality operation is implicitly meant whenever 
we compare a homology group with a cohomology group. 
This involves switching the signs of the degrees of the weight filtration. 
Let $\overline{\Mod}_{\Betti}$ be a smooth compactification of $\Mod_{\Betti}$ by 
a simple normal crossing divisor $D_{\Betti}$. 
We spell out the general construction of the mixed Hodge structure of 
$X\setminus Y$ given in~\cite{EN} for $X = \overline{\Mod}_{\Betti}$ 
and $Y = D_{\Betti}$. 

The filtration is the abutment of the spectral sequence associated to a double 
complex $A_{**}$ endowed with a filtration $W$. 
For any $p\geq 1$ we denote by $\tilde{D}^p$ the disjoint union of the $p$-fold 
intersections of the irreducible components of $D_{\Betti}$, and set 
$\tilde{D}^0 = \overline{\Mod}_{\Betti}$. 
We denote by $C_t^{\pitchfork} (\tilde{D}^s)$ the free abelian group generated by 
dimensionally transverse $t$-cycles in $\tilde{D}^s$, i.e. cycles for the 
$0$-perversity function. 
We let 
$$
   A_{s,t} = C_t^{\pitchfork} (\tilde{D}^{-s}), 
$$
where $s\leq 0, t\geq 0$. The filtration $W$ is defined by 
$$
    W_{s} = \bigoplus_{p\leq s}  A_{p,t}. 
$$
There exists a well-defined intersection morphism 
$$
    \cap\colon C_t^{\pitchfork} (\tilde{D}^s) \to C_{t-2}^{\pitchfork} (\tilde{D}^{s+1}) 
$$
compatible with $W$, turning $A_{**}$ into a filtered double complex. 
It is shown in~\cite[Theorem~1.5]{EN} that the associated spectral sequence 
$E_{st}^r$ degenerates at page $r = 2$ and abuts to the filtration 
$$
    E_{st}^{\infty } \otimes \Q = \Gr^W_{-t} H_{s+t} (\Mod_{\Betti}, \Q ). 
$$
The filtration $W$ on the right-hand side is then equal to Deligne's weight filtration. 

The topological representatives of  $\Gr^W_{-2k} H_{k}$ corresponding to the 
choices $t = 2k$ and $s = -k$ are generated by classes of the following form 
(for the similar cases $k=1$ and $k=2$ over surfaces see~\cite[Example~6.9]{EN}). 
Take a generic point $Q$ in the $k$-fold intersections of the divisors 
$$
    Q \in \tilde{D}^k \setminus \tilde{D}^{k+1} .
$$
Let the corresponding divisor components be denoted without loss of generality 
$Y_1, \ldots , Y_k$. 
The preimage $\Pi^{-1} (Q)$ of $Q$ in the normal bundle of $Y_1 \cap \cdots \cap Y_k$ 
in $X\setminus Y = \Mod_{\Betti}$ deformation retracts onto a $k$-dimensional real torus 
(the boundary of a tubular neighbourhood of $Y_1\cap \cdots \cap Y_k$). 
If one considers all $k$-tuple intersections of divisor components, then the classes 
of these tori generate $\Gr^W_{-2k} H_{k}$, and the dual cohomology classes 
generate $\Gr^W_{2k} H^k$.

\subsection{Hitchin maps and bases}

\begin{prop}\label{prop:Hitchin_base}
\begin{enumerate}
 \item For every strongly parabolic $\vec{\alpha}$-stable Higgs bundle $(\E , \theta )$ with logarithmic singularities at $D$ we have 
 $$
    \mbox{tr} (\theta ) \equiv 0. 
 $$
 \item There exists a linear subspace 
$$
  \mathcal{B} = \{ q : \quad q(t_j ) = 0 \;\;\mbox{for all}\;\; 0 \leq j \leq 4 \} \subset H^0 (\CP1, L^{\otimes 2}) \cong \C^7 
$$
of dimension $2$ over $\C$ such that an $\vec{\alpha}$-stable Higgs bundle $(\E , \theta )$ with logarithmic singularities at $D$ is strongly parabolic if and only if 
$$
   \det (\theta ) \in  \mathcal{B}. 
$$
\end{enumerate}
\end{prop}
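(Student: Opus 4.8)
The plan is to treat the two parts in turn, both being essentially bookkeeping with the logarithmic/parabolic structure at the five points together with a degree count on $\CP1$.

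For part (1), I would argue that $\tr(\theta)$ is a section of $L = K(D)$ on $\CP1$, since $\theta$ has at most logarithmic poles at $D$ and is holomorphic elsewhere. Now $\deg L = \deg K + \deg D = -2 + 5 = 3 \geq 0$, so this alone does not force vanishing. However, the strong parabolicity assumption says that $\res_{t_j}\theta$ acts trivially on both $\ell_j$ and $\E|_{t_j}/\ell_j$, hence $\res_{t_j}\theta$ is nilpotent, so $\tr(\res_{t_j}\theta) = \res_{t_j}\tr(\theta) = 0$ for every $j$. Therefore $\tr(\theta)$ is a section of $L$ vanishing at all five points of $D$, i.e. a section of $K = L(-D)$, which has degree $-2 < 0$; hence $\tr(\theta) \equiv 0$. (One may alternatively note that the trace of a strongly parabolic Higgs field is automatically holomorphic with these vanishing conditions, but the degree argument already concludes.)

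For part (2), by part (1) the Higgs field is trace-free, so the characteristic polynomial of $\theta$ is $\lambda^2 + \det\theta = 0$, and $\det\theta$ is a priori a section of $L^{\otimes 2} = K^{\otimes 2}(2D)$. A standard computation gives $\dim H^0(\CP1, L^{\otimes 2}) = \deg(L^{\otimes 2}) + 1 = 6 + 1 = 7$. The flag $\ell_j$ is stabilized by $\res_{t_j}\theta$ by the definition of a parabolic structure in the Dolbeault picture, so in a basis adapted to $\ell_j$ the matrix $\res_{t_j}\theta$ is upper triangular; strong parabolicity forces both diagonal entries to vanish, hence $\res_{t_j}\theta$ is strictly upper triangular and so $(\res_{t_j}\theta)^2 = 0$, which means $\det(\res_{t_j}\theta) = 0$, i.e. the quadratic differential $q = \det\theta$ has at worst a simple pole at each $t_j$ rather than the double pole allowed by $L^{\otimes 2}$. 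Conversely, if $q = \det\theta$ vanishes at each $t_j$ (equivalently, $q$ has at worst a simple pole as a section of $K^{\otimes 2}(2D)$ — but a quadratic differential on $\CP1$ cannot have an honest simple pole, so it must actually be regular there), then the residue at $t_j$ is traceless with vanishing determinant, hence nilpotent, and since the flag $\ell_j$ is $\res_{t_j}\theta$-stable the only possibility is that $\res_{t_j}\theta$ acts trivially on $\ell_j$ and on the quotient — which is exactly strong parabolicity. Imposing the five linear conditions $q(t_j)=0$ on the $7$-dimensional space $H^0(\CP1, L^{\otimes 2})$ leaves the subspace $\mathcal{B}$; it remains to check these five conditions are linearly independent so that $\dim_{\C}\mathcal{B} = 7 - 5 = 2$, which follows because the evaluation-at-$t_j$ functionals on $H^0(\CP1, \O(6))$ (after trivializing $L^{\otimes 2}$) are independent for distinct points, five points imposing independent conditions on degree-$6$ polynomials.

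The only genuinely delicate point is the equivalence ``strongly parabolic $\iff$ $\det\theta \in \mathcal{B}$'' in the direction that requires showing the vanishing condition on $\det\theta$ at $t_j$ is not just necessary but sufficient for strong parabolicity, i.e. that one cannot have $\res_{t_j}\theta$ with zero trace and zero determinant yet non-nilpotent action relative to the given flag. This is immediate over a $2$-dimensional fiber — traceless plus zero determinant forces the Jordan form to be nilpotent — but I should be careful to phrase it using the already-fixed parabolic flag $\ell_j$ and to note the degenerate case where $\res_{t_j}\theta = 0$ identically (then strong parabolicity holds trivially). I expect the main write-up obstacle to be purely notational: keeping straight the three twists $K$, $L = K(D)$, $L^{\otimes 2}$ and the corresponding cohomology dimensions, and stating the residue computation in a basis-independent enough way to apply at all five points simultaneously.
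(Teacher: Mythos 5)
Your proof follows essentially the same route as the paper's: both rest on the observation that strong parabolicity forces $\res_{t_j}\tr\theta = 0$ and $\det(\res_{t_j}\theta) = 0$ at all five points of $D$, and then count linear conditions inside $H^0(\CP1, L) \cong \C^4$ and $H^0(\CP1, L^{\otimes 2}) \cong \C^7$. For part (1) you phrase the conclusion via $\tr\theta \in H^0(\CP1, K)$ and $\deg K = -2 < 0$, while the paper counts the five residue conditions on a four-dimensional space and discards one for redundancy (really the residue theorem for sections of $K(D)$); these are the same argument presented in two different guises, and yours is if anything a bit cleaner. For part (2) the dimension count $7 - 5 = 2$ via independence of evaluation functionals on degree-$6$ forms at five distinct points is exactly the paper's, and you are more explicit than the paper about the $2\times2$ linear-algebra fact (a nilpotent endomorphism stabilizing a line annihilates both the line and the quotient) that underlies the converse direction of the ``if and only if.''

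Two small quibbles. First, the parenthetical ``a quadratic differential on $\CP1$ cannot have an honest simple pole, so it must actually be regular there'' is not correct — there is no obstruction to a meromorphic quadratic differential having a simple pole, and indeed when $t(q)\in D$ the differential $Q$ does have a simple pole at that point — but this aside plays no role in your argument; the condition $q(t_j)=0$ translates correctly into $\det(\res_{t_j}\theta) = 0$, which is all you use. Second, in the converse direction you pass from ``$\det(\res_{t_j}\theta)=0$'' to ``$\res_{t_j}\theta$ traceless with vanishing determinant,'' silently invoking tracelessness of the residue; note that part (1) derives tracelessness from strong parabolicity, so you cannot cite it as a hypothesis when proving the converse. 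The same step is elided in the paper's one-line proof of part (2), and in context it is justified because the relevant Higgs bundles live in $\Mod_{\Dol}(\vec{0},\vec{\alpha})$ where the residue eigenvalues are required to vanish by definition, but it would be worth a sentence making that hypothesis explicit rather than appearing to deduce it.
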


\begin{proof}
We have 
\begin{align*}
   \mbox{tr} (\theta ) & \in H^0 (\CP1 , L) \cong \C^4 \\
   \det (\theta ) & \in H^0 (\CP1 , L^{\otimes 2}) \cong \C^7. 
\end{align*}
The requirement on the eigenvalues of the residues of $\theta$ together imposes $5$ linear  relations on $\mbox{tr} (\theta )$; 
however, one of these conditions expresses that the sum of the eigenvalues is 0, and is therefore redundant. So, $\mbox{tr} (\theta )$ is uniquely determined as $0 \in H^0 (\CP1 , L)$, proving the first assertion.  
On the other hand, the same requirements impose $5$ independent linear relations on $\det (\theta )$. The second assertion follows. 
\end{proof}

From now on, we will often let $\Mod_{\Dol}$ stand for 
$\Mod_{\Dol} (\vec{0}, \vec{\alpha})$. 
It follows from the Proposition that we have a well-defined map 
\begin{align}\label{eq:Hitchin_map}
 H\colon \Mod_{\Dol} & \to \mathcal{B} \\
 (\E, \theta ) & \mapsto - \det (\theta ) \notag
\end{align}
called the \emph{Hitchin map}. (The negative sign will simplify our later formulas.) The target space $\mathcal{B}$ of $h$ is called the \emph{Hitchin base}. 
The generic element of $\mathcal{B}$ will be denoted as 
$$
    q \in H^0 (\CP1 , L^{\otimes 2}). 
$$
We fix the isomorphism $\O (3 ) \cong  L$ given on the affine open subset $w\neq 0$ by 
\begin{equation}\label{eq:isomorphism_L}
   s (z,w) =  \sum_{i=0}^3 s_i z^{3-i} w^i  \mapsto S(z) = s(z,1) 
   \frac{\mbox{d}z}{\prod_{j=0}^4(z-t_j)} .
\end{equation}
Under this isomorphism, the value $s(t_j,1)$ for $0\leq j \leq 4$ 
is equal to the some non-zero multiple (only depending on the divisor $D$ and $j$) 
of the residue $\mbox{res}_{t_j} (S)$. 
The isomorphism~\eqref{eq:isomorphism_L} induces the isomorphism $\O (6 ) \cong  L^{\otimes 2}$ given by 
\begin{equation}\label{eq:isomorphism_L2}
   q(z,w) = \sum_{i=0}^6 q_i z^{6-i} w^i \mapsto Q(z) = q(z,1)   \frac{\mbox{d}z^{\otimes 2}}{\prod_{j=0}^4(z-t_j)^2}.
\end{equation}

\subsection{Spectral curve, Jacobian variety}\label{subsec:spectral_curve}

We consider the total space $\mbox{Tot} ( L )$ of $L$ with the natural projection  
$$
  p_L \colon \mbox{Tot} ( L ) \to \CP1 . 
$$
We denote by $\zeta$ the canonical section of $p_L^* L$. For any 
$$
    q \in H^0 (\CP1 , L^{\otimes 2})
$$
we denote for simplicity $p_L^* q$ by $q$. 

We endow $\mathcal{B}$ with a scalar product (we will be more precise in ~\eqref{eq:norm}), 
pick $R> 0$ and let $S^3_R$ denote the sphere of radius $R$ in $\mathcal{B}\cong \C^2$.
For $q\in S^3_1$ we write $\zeta_{\pm} (Rq,z)$ for the roots of 
$$
  \zeta^2 - R q = 0, 
$$
specifically 
\begin{equation}\label{eq:tilde_zeta}
   \zeta_{\pm} (Rq,z) = \pm \sqrt{R q (z,1)}. 
\end{equation}
We denote by 
\begin{equation}\label{eq:nilpotent_spectral_curve}
   X_{R q} = \{ ([z:w], \pm \sqrt{R q (z,w)} ) \} \subset \mbox{Tot} ( L )
\end{equation}
the Riemann surface of the bivalued function $\zeta_{\pm} (Rq,z)$. 
For a generic choice of $q$ this curve is smooth and of genus 
$$
  g ( X_{R q} ) = 2. 
$$
For generic $q\in S^3_1$, the fiber $H^{-1} (q)$ is smooth, and known to be isomorphic 
to an abelian variety of dimension $2$ over $\C$, namely (a torsor over) the Jacobian 
$\operatorname{Jac}(X_{q})$ of $X_{q}$: 
\begin{equation}\label{eq:generic_Hitchin_fiber}
 H^{-1} (q) \cong \operatorname{Jac}(X_{q}) =  H^{0,1} (X_q) / \Lambda_q
\end{equation}
for the period lattice $\Lambda_q \subset H^{0,1} (X_q) \cong \C^2$ of $X_{R q}$. 
Recall that 
$$
    \Lambda_q = \operatorname{Im} \left(p^{0,1} \circ \iota \right)
$$
where the map 
$$
    \iota \colon  H^1 (X_q, 2 \pi \sqrt{-1} \Z ) \to H^1  (X_q, \C ) 
$$
is induced by the coefficient inclusion $2 \pi \sqrt{-1} \Z \to \C$ and the map 
\begin{equation}\label{eq:projection}
     p^{0,1} \colon H^1  (X_q, \C ) \to H^{0,1} (X_q)
\end{equation}
is projection of harmonic forms to their antiholomorphic part. 
Then, for given $\mu_1, \mu_2 \in H^{0,1} (X_q)$ the relation 
$$
    \mu_1 - \mu_2 \in \Lambda_q
$$
is equivalent to the following condition: for every $1$-cycle $A$ on $X_q$ with
coefficients in $\Z$ we have 
$$
    \int_A ( \mu_1 - \mu_2 ) \in 2 \pi \sqrt{-1} \Z. 
$$

The abelian version of the Hodge correspondence $\psi$ of~\eqref{eq:NAHC} on 
$X_q$ states that any class in $H^{0,1} (X_q)$ may be represented by an 
anti-holomorphic form, i.e. $\mu \in \Omega^{0,1} (X_q)$ 
satisfying $\partial \mu = 0$, and that then the $\U(1)$-connection on the 
trivial line bundle defined by the connection form 
$$
    B = \mu - \bar{\mu} \in \Omega^1 (X_q)
$$ 
is flat, see~\cite[Proposition~4.1.5]{Goldman_Xia}. 
With this notation, fixing any basis $A_1, A_2, B_1, B_2 \in H_1 (X_q, \Z)$, 
the abelian version of $\RH\circ\psi$ (where $\RH$ is the Riemann--Hilbert 
correspondence~\eqref{eq:RH}) is then the diffeomorphism between the Jacobian 
and the $4$-torus given by 
\begin{align*}
    \operatorname{Jac}(X_{q}) & \to T^4 = (S^1)^4 \\
    \mu & \mapsto \left( e^{\int_{A_1} B}, e^{\int_{A_2} B}, 
    e^{\int_{B_1} B}, e^{\int_{B_2} B} \right). 
\end{align*}

\subsection{Ramification of spectral curve}\label{subsec:ramification}

Clearly, $X_{R q}$ is ramified over $D$. 
Let $\tilde{D}$ denote the corresponding branch 
divisor, so $\tilde{D}$ consists of the preimages of the points of $D$ on 
$X_{R q}$, all counted with multiplicity $1$. 
We again let 
\begin{equation}\label{eq:Z_tilde}
  Z_{\pm} (Rq, z)
\end{equation}
stand for the bivalued meromorphic differentials corresponding to~\eqref{eq:tilde_zeta} over the chart $z$ under the isomorphism~\eqref{eq:isomorphism_L}. 
In concrete terms, we have 
\begin{equation}\label{eq:Z_tilde2}
 Z_{\pm} (Rq, z) = \pm \sqrt{R q (z,1)} \frac{\mbox{d}z}{\prod_{j=0}^4(z-t_j)}
\end{equation}
We set
$$
    \Delta_q = \{ z \in \C \colon \quad q(z) = 0 \}. 
$$
Regardless of the value of $R>0$, $\Delta_q$ is the ramification divisor of the projection map 
\begin{equation}\label{eq:projection_X}
   p_{R q} : X_{R q} \to \CP1 
\end{equation}
induced by $p_L$. 
$\Delta_q$ contains the points of $D$ by Proposition~\ref{prop:Hitchin_base}, and is of cardinality $6$ because $\mbox{deg} (L^{\otimes 2}) = 6$. 
It follows that it is of the form 
\begin{equation}\label{eq:discriminant2}
   \Delta_q = \{ t_0 , t_1 , t_2 , t_3 , t_4 , t(q) \} 
\end{equation}
for some $t(q)\in\CP1$. In case $t(q) = t_{j_0}$ for some $0 \leq j_0 \leq 4$, we assign multiplicity $2$ to $t_{j_0}$ in $\Delta_q$. 
On the other hand, for any fixed $t\in\CP1$ we denote by $\Delta_t$ the set of 
$q \in S^3_1$ such that $t \in \Delta_q$. 
We denote by $\tilde{\Delta}_q$ the corresponding ramification points on $X_q$, 
and similarly by $\tilde{t}(q), \tilde{D}$ the lifts of $t(q)$ and of the 
divisor $D$, respectively. 

\begin{prop}\label{prop:Hopf}
 For any fixed $t\in\CP1$, the set $\Delta_t$ is diffeomorphic to $S^1$, and the map 
 \begin{align*}
    t\colon S^3_1 & \to \CP1 \\
    q & \mapsto t(q) 
 \end{align*}
 defined by~\eqref{eq:discriminant2} is the Hopf fibration. 
\end{prop}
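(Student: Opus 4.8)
The plan is to make the Hitchin base and the assignment $q\mapsto t(q)$ completely explicit, and then to recognize the latter as the canonical projectivization $\C^2\setminus\{0\}\to\CP1$. First I would put coordinates on $\mathcal{B}$. By Proposition~\ref{prop:Hitchin_base}(2) together with the isomorphism~\eqref{eq:isomorphism_L2}, an element $q\in\mathcal{B}$ is exactly a polynomial $q(z,1)$ of degree at most $6$ vanishing at the five distinct points $t_0,\dots,t_4$; such a polynomial is divisible by $\prod_{j=0}^4(z-t_j)$, so there are unique $a,b\in\C$ with
$$
  q(z,1) = (az+b)\prod_{j=0}^4(z-t_j).
$$
This yields a $\C$-linear isomorphism $\mathcal{B}\xrightarrow{\sim}\C^2$, $q\leftrightarrow(a,b)$, under which $S^3_1$ becomes the unit sphere of $\C^2$ for the scalar product transported from~\eqref{eq:norm}.

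Next I would read off $t(q)$ from this normal form. The zero divisor of the section $q$ of $L^{\otimes 2}$ has degree $\deg(L^{\otimes 2})=6$; on the chart $w\neq 0$ the factorization above shows it consists of $t_0,\dots,t_4$ together with the extra root $z=-b/a$ when $a\neq 0$, while if $a=0$ (and then $b\neq 0$, since $q\neq 0$) the polynomial $q(z,1)$ drops to degree $5$ and the sixth zero of $q$ moves to $w=0$. Comparing with~\eqref{eq:discriminant2}, in every case
$$
  t(q) = [-b:a]\in\CP1,
$$
with the convention $[-b:0]=[1:0]$. Equivalently, for $t=[z_0:1]\in\C\subset\CP1$ one has $t\in\Delta_q$ iff $az_0+b=0$, and $[1:0]\in\Delta_q$ iff $a=0$; either way, $t\in\Delta_q$ is the condition that $q=(a,b)$ lie on a fixed one-dimensional complex subspace $\ell_t\subset\C^2$, and as $t$ runs over $\CP1$ the lines $\ell_t$ run over all such subspaces.

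Consequently $\Delta_t=\ell_t\cap S^3_1$ is the unit sphere of the two-real-dimensional subspace $\ell_t$, hence diffeomorphic to $S^1$, which proves the first claim. For the second, the map $(a,b)\mapsto[-b:a]$ is the canonical projection $\C^2\setminus\{0\}\to\CP1$ post-composed with the automorphism of $\CP1$ that interchanges the two homogeneous coordinates (with a sign); the restriction of the canonical projection to $S^3_1$ is the Hopf fibration — literally so when the scalar product~\eqref{eq:norm} is Hermitian, and in general up to the radial diffeomorphism identifying $S^3_1$ with the standard $S^3$, which is compatible with the projection because the latter is invariant under positive rescaling. Hence $q\mapsto t(q)$ restricted to $S^3_1$ is the Hopf fibration, up to diffeomorphisms of source and target.

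This argument is essentially bookkeeping, and I do not expect a serious obstacle; the only points requiring attention are the behaviour at $t=[1:0]$, handled above via the degree drop of $q(z,1)$, and the independence of the statement from the particular choice of scalar product~\eqref{eq:norm}, which holds because radial projection identifies the associated unit sphere with the standard $S^3$ compatibly with the complex projectivization map.
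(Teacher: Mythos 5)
Your proof is correct and follows essentially the same route as the paper: factor $q$ as a linear form times the fixed quintic $\prod_j(z-t_j w)$, identify $\mathcal{B}\cong\C^2$ via the coefficients $(a,b)$ of that linear form, observe that $\Delta_t$ is the unit sphere of a complex line (hence $S^1$), and recognize $q\mapsto t(q)$ as the canonical projection $\C^2\setminus\{0\}\to\CP1$. Your treatment of the degenerate case $a=0$ and the remark about independence from the choice of Hermitian norm are small clarifications the paper leaves implicit, but the argument is the same.
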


\begin{proof}
 The section $q$ is a homogeneous polynomial of degree $6$, vanishing at the points of $D$ by Proposition~\ref{prop:Hitchin_base}, hence is of the form 
 $$
  q (z,w) = (a z - bw) \prod_{j=0}^4 (z - t_jw)  
 $$
 for some $(a,b)\in\C^2\setminus \{ (0,0) \}$. Using the isomorphism~\eqref{eq:isomorphism_L2}, the corresponding meromorphic quadratic differential reads as 
 \begin{equation}\label{eq:Hopf}
    Q (z) 
    = \frac{(a z - b) \mbox{d} z^{\otimes 2}}{\prod_{j=0}^4 (z - t_j)}. 
 \end{equation}
 The coefficients $(a,b)$ describe natural co-ordinates of the space $\mathcal{B} \cong \C^2$. 
 For any fixed $[z_0 : w_0]\in\CP1$ the condition $a z_0  - b w_0 = 0$ is linear in $(a,b)$, hence $\Delta_t$ is the link of a line passing through $0$ in $\mathcal{B}$. 
 This shows the first assertion. 
 
 The map appearing in the second assertion is 
 $$
  (a,b) \mapsto [b:a]. 
 $$
 This is just the canonical map from $\C^2\setminus \{ (0,0) \}$ to $\CP1$. The result follows. 
\end{proof}

 Using the factorization~\eqref{eq:Hopf} we will assume that the norm of $\mathcal{B}$ 
 is 
 \begin{equation}\label{eq:norm}
    | q | = \sqrt{|a|^2 + |b|^2}.
 \end{equation}
 We use standard Hopf co-ordinates 
 \begin{equation}\label{eq:Hopf_coordinates}
    a = \cos (\theta ) e^{\sqrt{-1} (\varphi - \phi)}, \quad b = \sin (\theta ) e^{\sqrt{-1} ( \varphi + \phi)} 
 \end{equation}
 with $\theta \in [0,\frac{\pi}2]$ and $\varphi \in [0, 2\pi ]$ and $\phi \in [0, \pi ]$. Then, on the chart $\mbox{Spec} ( \C [z] )$ the map $t$ reads as 
 $$
  t\colon (a,b) \mapsto \frac ba = \tan (\theta ) e^{2\sqrt{-1} \phi}. 
 $$
 The parameter of the Hopf circles is $\varphi$. 

\subsection{Perverse Leray filtration}\label{subsec:perverse_filtration}

Consider a general quasi-projective variety $Y$ and denote by $D^b (Y, \Q)$ the derived 
category of bounded complexes of $\Q$-vector spaces $K$ on $Y$ with constructible cohomology sheaves of finite rank. 
Beilinson, Bernstein and Deligne~\cite{BBD} defined truncation functors 
 $$
  ^{\p}\!\tau_{\leq i} :  D^b (Y, \Q) \to  ^{\p}\!\!D^{\leq i} (Y, \Q)
 $$
 encoding the support condition for the middle perversity function, giving rise to a system of truncations 
 $$
  0 \to \cdots \to \, ^{\p}\!\tau_{\leq -p} K \to \, ^{\p} \!\tau_{\leq -p+1} K \to \cdots \to K .
 $$
 This gives rise to the \emph{perverse filtration} 
 $$
    P^p H(Y , K ) = \operatorname{Im} (H(Y, ^{\p}\!\tau_{\leq -p} K) \to 
    H(Y, K )). 
 $$
 
 We will apply the above results to the following setup. 
 Consider the right derived direct image functor 
 $$
    \mathbf{R} H_* \colon D^b (\Mod_{\Dol}, \Q) \to  D^b (\mathcal{B} , \Q)
 $$
 and denote by $R^l H_*$ the $l$'th right derived direct image sheaf. 
Let $\H$ denote hypercohomology of a complex of sheaves and $H$ stand for cohomology of a single sheaf. 
(We hope that the two different usages of the symbol $H$ for the Hitchin map and 
for cohomology groups will not lead to confusion.) 
Let $\underline{\Q}_{\Mod_{\Dol}}$ denote the constant sheaf with fibers $\Q$ on 
$\Mod_{\Dol}$. 
With these notations, we will be interested in the perverse filtration on 
$K = \mathbf{R} H_* \underline{\Q}_{\Mod_{\Dol}}$ over $Y = \mathcal{B}$. 
We then have 
$$
    \H^n (\mathcal{B}, \mathbf{R} H_* \underline{\Q}_{\Mod_{\Dol}} ) \cong 
    H^n (\Mod_{\Dol}, \Q ). 
$$

We will make use of a geometric characterization of the perverse filtration provided 
by M.~de~Cataldo and L.~Migliorini in~\cite[Theorem~4.1.1]{dCM} in terms of the 
flag filtration $F$. 
Namely, let 
\begin{equation}
 Y_{-2}\subset Y_{-1} \subset Y = \mathcal{B}
\end{equation}
be a generic full affine flag in $\mathcal{B}$, namely $Y_{-1}$ a generic line 
and $Y_{-2}$ a generic point within $Y_{-1}$. We then have the equality 
\begin{align*}
  P^p \H^n (Y, \mathbf{R} H_* \underline{\Q}_{\Mod_{\Dol}} ) & = 
  F^{p+n} \H^n (Y, \mathbf{R} H_* \underline{\Q}_{\Mod_{\Dol}} )\\
    & = \Ker (\H^n (Y, \mathbf{R} H_* \underline{\Q}_{\Mod} ) \to \H^n (Y_{p+n-1}, \mathbf{R} H_* \underline{\Q}_{\Mod} |_{Y_{p+n-1}} )), 
\end{align*}
where $F^{\bullet}$ stands to denote the flag filtration. 
It follows immediately from $Y_{-3} = \varnothing$  that $P^{1-n}\H^n = 0$ 
and $P^{-n-2}\H^n = \H^n$, so the only possibly non-trivial graded pieces 
live in degrees $-n-2,-n-1,-n$. Notice that for $p = -1-n$ we get 
$$
    \H^* (Y_{-2}, \mathbf{R} H_* \underline{\Q}_{\Mod} |_{Y_{-2}} ) 
    \cong H^* (H^{-1} (Y_{-2}), \Q ) 
    \cong \Lambda^* H^1 (H^{-1} (Y_{-2}), \Q), 
$$
the exterior algebra over $H^1 (T^4, \Q) \cong \Q^4$. 
Moreover, by the isomorphism theorem 
$$
    \Gr_P^{-n-2} H^n (\Mod_{\Dol}, \Q ) \cong 
    \operatorname{Im} ( H^n (\Mod_{\Dol}, \Q ) \to H^n (H^{-1} (Y_{-2}), \Q) ). 
$$

\section{Large scale behaviour of solutions of Hitchin's equations}

\subsection{Asymptotic abelianization, limiting configuration}

We fix a generic element $q \in S^3_1 $ and consider 
$(\E , \theta) \in \Mod_{\Dol} (\vec{0}, \vec{\alpha})$ such that 
\begin{equation}\label{eq:base_point}
   H (\E , \theta) =  q. 
\end{equation}
As we have explained in Section~\ref{subsec:spectral_curve}, choosing 
such a Higgs bundle $(\E , \theta)$ amounts to fixing a point in an abelian variety 
of complex dimension $2$. 
Then, for any $t \in \C^{\times}$ we have $(\E , t \theta) \in \Mod_{\Dol} (\vec{0}, \vec{\alpha})$, 
i.e. $\C^{\times}$ acts on $\Mod_{\Dol} (\vec{0}, \vec{\alpha})$. 
Obviously, 
$$
  H (\E , t \theta) = t^2 q \in S^3_{|t|^2}. 
$$
For any fixed value of $t$, there exists a unique solution $h_{t}$ of the real Hitchin's equation~\eqref{eq:real_Hitchin} associated to the pair $(\E , t \theta)$. 
We will summarize some results of~\cite{FMSW} (partly based on~\cite{MSWW} and~\cite{Moc})  regarding the asymptotic behaviour of the tame harmonic bundle associated to 
$(\E , \sqrt{R} \theta )$ (the parameter $t>0$ of~\cite{FMSW} thus being replaced by 
$\sqrt{R}$ with $R > 0$). 
The analysis in~\cite{FMSW} relies on the assumption that $\theta$ is generically regular semisimple. This holds for generic $q \in S^3_1 $. 
Indeed, if $\theta$ is not generically regular semisimple then the 
curve~\eqref{eq:nilpotent_spectral_curve} is a section $s$ of $p_L$ with multiplicity 
$2$, which is clearly not the case generically. 
 
Let $\L_{\E } \in \operatorname{Jac}(X_{q})$ be the line bundle such that 
\begin{equation}\label{eq:E_direct_image_L}
    \E = p_{q*} \L_{\E } 
\end{equation} 
(see~\eqref{eq:projection_X}) and for any $R>0$ let us denote by 
$$
  \rho \colon X_{R q} \to X_{R q}
$$
the involution exchanging $Z_+(R q, z)$ and $Z_-(R q, z)$ (see~\eqref{eq:Z_tilde}). 
As $\rho$ is the restriction to $X_{R q}$ of an algebraic involution defined 
over all $\mbox{Tot} ( L )$, we will omit $R q$ from its notation. 
Then, there exists a short exact sequence of sheaves on $X_{R q}$ 
\begin{equation}\label{SES:direct_sum_decomposition}
  0 \to p_{R q}^* \E \to \L_{\E } \oplus \rho^* \L_{\E } \to 
  \O_{\Delta_q} \to 0 . 
\end{equation}
Notice that for any $R>0$ there is an isomorphism 
$$
    X_{R q} \cong X_q
$$
commuting with $p_L$; we deduce that the restriction of the Hitchin map $H$ 
to the $\R^+$-orbit of $q$ is canonically isomorphic to a product 
$$
    \R^+ \times H^{-1} (q). 
$$
Therefore, in the sequel we will often identify $H^{-1} (R q)$ and $H^{-1} (q)$.
 
Let $\tilde{t}(q)\in X_{R q}$ be the preimage of $t(q)$ under $p_{R q}$. 
\begin{prop}
 Formulas~\eqref{eq:Z_tilde2} define univalued holomorphic differentials on $X_{R q}$, 
 vanishing to order $2$ at $\tilde{t}(q)$. 
\end{prop}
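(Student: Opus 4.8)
The plan is to compute directly with the formula~\eqref{eq:Z_tilde2} and check two things: that the putative differential extends holomorphically across the ramification locus of $p_{Rq}$, and that it vanishes to order exactly $2$ at the point $\tilde t(q)$ lying over the ``extra'' branch point $t(q)$. Throughout I would work on the chart $\operatorname{Spec}(\C[z])$ and its spectral lift, using that by Proposition~\ref{prop:Hopf} (see~\eqref{eq:Hopf}) we may write
\begin{equation*}
   q(z,1) = (az-b) \prod_{j=0}^4 (z-t_j), \qquad Q(z) = \frac{(az-b)\, \d z^{\otimes 2}}{\prod_{j=0}^4 (z-t_j)},
\end{equation*}
so that, as a $1$-form downstairs but only bivalued,
\begin{equation*}
   Z_{\pm}(Rq,z) = \pm \sqrt{R}\,\frac{\sqrt{(az-b)\prod_j (z-t_j)}}{\prod_j (z-t_j)}\, \d z = \pm \sqrt{R}\,\frac{\sqrt{az-b}}{\sqrt{\prod_j (z-t_j)}}\, \d z.
\end{equation*}

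First I would pass to a local uniformizer on $X_{Rq}$ near a point of $\tilde D$, say over $z=t_{j_0}$: since the covering is simply ramified there, a local coordinate is $u$ with $u^2 = z-t_{j_0}$, hence $\d z = 2u\, \d u$. Substituting, the factor $\sqrt{\prod_j (z-t_j)}$ contributes $\sqrt{z-t_{j_0}} = u$ (times a local unit, as the other $z-t_j$ are nonvanishing units there), so $Z_{\pm} = (\text{unit})\cdot u^{-1}\cdot 2u\, \d u = (\text{unit})\, \d u$; thus the differential is holomorphic and nonvanishing at the points of $\tilde D$. This is exactly the cancellation that makes~\eqref{eq:Z_tilde2} univalued: the square-root branching of the numerator along $D$ is matched by the branching of the spectral cover itself. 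One should also note there is genuinely no monodromy obstruction: $p_{Rq}$ is the double cover branched precisely at $\Delta_q \supset D$, and the defining equation $\zeta^2 = Rq$ shows $\zeta = p_{Rq}^* Z_{\pm}/(\text{pullback of }\d z/\prod(z-t_j))$ is a global section of $p_{Rq}^* L$ on $X_{Rq}$, so~\eqref{eq:Z_tilde2} does define a single-valued meromorphic $1$-form; the previous local computation shows it has no poles along $\tilde D$, and away from $\Delta_q$ it is manifestly holomorphic, so it is a global holomorphic $1$-form on $X_{Rq}$.

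Next I would examine the remaining ramification point $\tilde t(q)$ over $t(q)$, where by~\eqref{eq:discriminant2} the polynomial $az-b$ vanishes simply (for generic $q$, so that $t(q)\notin D$ and $X_{Rq}$ is smooth of genus $2$). Taking a local coordinate $v$ with $v^2 = z - t(q)$, so $az - b = a(z-t(q)) = a v^2$ up to a unit and $\d z = 2v\, \d v$, the numerator $\sqrt{az-b}$ becomes $(\text{unit})\cdot v$ while $\sqrt{\prod_j(z-t_j)}$ is now a local unit (none of the $t_j$ equals $t(q)$). Hence $Z_{\pm} = (\text{unit})\cdot v \cdot 2v\, \d v = (\text{unit})\, v^2\, \d v$, which vanishes to order exactly $2$ at $\tilde t(q)$, as claimed. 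As a consistency check one can count zeros: a holomorphic $1$-form on a genus-$2$ curve has $2g-2 = 2$ zeros counted with multiplicity, matching the single double zero at $\tilde t(q)$ and no others. The only real subtlety — and the step I would treat most carefully — is the genericity hypothesis ensuring $t(q)\notin D$ and $X_{Rq}$ smooth; in the non-generic coincidence $t(q)=t_{j_0}$ one gets instead a point of multiplicity behaviour governed by $u^4 = z-t_{j_0}$ and the local analysis changes, but under the standing genericity assumption on $q\in S^3_1$ this case is excluded, and the computation above is complete.
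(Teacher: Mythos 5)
Your proof is correct and follows essentially the same route as the paper: in both cases one passes to a local uniformizer at each branch point (you use $u$ with $u^2 = z-t_j$, the paper uses $\zeta_i$ itself, which differs from $u$ only by a local unit), and the claimed holomorphicity at $\tilde D$ and the double zero at $\tilde t(q)$ fall out of the same cancellation of square roots. The genus count and the remark on the non-generic case $t(q)\in D$ are welcome additions but do not change the argument.
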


\begin{proof}
The fact that the functions are univalued is clear, as $X_{R q}$ is by definition their Riemann surface. 

For simplicity, let us work on the chart $z$ of $\CP1$ and set $[z:1] = z$, a similar analysis works over the chart $w$. 
Furthermore, in this proof we use the notation 
$$
  \zeta_i  = \zeta_i (R q, z)
$$
with $i \in \{ \pm \}$. 
A holomorphic chart of $X_{R q}$ near $t_j$ is given by $\zeta_i $, with local equation 
$$
  \zeta_i ^2 = R (z-t_j ) h_j(z)
$$
for some holomorphic function $h_j$ (depending on $q$) such that $h_j(t_j) \neq 0$. This shows that 
$$
  2 \zeta_i \mbox{d} \zeta_i  = R \mbox{d} z h_j(z) + R (z-t_j ) \mbox{d} h_j .
$$
We derive that the $1$-form $\omega$ defined by 
\begin{align*}
   \omega = \frac{\mbox{d} z}{\zeta_i } & = \frac 1{h_j} \left( \frac{2\mbox{d} \zeta_i }R - \frac{z - t_j}{\zeta_i } \mbox{d} h_j \right) \\
   & = \frac 1{h_j} \left( \frac{\mbox{d} \zeta_i }R - \zeta_i \frac{\mbox{d} h_j }{h_j} \right)
\end{align*}
is holomorphic in $\zeta_i$. The formula shows that $\omega$ is holomorphic near $z = t (q)$ too. 
At any point away from the ramification divisor $\Delta_q$ the form $\omega$ is obviously regular. 

Now, by~\eqref{eq:Z_tilde2} we have 
\begin{align*}
  Z_i  & = \pm \sqrt{R} \sqrt{a z - b} \frac{\mbox{d} z}{\prod_{j=0}^4 \sqrt{z-t_j }} \\
  & =  \sqrt{R} (a z - b) \frac{\mbox{d} z}{\zeta_i } =  \pm \sqrt{R} (a z - b) \omega, 
\end{align*}
where the root of the polynomial $a z - b$ is $t (q)$. The first assertion immediately follows. 
For the second assertion, it is sufficient to notice that near $z = t (q)$ we have 
$$
  a z - b = \zeta_i^2 h (\zeta_i )
$$
for some non-vanishing holomorphic function $h$. This finishes the proof. 
\end{proof}

Fix some  $q \in S^3_1$ and consider a Higgs bundle $(\E, \theta )$
satisfying~\eqref{eq:base_point}, and recall the notation~\eqref{eq:Z_tilde2}. 
Let $\L_{\E }$ be the line bundle satisfying~\eqref{eq:E_direct_image_L}. 
By abelian Hodge theory, there exists (up to multiplication by a constant) a unique 
Hermitian metric $h_{\det (\E )}$ on $\det (\E )$ over $\CP1$ satisfying: 
\begin{itemize}
 \item the associated unitary connection $\nabla_{h_{\det (\E )}}$ on $\det (\E )$ 
 is flat (i.e.,$h_{\det (\E )}$ is Hermitian--Einstein), 
 \item for some local holomorphic trivialization $\vec{e}_1 \wedge \vec{e}_2$ 
 of $\det (\E )$ at $t_j$ we have 
 $$
    \lim_{z\to t_j} |z-t_j|^{-1} |\vec{e}_1 \wedge \vec{e}_2 |_{h_{\det (\E )}} = 1
 $$
 for every $0 \leq j \leq 4$. 
\end{itemize}
Notice that the last condition is imposed by the choice of parabolic 
weights~\eqref{eq:Dolbeault_weights}. 

Moreover, there exists (up to a scalar) a unique abelian Hermitian metric $h_{\L_{\E }}$ on 
$\L_{\E }$ over $X_q$ with parabolic points at $\tilde{\Delta}_q$ such that 
\begin{itemize}
 \item the associated unitary connection $\nabla_{h_{\L_{\E }}}$ on $\L_{\E }$ is flat (i.e.,
 $h_{\L_{\E }}$ is Hermitian--Einstein), 
 \item we have $h_{\L_{\E }} \otimes \rho^* h_{\L_{\E }} = p_q^* h_{\det \E}$ 
 over $\CP1 \setminus \Delta_q$ (see~\eqref{SES:direct_sum_decomposition}), 
 \item for some trivialization $\vec{l}$ of $\L_{\E }$ at each point and some local chart 
 $\zeta$ of $X_{R q}$ centered at $\tilde{t}_j \in \tilde{D}$ we have 
 $$
    \lim_{\zeta \to 0} |\zeta |^{-1} | \vec{l} |_{h_{\L_{\E }}}^2 = 1, 
 $$
 \item for some trivialization $\vec{l}$ of $\L_{\E }$ at each point and some local chart 
 $\zeta$ of $X_{R q}$ centered at $\tilde{t}(q)$ we have 
 $$
    \lim_{\zeta \to 0} |\zeta | | \vec{l} |_{h_{\L_{\E }}}^2 = 1, 
 $$
\end{itemize}
Let $h_{\E,\infty}$ be the orthogonal push-forward of $h_{\L_{\E }}$ by $p_q$ over 
$\CP1 \setminus \Delta_q$, so that in the direct sum 
decomposition~\eqref{SES:direct_sum_decomposition} the  summands 
$\L_{\E}, \rho^* \L_{\E}$ are orthogonal to each other and the restrictions 
of $h_{\E,\infty}$ to these summands are respectively $h_{\L_{\E }}, \rho^* h_{\L_{\E }}$. 
Let $\nabla_{h_{\E,\infty}}$ be the flat $\U(1)\times \U(1)$-connection 
on $\E$ associated to $h_{\E,\infty}$ over $\CP1 \setminus \Delta_q$. 
Over any simply connected subset of $\CP1 \setminus \Delta_q$, let $p_{q,\ast }$ stand 
for the inverse of $p_q^*$ on either branch of $X_q$. Let 
 $$
    B_{\det (\E )} \in \Omega^1 (\CP1 \setminus D , \C ), \qquad 
    \frac 12 p_q^* B_{\det (\E )} + B_{\L_{\E }}  
    \in \Omega^1 ( X_q \setminus \tilde{\Delta}_q ,\C ) 
 $$ 
stand for the connection forms of the flat abelian $\U (1)$-connections 
$\nabla_{h_{\det (\E)}}, \nabla_{h_{\L_{\E }}}$ with respect to some smooth unitary frames. The action of $\rho$ on the second connection form is given by 
$$
    \frac 12 p_q^* B_{\det (\E )} + B_{\L_{\E }} \mapsto 
    \frac 12 p_q^* B_{\det (\E )} - B_{\L_{\E }}. 
$$
By the above properties, the connection form of $\nabla_{h_{\E,\infty}}$ with respect 
to a smooth unitary trivialization of $V$ compatibe with the 
decomposition~\eqref{SES:direct_sum_decomposition} reads as 
 \begin{equation}\label{eq:limiting_unitary_connection_form}
   \begin{pmatrix} 
    \frac 12 B_{\det (\E )} + p_{q,\ast } B_{\L_{\E }} & 0 \\
    0 & \frac 12 B_{\det (\E )} - p_{q,\ast } B_{\L_{\E }} 
   \end{pmatrix}.
 \end{equation}
 Moreover, if one denotes by 
 $\mu_{\det (\E )}, \frac 12 \mu_{\det (\E )} + \mu_{\L_{\E }}$ the $(0,1)$-forms of the 
 $\bar{\partial}$-operators of the corresponding line bundles with respect 
 to smooth unitary frames, then we have 
 \begin{align}\label{eq:limiting_unitary_connection_coefficients1}
    B_{\det (\E )} & = \mu_{\det (\E )} - \bar{\mu}_{\det (\E )},  \\
    B_{\L_{\E }} & = \mu_{\L_{\E }} - \bar{\mu}_{\L_{\E }}.
    \label{eq:limiting_unitary_connection_coefficients2}
 \end{align}
 We then obviously have $p^{0,1} B_{\det (\E )} = \mu_{\det (\E )}$ and 
 $p^{0,1} B_{\L_{\E }} = \mu_{\L_{\E }}$, see~\eqref{eq:projection}. 
We call $(\E, \theta, h_{\E,\infty})$ the {\it limiting configuration} associated to 
$(\E, \theta )$.  
 We introduce the model integrable connection 
 \begin{equation}\label{eq:semisimple_model_flat_connection}
  \nabla_{\sqrt{R}}^{\model} = \nabla_{h_{\E,\infty}} + 
                      \begin{pmatrix}
                       2 \Re Z_+ (R q, z) & 0 \\
                       0 & 2 \Re Z_- (R q , z)
                      \end{pmatrix} 
 \end{equation}
 with respect to the same trivializations as above. 
 On the other hand, we denote by $h_{\sqrt{R}}$ the solution 
 of~\eqref{eq:real_Hitchin} and $\nabla_{\sqrt{R}}$ the Hermitian--Einstein metric 
 and integrable connection associated to $(\E, \sqrt{R} \theta )$. 
 
\begin{theorem}[T.~Mochizuki]~\cite[Corollary~2.13]{Moc}\label{thm:Moc}
 Over any simply connected compact set $K \subset \C \setminus \Delta_q $ there exists a gauge transformation $g_{\sqrt{R}}$ such that 
 $$
  g_{\sqrt{R}} \cdot \nabla_{\sqrt{R}} - \nabla_{\sqrt{R}}^{\model} \to 0
 $$
 (measured with respect to $h_{\sqrt{R}}$) as $R \to \infty$, uniformly over $K$. 
 More precisely, there exist $c_2,C_2>0$ (depending on $K, q$) such that for 
 any $z\in K$ we have 
 $$
  |g_{\sqrt{R}} \cdot \nabla_{\sqrt{R}} (z) - \nabla_{\sqrt{R}}^{\model} (z)|_h
  <  C_2 e^{-c_2 \sqrt{R}} .
 $$
\end{theorem}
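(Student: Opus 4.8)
The plan is to follow the WKB / gluing strategy of Mazzeo--Swoboda--Weiss--Witt and Mochizuki: build an explicit approximate solution of Hitchin's equation out of the limiting configuration $(\E,\theta,h_{\E,\infty})$ together with local fiducial models near the ramification divisor, and then perturb it to the genuine solution, controlling the error by an a priori estimate that carries the large parameter $R$. First I would write the sought Hermitian--Einstein metric as $h_{\sqrt R}=h_{\E,\infty}\,e^{\psi_R}$ with $\psi_R$ self-adjoint with respect to $h_{\E,\infty}$. Over $\C\setminus\Delta_q$ the Higgs field $\theta$ decomposes along the two holomorphic eigenlines pulled back from $X_q$, on which $h_{\E,\infty}$ is diagonal, so $\psi_R$ is essentially diagonal and boils down to a single real function $u_R$. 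Substituting into~\eqref{eq:real_Hitchin} and using that $(\E,\theta,h_{\E,\infty})$ is by construction an \emph{exact} solution of the \emph{decoupled} equations $F_{h_{\E,\infty}}=0$, $[\theta,\theta^{\dagger}_{h_{\E,\infty}}]=0$ on $\C\setminus\Delta_q$ reduces the problem to a sinh-Gordon-type elliptic equation of the shape $\Delta u_R = R\,f(z)\sinh(2u_R)+(\text{lower order})$ for the Laplace operator $\Delta$, where in a suitable local conformal gauge $f=|q|$, so $f>0$ precisely away from $\Delta_q$ and the error of the guess $u_R\equiv 0$ is entirely concentrated in the nonlinear term with its large coefficient $R$.

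Because $f$ vanishes at the six points of $\Delta_q$, the naive guess $u_R\equiv 0$ is not a uniformly good global approximant, so on small disks around each point of $\Delta_q$ I would replace it by the appropriate rotationally symmetric fiducial solution: for the simple zero near $t(q)$ the standard Painlev\'e~III model, and near each parabolic point $t_j\in D$ the model solution adapted to the parabolic weights~\eqref{eq:Dolbeault_weights}. Interpolating across fixed annuli produces an approximate solution $u_R^{\app}$ which, crucially, is identically $0$ on any fixed simply connected compact $K\subset\C\setminus\Delta_q$ once $R$ is large, and whose Hitchin-equation error is supported in the gluing annuli and is $O(e^{-c\sqrt R})$ there. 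The next step is to linearize~\eqref{eq:real_Hitchin} at $u_R^{\app}$; the linearization is a Schr\"odinger-type operator $\Delta + R\,f\cosh(2u_R^{\app})$ whose potential is $\gtrsim R$ away from $\Delta_q$, so on suitable weighted H\"older (or Sobolev) spaces accommodating the local models near $\Delta_q$ its inverse exists with operator norm $O(R^{-1/2})$ uniformly in $R$. A contraction-mapping argument then yields the true solution $u_R=u_R^{\app}+v_R$ with $\|v_R\|=O(e^{-c\sqrt R})$, hence $\|u_R\|_{C^1(K)}=O(e^{-c\sqrt R})$; the decay exponent is governed by the minimal value on $K$ of $\Re\int Z_{\pm}$ along paths to $\Delta_q$, which is bounded below exactly because $K$ avoids $\Delta_q$.

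It remains to pass from metrics to connections. In a smooth unitary frame of $V$ adapted to the eigenline decomposition of $\theta$ and to $h_{\E,\infty}$, the limiting connection $\nabla_{h_{\E,\infty}}+\sqrt R(\theta+\theta^{\dagger}_{h_{\E,\infty}})$ is precisely $\nabla_{\sqrt R}^{\model}$ of~\eqref{eq:semisimple_model_flat_connection}, whose off-diagonal Higgs part vanishes identically and whose diagonal is $2\Re Z_{\pm}(Rq,z)=2\sqrt R\,\Re Z_{\pm}(q,z)$. Writing $g_{\sqrt R}$ for the gauge transformation implementing $h_{\E,\infty}^{1/2}h_{\sqrt R}^{-1/2}$ (so that $g_{\sqrt R}=\exp(-\tfrac12\psi_R)$ in this frame) and using $\nabla_{\sqrt R}=\nabla_{h_{\sqrt R}}+\sqrt R(\theta+\theta^{\dagger}_{h_{\sqrt R}})$, the difference $g_{\sqrt R}\cdot\nabla_{\sqrt R}-\nabla_{\sqrt R}^{\model}$ decomposes into: the Chern-connection difference $\nabla_{h_{\sqrt R}}-\nabla_{h_{\E,\infty}}$, which is $O(\d u_R)=O(e^{-c\sqrt R})$; the change of $\theta^{\dagger}$ under $h_{\E,\infty}\mapsto h_{\sqrt R}$, bounded by $\sqrt R\,|\theta|\,O(u_R)=O(\sqrt R\,e^{-c\sqrt R})$; and the conjugation terms $[\psi_R,\cdot]$, again $O(u_R)$. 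Since $K$ is compact and disjoint from $D\cup\{t(q)\}$, absorbing $\sqrt R$ into a slightly smaller constant $c_2<c$ yields $|g_{\sqrt R}\cdot\nabla_{\sqrt R}(z)-\nabla_{\sqrt R}^{\model}(z)|_{h}<C_2 e^{-c_2\sqrt R}$ for all $z\in K$, as claimed.

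The main obstacle I expect is the uniform-in-$R$ invertibility of the linearized operator with the sharp $O(R^{-1/2})$ bound: this forces one to choose exactly the right weighted function spaces near $\Delta_q$ --- and the parabolic points $t_j\in D$ are more delicate than the simple zero $t(q)$ --- and, simultaneously, to establish a genuinely global a priori estimate (preventing bubbling or loss of energy as $R\to\infty$) so that the local comparison actually controls the global solution. In other words, even though the conclusion only concerns a compact set $K$ bounded away from the ramification divisor, one cannot avoid analyzing the solution in a full neighbourhood of every point of $\Delta_q$; the precise fiducial models and their matching are where most of the work sits, and this is the content of the cited results of Mochizuki (and of~\cite{FMSW},~\cite{MSWW}).
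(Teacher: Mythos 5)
The paper does not prove Theorem~\ref{thm:Moc}: it is quoted verbatim as~\cite[Corollary~2.13]{Moc}, and the text relies on the citation rather than on an argument. So there is no ``paper's own proof'' against which to match yours; what you have written is a reconstruction, and it should be assessed as such.

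Your sketch is a plausible route, but it is \emph{not} Mochizuki's. What you describe is the gluing strategy of~\cite{MSWW} and its parabolic extension~\cite{FMSW}: reduce to a scalar $\cosh$--Gordon equation away from $\Delta_q$, patch in fiducial Painlev\'e~III-type models across small disks around $\Delta_q$, show the linearization has a uniformly bounded inverse on suitable weighted spaces with norm $O(R^{-1/2})$, and close by a contraction argument. That is exactly how Theorem~\ref{thm:MSWW6.7} of the present paper is obtained, and it does give, as a corollary, the convergence stated in Theorem~\ref{thm:Moc}. Mochizuki's own proof of Corollary~2.13 of~\cite{Moc} takes a different path: instead of building an approximate metric and perturbing it, he works directly with the actual harmonic metric and estimates the second fundamental form of the $\theta$-eigenline decomposition by subharmonicity, monotonicity and comparison arguments carried out along flow lines of $\Re Z_\pm$; the exponential rate in $\sqrt R$ then comes out of those pointwise decay estimates rather than from a global Banach-space inversion. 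Both approaches yield the same conclusion; the gluing method has the advantage of producing a precise approximate solution globally (which this paper also needs, in the proofs involving the fiducial frames), while Mochizuki's is lighter in that no delicate uniform invertibility estimate is required and the ``preventing bubbling as $R\to\infty$'' concern you flag at the end simply does not arise.

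A couple of points in your sketch deserve tightening if you wanted to make it rigorous.  First, you reduce $\psi_R$ to a single scalar $u_R$ on the grounds that $h_{\sqrt R}$ is ``essentially diagonal'' in the $h_{\E,\infty}$-decomposition; in fact the off-diagonal part of $\psi_R$ is \emph{not} identically zero and is itself only exponentially small, and one has to track it, because it is precisely what produces the off-diagonal error in $g_{\sqrt R}\cdot\nabla_{\sqrt R}-\nabla^{\model}_{\sqrt R}$. Second, when you estimate the ``change of $\theta^\dagger$'' term by $\sqrt R\,|\theta|\,O(u_R)$, you need the uniform $\mathcal{C}^1$ control of $u_R$ on $K$, not just $\mathcal{C}^0$, because $\theta^{\dagger,h}$ involves $h^{-1}\bar\theta^{\mathrm t}h$ and the gauge $g_{\sqrt R}$ enters through its derivative; you do allude to $\|u_R\|_{\mathcal{C}^1(K)}$, but the H\"older space in the contraction argument must be chosen so that this is what comes out. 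Neither objection breaks the strategy, but both are places where the cited literature does real work.
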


 \subsection{Fiducial solution, approximate solutions}
 
 We will equally need the asymptotic form of the solution of Hitchin's equations near the points of $\Delta_q$, where Theorem~\ref{thm:Moc} does not apply. 
 Such a description is provided by R.~Mazzeo, J.~Swoboda, H.~Weiss, F.~Witt in~\cite{MSWW} over a smooth projective curve $X$ of arbitrary genus. 
 This decription is extended by L.~Fredrickson, R.~Mazzeo, J.~Swoboda, H.~Weiss in~\cite{FMSW} to the case of a smooth projective curve $X$ of arbitrary genus for solutions of 
 Hitchin's equations with a finite number of logarithmic singularities and adapted parabolic structure. 
 In accordance with our notations, we let $\sqrt{R}$ be the rescaling parameter of the Higgs field, equal to the parameter $t$ of~\cite{MSWW} and~\cite{FMSW}. 
 We denote the standard holomorphic co-ordinate of $\C$ by $\tilde{z}$ and work in a fixed disc $B_{r_0}(0 ) = \{ |\tilde{z} | \leq r_0 \}$ for some $r_0>0$. 
 We write $\tilde{z} = \tilde{r} e^{\sqrt{-1} \tilde{\varphi}}$ for polar co-ordinates of a point. 
 
 We first describe the solution in the case $0$ is a logarithmic singularity of a harmonic bundle, with Dolbeault parabolic weights denoted by $\alpha^{\pm}\in [0,1 )$ 
 as given in~\eqref{eq:Dolbeault_weights}. Let 
 $$
  m_{\sqrt{R}}\colon \R_+ \to \R
 $$
 be the unique solution of the Painlev\'e III type equation 
 $$
  \left( \frac{\mbox{d}^2}{\mbox{d} \tilde{r}^2} + \frac 1{\tilde{r}} \frac{\mbox{d}}{\mbox{d} \tilde{r}} \right) m_{\sqrt{R}}  = 8 R \tilde{r}^{-1} \sinh (2 m_{\sqrt{R}} ) 
 $$
 satisfying the boundary behaviours 
 \begin{align}
  m_{\sqrt{R}} (\tilde{r}) & \approx \left( \frac 12 + \alpha_j^+ - \alpha_j^- \right) \log (\tilde{r}) = \log (\tilde{r}), \quad \tilde{r} \to 0 + \notag \\
  m_{\sqrt{R}} (\tilde{r}) & \approx \frac 1{\pi} K_0 (8\sqrt{R \tilde{r}}) \approx \frac 1{2 \pi \sqrt{2} \sqrt[4]{R\tilde{r}}} e^{-8 \sqrt{R\tilde{r}}},  \quad \tilde{r} \to \infty \label{eq:Bessel}
 \end{align}
 where the sign $\approx$ stands for complete asymptotic expansion and $K_0$ is the modified Bessel function (or Bessel function of imaginary argument) of order $0$. 
 Furthermore, let us set 
 \begin{equation}\label{eq:function_F}
   F_{\sqrt{R}}(\tilde{r}) = - \frac 18 + \frac 14 \tilde{r} \partial_{\tilde{r}} m_{\sqrt{R}}. 
 \end{equation}
 Making use of the values fixed in~\eqref{eq:Dolbeault_weights}, 
 the \emph{fiducial solution} of Hitchin's equations on $B_{r_0}(0 )$, introduced
 in~\cite[Proposition~3.9]{FMSW}, is defined in a fixed unitary frame as 
 \begin{align}\label{eq:fiducial_solution_singular_unitary}
  A_{\sqrt{R}}^{\fid} & = \left( \frac{\alpha^+ + \alpha^-}4
			\begin{pmatrix}
                         1 & 0 \\
                         0 & 1
                        \end{pmatrix}
    + F_{\sqrt{R}}(\tilde{r}) \begin{pmatrix}
                         1 & 0 \\
                         0 & -1
                        \end{pmatrix}
			 \right) 2 \sqrt{-1} \mbox{d} \tilde{\varphi} \\
			 & = 
			 \left( \frac 14
			\begin{pmatrix}
                         1 & 0 \\
                         0 & 1
                        \end{pmatrix}
    + F_{\sqrt{R}}(\tilde{r}) \begin{pmatrix}
                         1 & 0 \\
                         0 & -1
                        \end{pmatrix}
			 \right) 2 \sqrt{-1} \mbox{d} \tilde{\varphi}, \\
  \theta_{\sqrt{R}}^{\fid} & = \begin{pmatrix}
                       0 & \tilde{r}^{-1/2} e^{m_{\sqrt{R}} (\tilde{r})} \\
                       \tilde{z}^{-1} \tilde{r}^{1/2} e^{-m_{\sqrt{R}} (\tilde{r})} & 0 
                      \end{pmatrix}
		     \mbox{d} \tilde{z}. 
		     \label{eq:fiducial_solution_singular_Higgs}
 \end{align}
 The unitary frame with respect to which the above formulas hold is called \emph{fiducial frame}, and denoted by 
 \begin{equation}\label{eq:fiducial_frame_singular}
  ( e_1^{\fid}, e_2^{\fid} ). 
 \end{equation}

 There is a similar family of solutions for the ramification point $t(q)$ of the 
 spectral curve $X_q$ of the Higgs field. 
 We fix a holomorphic chart $\tilde{z}$ centered at $t(q)$ with associated polar 
 coordinates denoted by $\tilde{r}, \tilde{\varphi}$. 
 Then, with respect to a unitary frame again denoted by 
 \begin{equation}\label{eq:fiducial_frame_apparent}
  ( e_1^{\fid}, e_2^{\fid} ) 
 \end{equation}
 one can introduce 
 \begin{align}\label{eq:fiducial_solution_apparent_unitary}
  A_{\sqrt{R}}^{\fid} & =
    \left( \frac 18 + \frac 14 \tilde{r} \partial_{\tilde{r}} \ell_{\sqrt{R}} \right) 
			\begin{pmatrix}
                         1 & 0 \\
                         0 & -1
                        \end{pmatrix}
			 2 \sqrt{-1} \mbox{d} \tilde{\varphi} \\
  \theta_{\sqrt{R}}^{\fid} & = \begin{pmatrix}
                       0 & \tilde{r}^{1/2} e^{\ell_{\sqrt{R}} (\tilde{r})} \\
                       \tilde{z} \tilde{r}^{-1/2} e^{-\ell_{\sqrt{R}} (\tilde{r})} & 0 
                      \end{pmatrix}
		     \mbox{d} \tilde{z}, \label{eq:fiducial_solution_apparent_Higgs}
 \end{align}
 where $\ell_{\sqrt{R}}$ is the solution of the equation 
 $$
  \left( \frac{\mbox{d}^2}{\mbox{d} \tilde{r}^2} + \frac 1{\tilde{r}} \frac{\mbox{d}}{\mbox{d} \tilde{r}} \right) \ell_{\sqrt{R}}  = 8 R \tilde{r} \sinh (2 \ell_{\sqrt{R}} ) 
 $$
 satisfying the boundary behaviours 
 \begin{align*}
  \ell_{\sqrt{R}} (\tilde{r}) & \approx - \frac 12  \log (\tilde{r}), \quad \tilde{r} \to 0 + \\
  \ell_{\sqrt{R}} (\tilde{r}) & \approx \frac 1{\pi} K_0 \left( \frac 83 \sqrt{R \tilde{r}^3} \right) \approx \frac{\sqrt{3}}{2 \pi \sqrt{2} \sqrt[4]{R\tilde{r}^3}} e^{-\frac 83 \sqrt{R\tilde{r}^3}},  \quad \tilde{r} \to \infty .
 \end{align*}
  The \emph{limiting fiducial solution} is obtained by letting $R\to\infty$ in the above formulas, specifically 
 \begin{align*}
  A_{\infty}^{\fid} & = \frac 18 \begin{pmatrix}
                         1 & 0 \\
                         0 & -1
                        \end{pmatrix}
			 2 \sqrt{-1} \mbox{d} \tilde{\varphi} \\
  \theta_{\infty}^{\fid} & = \begin{pmatrix}
                       0 & \tilde{r}^{1/2}  \\
                       \tilde{r}^{1/2} e^{\sqrt{-1} \tilde{\varphi}} & 0 
                      \end{pmatrix}
		     \mbox{d} \tilde{z}. 
 \end{align*}
 
 In order to assemble the limiting configuration and the fiducial solutions into a family 
 of approximate solutions $h_{R q}^{\app}$,~\cite{FMSW} perform a gluing construction. 
 We describe this construction. 
 
 We start by describing normal forms of the solution of Hitchin's equations near 
 the points of $\Delta_q$. 
 By \cite[Proposition~3.4]{FMSW} there exists a unique holomorphic co-ordinate 
 $\tilde{z}_t$ defined in a neighbourhood of the ramification point $t = t(q)$ such 
 that 
 \begin{equation}
  q (\tilde{z}_t ) = - \tilde{z}_t (\mbox{d} \tilde{z}_t)^2. 
 \end{equation}
 Furthermore, there exists a holomorphic gauge of $\E$ near $t(q)$ with respect to 
 which one has 
 $$
    \theta = \begin{pmatrix}
                0 & 1 \\
                \tilde{z}_t & 0
             \end{pmatrix}
             \mbox{d} \tilde{z}_t, 
             \quad 
             h_{\E,\infty} = Q_t (\tilde{z}_t ) \begin{pmatrix}
                               |\tilde{z}_t |^{\frac 12} & 0 \\
                               0 & |\tilde{z}_t |^{-\frac 12}
                              \end{pmatrix}
 $$
 where $Q_t$ is a locally-defined smooth function completely determined by $h_{\det \E}$ 
 and $q$. 
 Similarly, for any $0\leq j \leq 4$ \cite[Proposition~3.5]{FMSW} shows that there 
 exists some holomorphic co-ordinate $\tilde{z}_j$ of $\E$ near $t_j$ such that 
 we have 
 \begin{equation}
    q (\tilde{z}_j ) = - \tilde{z}_j^{-1} (\mbox{d} \tilde{z}_j)^2. 
 \end{equation}
 Furthermore, there exists a holomorphic gauge of $\E$ near $t_j$ with respect to 
 which one has 
 $$
    \theta = \begin{pmatrix}
                0 & 1 \\
                \tilde{z}_j^{-1} & 0
             \end{pmatrix}
             \mbox{d} \tilde{z}_j, 
             \quad 
             h_{\E,\infty} = Q_j |\tilde{z}_j|^{\alpha_j^+ + \alpha_j^-} 
                              \begin{pmatrix}
                               |\tilde{z}_j|^{-\frac 12} & 0 \\
                               0 & |\tilde{z}_j|^{\frac 12}
                              \end{pmatrix}
                              = 
                              Q_j 
                              \begin{pmatrix}
                               |\tilde{z}_j|^{\frac 12} & 0 \\
                               0 & |\tilde{z}_j|^{\frac 32}
                              \end{pmatrix}                              
 $$
 where $Q_j$ is a locally-defined function completely determined by $h_{\det \E}$ 
 and $q$.
 Let us fix a cutoff function $\chi\colon [0, \infty ) \to [0,1]$ such that 
 $\chi (\tilde{r}) = 1$ for all $\tilde{r} \leq \frac 12$ and 
 $\chi (\tilde{r}) = 0$ for all $\tilde{r} \geq 1$. 
  Define the smooth Hermitian metric $h_{\sqrt{R}}^{\app}$ to be equal 
 \begin{itemize}
  \item to 
    $$
     Q_t (\tilde{z}_t ) 
    \begin{pmatrix}
    |\tilde{z}_t |^{\frac 12} e^{\ell_{\sqrt{R}} (|\tilde{z}_t|) \chi (|\tilde{z}_t|)} & 0 \\
    0 & |\tilde{z}_t |^{-\frac 12} e^{\ell_{\sqrt{R}} (|\tilde{z}_t|) \chi (|\tilde{z}_t|)}
    \end{pmatrix}
    $$
    on $|\tilde{z}_t| \leq 1$ in a holomorphic co-ordinate and gauge with respect to 
    which the above normal form holds ; 
  \item to 
    $$
     Q_j (\tilde{z}_j )
    \begin{pmatrix}
    |\tilde{z}_j |^{\frac 12} e^{m_{\sqrt{R}} (|\tilde{z}_j|) \chi (|\tilde{z}_j|)} & 0 \\
    0 & |\tilde{z}_j |^{\frac 32} e^{m_{\sqrt{R}} (|\tilde{z}_j|) \chi (|\tilde{z}_j|)}
    \end{pmatrix}
    $$
    on $|\tilde{z}_j| \leq 1$ in a holomorphic co-ordinate and gauge with respect to 
    which the above normal form holds ; 
 \item to $h_{\E,\infty}$ on the complement of the above discs. 
 \end{itemize}
 
 Fix a background Hermitian metric $h_0$ on $V$ and let us denote by 
 $H_{\sqrt{R}}, H_{\sqrt{R}}^{\app}$ the $h_0$-Hermitian sections of 
 $End(V)$ satisfying 
 \begin{equation}\label{eq:h_t}
      h_{\sqrt{R}} (v,w) = h_0 \left( (H_{\sqrt{R}})^{\frac 12} v ,
      (H_{\sqrt{R}})^{\frac 12} w \right)
 \end{equation}
 and similarly 
 $$
    h_{\sqrt{R}}^{\app} (v,w) = h_0 \left( (H_{\sqrt{R}}^{\app})^{\frac 12} v ,
    (H_{\sqrt{R}}^{\app})^{\frac 12} w \right).
 $$
 Then, for a fixed Higgs bundle $(\E, \theta )$  one may look for solutions 
 $(\E, {\sqrt{R}} \theta , h_{\sqrt{R}})$ of Hitchin's equations (i.e., the 
 Hermite--Einstein equation for the pair $(\E, {\sqrt{R}} \theta )$) in the 
 form 
 \begin{equation}\label{eq:FMWW_4.1_utan}
  (H_{\sqrt{R}})^{\frac 12} = e^{\gamma_{\sqrt{R}}} (H_{\sqrt{R}}^{\app})^{\frac 12} 
 \end{equation}
 for some $\sqrt{-1} \su(V, h_{\sqrt{R}}^{\app})$-valued section $\gamma_{\sqrt{R}}$.
 
 \begin{theorem}~\cite[Theorem~6.7]{MSWW},\cite[Theorem~6.2]{FMSW}\label{thm:MSWW6.7}
 Assume that all the zeroes of $q$ are simple. 
 Then, there exists $C, \mu > 0$ and a unique section $\gamma_{\sqrt{R}}$ such that the 
 Hermitian metric~\eqref{eq:h_t} with~\eqref{eq:FMWW_4.1_utan} satisfies 
 the Hermite--Einstein equation, and 
 $$
    \Vert \gamma_{\sqrt{R}} \Vert_{\mathcal{C}^{2,\alpha}_b} 
    \leq C e^{-(\mu /2)\sqrt{R}}  
 $$ 
 for an appropriate H\"older norm $\mathcal{C}^{2,\alpha}_b$. 
 \end{theorem}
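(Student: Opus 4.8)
This is the theorem of Mazzeo--Swoboda--Weiss--Witt \cite{MSWW}, in the parabolic form of Fredrickson--Mazzeo--Swoboda--Weiss \cite{FMSW}; the plan is to reproduce their gluing-and-perturbation scheme. First I would rewrite the Hermite--Einstein equation for the pair $(\E, \sqrt{R}\,\theta)$ as a nonlinear elliptic equation for the single unknown $\gamma = \gamma_{\sqrt{R}}$. Substituting $(H_{\sqrt{R}})^{1/2} = e^{\gamma}(H^{\app}_{\sqrt{R}})^{1/2}$ into~\eqref{eq:real_Hitchin} and expanding, the equation takes the schematic form
\[
   L_{\sqrt{R}}\,\gamma \;=\; \mathcal{E}_{\sqrt{R}} \;+\; \mathcal{Q}_{\sqrt{R}}(\gamma),
\]
where $L_{\sqrt{R}}$ is the linearization of the Hitchin operator at $h^{\app}_{\sqrt{R}}$, a Laplace-type operator on $\sqrt{-1}\,\su(V, h^{\app}_{\sqrt{R}})$-valued sections whose zeroth order part involves the non-negative commutator $\bigl[\,\cdot\,,[\sqrt{R}\theta,(\sqrt{R}\theta)^{\dagger}]\bigr]$; the term $\mathcal{E}_{\sqrt{R}}$ measures the failure of $h^{\app}_{\sqrt{R}}$ to solve Hitchin's equations exactly; and $\mathcal{Q}_{\sqrt{R}}$ collects the quadratic and higher order terms in $\gamma$.

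Next I would estimate the error $\mathcal{E}_{\sqrt{R}}$. By construction $h^{\app}_{\sqrt{R}}$ coincides with a fiducial solution on each half-disc $\{|\tilde z|\le\tfrac12\}$ about a point of $\Delta_q$, where the Painlev\'e III equations for $m_{\sqrt{R}}$ and $\ell_{\sqrt{R}}$ guarantee that it solves Hitchin's equations exactly, and it coincides with the limiting configuration $h_{\E,\infty}$ outside the unit discs, where $[\theta,\theta^{\dagger}_{h_{\E,\infty}}]=0$ and the unitary connection is flat, so that it again solves the rescaled equations exactly. Hence $\mathcal{E}_{\sqrt{R}}$ is supported in the gluing annuli $\{\tfrac12\le|\tilde z|\le1\}$, where $\chi$ is non-constant, and there it is controlled by $m_{\sqrt{R}},\ell_{\sqrt{R}}$ and their derivatives. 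The Bessel asymptotics~\eqref{eq:Bessel} of $m_{\sqrt{R}}$ together with the analogous decay of $\ell_{\sqrt{R}}$ then give $\Vert\mathcal{E}_{\sqrt{R}}\Vert_{\mathcal{C}^{0,\alpha}_b}\le Ce^{-c\sqrt{R}}$ for suitable $c,C>0$, in agreement with Theorem~\ref{thm:Moc} away from the ramification.

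The hard part is to prove that $L_{\sqrt{R}}\colon\mathcal{C}^{2,\alpha}_b\to\mathcal{C}^{0,\alpha}_b$ is invertible with $\Vert L_{\sqrt{R}}^{-1}\Vert$ bounded independently of $R$, or growing at most polynomially in $R$. On the region away from $\Delta_q$ the commutator term $\bigl[\,\cdot\,,[\sqrt{R}\theta,(\sqrt{R}\theta)^{\dagger}]\bigr]$ is positive of size of order $R$, which yields coercivity with a large spectral gap there; near each point of $\Delta_q$, where this term degenerates, one replaces $L_{\sqrt{R}}$ by the associated fiducial model operator and analyses its mapping properties on the weighted (``$b$'') H\"older spaces by separation of variables, reducing matters to radial ordinary differential equations of Bessel type. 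Patching the two regimes with a partition of unity and absorbing the exponentially small cross-terms then gives the uniform estimate. This is exactly where the norms $\mathcal{C}^{k,\alpha}_b$ adapted to the degenerating geometry near $\Delta_q$ are indispensable, and it constitutes the technical core of \cite{MSWW} and \cite{FMSW}.

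Finally I would close the argument by a contraction mapping. Set $\mathcal{T}_{\sqrt{R}}(\gamma)=L_{\sqrt{R}}^{-1}\bigl(\mathcal{E}_{\sqrt{R}}+\mathcal{Q}_{\sqrt{R}}(\gamma)\bigr)$; since $\mathcal{Q}_{\sqrt{R}}$ vanishes to second order with locally Lipschitz nonlinearity whose constants are controlled uniformly in $R$, the bounds of the previous two steps imply that, for $R$ large and a suitable $\mu\in(0,2c)$, the map $\mathcal{T}_{\sqrt{R}}$ preserves the ball $\{\Vert\gamma\Vert_{\mathcal{C}^{2,\alpha}_b}\le C'e^{-(\mu/2)\sqrt{R}}\}$ and is a contraction on it. The Banach fixed-point theorem produces the unique $\gamma_{\sqrt{R}}$ with $\Vert\gamma_{\sqrt{R}}\Vert_{\mathcal{C}^{2,\alpha}_b}\le Ce^{-(\mu/2)\sqrt{R}}$, and elliptic regularity upgrades it to a smooth solution of the Hermite--Einstein equation, as claimed.
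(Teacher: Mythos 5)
The paper does not prove Theorem~\ref{thm:MSWW6.7}; it is quoted verbatim with citations to~\cite[Theorem~6.7]{MSWW} and~\cite[Theorem~6.2]{FMSW}, and no argument is given in the text. Your sketch is a faithful outline of the gluing-and-perturbation scheme that those two references actually carry out — recasting the Hermite--Einstein equation as $L_{\sqrt R}\gamma=\mathcal E_{\sqrt R}+\mathcal Q_{\sqrt R}(\gamma)$, localizing the error to the gluing annuli and bounding it exponentially via the Painlev\'e~III/Bessel asymptotics, establishing uniform invertibility of the linearization on weighted H\"older spaces by a model-operator analysis near $\Delta_q$ and coercivity away from it, and closing with the contraction mapping principle — so it matches the actual source of the result rather than anything in this paper. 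Two small points worth flagging if you expand this: the linearization's mapping properties are genuinely delicate near the ramification point $t(q)$, where the model operator in the parabolic setting of~\cite{FMSW} differs from the one at the marked points $t_j$ (exponents $\pm\tfrac12$ versus the parabolic weights $\tfrac14,\tfrac34$), so one must treat the two types of fiducial models separately; and the constant $\mu$ in the final estimate is tied to the slowest decay rate among $m_{\sqrt R}$ and $\ell_{\sqrt R}$ over the discs of smallest radius, which is why it appears as $\mu/2$ rather than $\mu$.
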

 
 The practical implication of this result for our purpose is that one may perturb 
 the approximate solution by a term exponentially small in $\sqrt{R}$ so as to 
 obtain the solution of Hitchin's equations. 
 We will denote by $\nabla_{\sqrt{R}}$ the flat connection associated to the 
 solution $(\E, {\sqrt{R}} \theta , h_{\sqrt{R}})$, i.e. 
 \begin{equation}\label{eq:flat_connection}
  \nabla_{\sqrt{R}} = \bar{\partial}_{\E} + \partial^{h_{\sqrt{R}}} + 
  \sqrt{R} \theta + \sqrt{R} \theta^{\dagger, h_{\sqrt{R}}}
 \end{equation}
 where $\dagger, h_{\sqrt{R}}$ stands for adjoint with respect to $h_{\sqrt{R}}$. 
 Then, $\nabla_{\sqrt{R}}$ is approximated up to exponentially decreasing error 
 terms in $R$ by 
 $$
    \nabla_{\sqrt{R}}^{\app} = \bar{\partial}_{\E} + \partial^{h_{\sqrt{R}}^{\app}} + 
  \sqrt{R} \theta + \sqrt{R} \theta^{\dagger, h_{\sqrt{R}}^{\app}}. 
 $$

\section{Simpson's Fenchel--Nielsen co-ordinates}\label{sec:Simpson}

Simpson has defined in~\cite[Section~10]{Sim} co-ordinates of $\Mod_{\Betti}(\vec{c}, \vec{\gamma})$. 
In this section, we will recall the definition of these co-ordinates. 
The general element of the Betti moduli space is a local system $V$ on 
$\CP1 \setminus D$, given by a representation $\chi$ of its fundamental group, 
with eigenvalues around the punctures $t_j$ equal to $c_j^{\pm} = \pm \sqrt{-1}$. 
For each $2\leq i \leq 3$ there are two different co-ordinates: $l_i \in \C$ and $[p_i : q_i] \in \CP1$. 
By analogy with classical Teichm\"uller theory, we will call co-ordinates of the first type $l_i$ the \emph{complex length co-ordinates} and those of the second 
type $[p_i : q_i]$ the \emph{complex twist co-ordinates}. 
Indeed, the traditional length co-ordinates in Teichm\"uller space belong to $\R$ and $\C$ is its complexification; similarly, the twist co-ordinates take values in 
$S^1$, which is the real part $\R P^1$ of $\CP1$ for the canonical real structure.

\begin{remark}
 The construction of the co-ordinates depend on some choices, most importanty the 
 radii $r_0$ of discs around the punctures. However, as our main concern is 
 the homotopy type of the diffeomorphism $\RH \circ \psi$ and different 
 choices obviously result in homotopic maps, the actual choices do not fundamentally 
 affect our arguments. As a matter of fact, throughout we will let $r_0 \to 0$. 
\end{remark}

\subsection{Complex length co-ordinates}\label{sec:length}

We fix disjoint open discs $D_j$ around the points $t_j$ for $0\leq j \leq 4$; to fix our ideas we pick $D_j = B_{r_0}(t_j )^o = \{ |z - t_j| < r_0 \}$ for some $0 < r_0 \ll 1$ 
so that the different discs $D_j$ are disjoint. We then set 
\begin{equation}\label{eq:surface_S}
 S = \CP1 \setminus (D_0 \cup \cdots \cup D_4). 
\end{equation}
Then $S$ is a smooth surface with boundary, inheriting an orientation from $\CP1$. 
Let us denote by $\xi_j$ the boundary component $\partial D_j$, taken with the orientation induced from $S$. 
Specifically, we let 
\begin{equation}\label{eq:xi_j}
  \xi_j (\varphi ) = t_j + r_0 e^{\sqrt{-1} \varphi} \quad \mbox{for} \; \varphi \in [0,2\pi ]. 
\end{equation}
Fix a simple loop $\rho_2$ in $S$ separating the boundary components $\xi_1, \xi_2$ from the remaining boundary components $\xi_3, \xi_4, \xi_0$, 
and a simple loop $\rho_3$ in $S$ separating the boundary components $\xi_4, \xi_0$ from the remaining boundary components $\xi_1, \xi_2, \xi_3$, 
so that $\rho_2$ and $\rho_3$ be disjoint from each other. 
These curves then decompose $S$ into the union 
\begin{equation}\label{eq:pants}
 S = S_2 \cup S_3 \cup S_3 
\end{equation}
of three pairs of pants: 
\begin{itemize}
 \item $S_2$ with boundary components $\xi_1, \xi_2, \rho_2$; 
 \item $S_3$ with boundary components $\xi_3, \rho_2, \rho_3$;
 \item and $S_3$ with boundary components $\xi_4, \xi_0, \rho_3$. 
\end{itemize}
This decomposition gives rise to a decomposition of $\CP1$ into the three closed connected analytic subsets 
\begin{align}
 X_2 & = S_2 \cup D_1 \cup D_2 \label{eq:X2} \\
 X_3 & = S_3 \cup D_3  \label{eq:X3} \\
 X_4 & = S_3 \cup D_4 \cup D_0.  \label{eq:X4}
\end{align}
Furthermore, we fix 
\begin{itemize}
 \item base points $x_i \in \mbox{int}(S_i)$ and  $s_i \in \rho_i$;
 \item paths $\psi_i$ connecting $x_i$ to $x_{i+1}$ passing through $s_i$; 
 \item paths $\eta_1, \eta_2$ connecting $x_2$ respectively to the base points of $\xi_1, \xi_2$;
 \item a path $\eta_3$ connecting $x_3$ to the base point of $\xi_3$; 
 \item paths $\eta_4, \eta_0$ connecting $x_4$ respectively to the base points of $\xi_4, \xi_0$. 
\end{itemize}
As the  $D_j$ will actually depend on its radius $r_0$, we need to make a coherent choice for the paths $\eta_0, \ldots , \eta_4$. 
We achieve this for instance for $\eta_1$ by first fixing a path starting at $x_2$ and ending at $t_1$, and then restricting this fixed path to the (uniquely determined) sub-interval of its domain 
such that the restriction connects $x_2$ to the base point of $\xi_1$. We apply a similar procedure to $\eta_j$ for all $0 \leq j \leq 4$. 
We set $\rho_1 = \xi_1$ and $\rho_4 = \xi_0$. 

\begin{figure}
\centering
\begin{tikzpicture}[baseline = 5cm]
 \draw (0,0) circle [x radius = 6cm, y radius = 4cm];
 \filldraw [fill=gray, draw=black] (-4.2,1) circle [x radius = 0.5cm, y radius = 0.75cm];
 \filldraw [fill=gray, draw=black] (-4.2,-1) circle [x radius = 0.5cm, y radius = 0.75cm];
 \filldraw [fill=gray, draw=black] (0,-2.5) circle [x radius = 0.75cm, y radius = 0.5cm];
 \filldraw [fill=gray, draw=black] (4.2,1) circle [x radius = 0.5cm, y radius = 0.75cm];
 \filldraw [fill=gray, draw=black] (4.2,-1) circle [x radius = 0.5cm, y radius = 0.75cm];
 \draw (-2,-3.75) arc [x radius = 1cm, y radius = 3.75cm, start angle = -90, end angle = 90];
 \draw [dashed] (-2,3.75) arc (90:270: 1cm and 3.75cm);
 \draw (2,3.75) arc (90:-90: 1cm and 3.75cm);
 \draw [dashed] (2,3.75) arc (90:270: 1cm and 3.75cm);
 \filldraw [black] (-2.5,2.5) circle (2pt) node [anchor = west] {$x_2$};
 \filldraw [black] (0,2.5) circle (2pt) node [anchor = south] {$x_3$};
 \filldraw [black] (3.5,2.5) circle (2pt) node [anchor = north] {$x_4$};
 \filldraw [black] (-1,0) circle (2pt) node [anchor = south] {$s_2$};
 \filldraw [black] (3,0) circle (2pt) node [anchor = south] {$s_3$};
 \filldraw [black] (-3.7,1) circle (2pt);
 \filldraw [black] (-3.7,-1) circle (2pt);
 \filldraw [black] (0,-2) circle (2pt);
 \filldraw [black] (4.7,1) circle (2pt);
 \filldraw [black] (4.7,-1) circle (2pt);
 \draw (-2.5,2.5) .. controls (-2.5,1.5) and (-2,0) .. (-1,0);
 \draw (-1,0) .. controls (0,0) and (-1,2.5) .. (0,2.5);
 \draw (0,2.5) .. controls (1,2.5) and (2,0) .. (3,0);
 \draw (3,0) .. controls (4,0) and (2.5,2.5) .. (3.5,2.5);
 \draw (-2.5,2.5) to [out = 180, in = 0] (-3.7,1) (-2.5,2.5) to [out = 225, in = 45] (-3.7,-1); 
 \draw (0,2.5) to [out = 295, in = 75] (0,-2); 
 \draw (3.5,2.5) .. controls (5, 2.5) and (5,1) .. (4.7,1) (3.5,2.5) .. controls (5.5, 2.5) and (5.7,0) .. (4.7,-1); 
 \draw (-4.2,1.9) node {$\rho_1 = \xi_1$};
 \draw (-4.2,1) node {$D_1$};
 \draw (-5,-1) node {$\xi_2$};
 \draw (-4.2,-1) node {$D_2$};
 \draw (0.5,-1.9) node {$\xi_3$};
 \draw (0,-2.5) node {$D_3$};
 \draw (4.2,1.9) node {$\xi_0 = \rho_4$};
 \draw (4.2,1) node {$D_0$};
 \draw (4.2,-2) node {$\xi_4$};
 \draw (4.2,-1) node {$D_4$};
 \draw (3,3) node {$\rho_3$};
 \draw (-1,3) node {$\rho_2$};
 \draw (-2.5,1) node {$\psi_2$};
 \draw (2.2,1) node {$\psi_3$};
 \draw (-3.4,1.5) node {$\eta_1$};
 \draw (-3.4,0) node {$\eta_2$};
 \draw (0.2,0) node {$\eta_3$};
 \draw (5,1) node {$\eta_0$};
 \draw (5,0) node {$\eta_4$};
 \draw [decoration=brace,thick,decorate] (-2.1,-4.5) -- node[below=.5cm] {$S_2$} (-6,-4.5);
 \draw [decoration=brace,thick,decorate] (1.9,-4.5) -- node[below=.5cm] {$S_3$} (-1.9,-4.5);
 \draw [decoration=brace,thick,decorate] (6,-4.5) -- node[below=.5cm] {$S_3$} (2.1,-4.5);
\end{tikzpicture}
\caption{Decomposition of $S$ into three pairs of pants, indicating base points and paths. The shaded regions do not belong to $S$.}
\label{fig}
\end{figure}
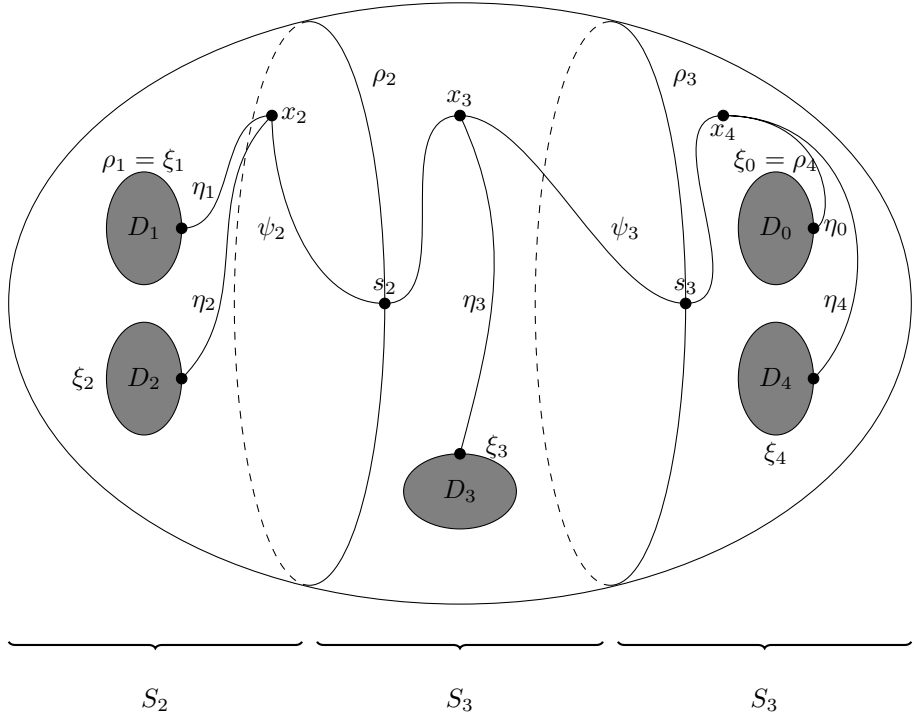

Following~\cite{Sim}, for $2\leq i \leq 3$ we set $l_i (V) = l_i(\chi )$ for the 
trace of $\chi$ evaluated on the class of the loop $\rho_i$: 
$$
    l_i (V) = \tr \chi  [ \rho_i ]
$$
By definition, $l_i$ is the $i$'th \emph{complex length co-ordinate}. 

\subsection{Complex twist co-ordinates}\label{sec:twist}

Twist co-ordinates are only defined over the part $\Mod_{\Betti}(\vec{c}, \vec{\gamma})'$ of the moduli space where we have $| l_i | \neq 2$ (equivalently, 
the eigenvalues of $\chi [ \rho_i ]$ are distinct) 
for both $2\leq i \leq 3$, for the complex length co-ordinates $l_i$ introduced in Section~\ref{sec:length}, and a further stability condition holds (see \cite[Definition~5.1]{Sim}). 
It is proven in \cite[Corollary~9.2]{Sim} that the homotopy type of the dual boundary complex of $\Mod_{\Betti}(\vec{c}, \vec{\gamma})$ agrees with the one of $\Mod_{\Betti}(\vec{c}, \vec{\gamma})'$. 

Let us introduce the scalar quantities 
\begin{align*}
 l_1 & = c_1^+ + c_1^- \\
 l_4 & = c_4^+ + c_4^- \\
 u_i & = \frac{l_{i-1} - c_i^- l_i}{c_i^+ - c_i^-} \\
 w_i & = u_i (l_i - u_i) -1 
\end{align*}
for $2\leq i \leq 4$, where $l_i$ are the complex length co-ordinates associated to $V$ as in Section~\ref{sec:length}. Furthermore, introduce the matrices
\begin{align*}
  A_i & = \begin{pmatrix}
          c_i^+ & 0 \\
          0 & c_i^-
         \end{pmatrix}
         \\
  R_i & = \begin{pmatrix}
          u_i & 1 \\
          w_i & ( l_i - u_i)
         \end{pmatrix}
         \\
  R'_{i-1} & = A_i R_i = \begin{pmatrix}
          c_i^+ u_i & c_i^+ \\
          c_i^- w_i & c_i^- ( l_i - u_i)
         \end{pmatrix}
         \\
  T_i & = \begin{pmatrix}
          0 & 1 \\
          -1 & l_i
         \end{pmatrix}
         \\
  U_i & = \begin{pmatrix}
          1 & 0 \\
          u_i & 1
         \end{pmatrix}. 
\end{align*}
These quantities are all determined by the fixed constants $c_i^{\pm}$ and the length co-ordinates $l_2, l_3$. 

Let $V_i (l_{i-1}, l_i )$ denote the local system on $S_i$ whose monodromy matrices around $\rho_{i-1},\rho_i$ and $\xi_i$, acting on its fiber over $x_i$, are respectively $R'_{i-1}, R_i, A_i$.  
\cite[Corollary~10.3]{Sim} implies that if $V|_{S_i}$ is stable then there exists a unique (up to a scalar) isomorphism 
\begin{equation}\label{eq:hi}
 h_i \colon V|_{S_i} \to V_i (l_{i-1}, l_i ). 
\end{equation}
By an abuse of notation, we denote by 
\begin{equation}\label{eq:psii}
  \psi_i \colon V_{x_i} \to V_{x_{i+1}} 
\end{equation}
the parallel transport map of $V$ along the path $\psi_i$. Introduce 
\begin{equation}
 P_i = h_{i+1} \circ \psi_i \circ h_i^{-1} \colon V_i (l_{i-1}, l_i )_{x_i} \to V_{i+1} (l_i, l_{i+1} )_{x_{i+1}} 
\end{equation}
and 
\begin{equation}
 Q_{i-1} = A_i^{-\frac 12} U_i P_{i-1} U_{i-1}^{-1},
\end{equation}
for any choice of the square root of $A_i$. 
It turns out that one has 
\begin{equation}\label{eq:twist}
   Q_i = \begin{pmatrix}
          p_i & q_i \\
          -q_i & p_i + l_i q_i
         \end{pmatrix}
\end{equation}
for some $[p_i : q_i]\in\CP1$ satisfying
$$
  p_i^2 + l_i p_i q_i + q_i^2 \neq 0. 
$$
By definition, $[p_i : q_i]\in\CP1$ for $i \in \{ 2, 3 \}$ is the $i$'th 
\emph{complex twist co-ordinate}. Notice that a scalar factor on $Q_i$ has no 
impact on $[p_i : q_i]$. Let us introduce 
$$
  \mathbf{Q} = \{ (l, [p:q]) \in (\C \setminus \{ \pm 2 \}) \times \CP1 \; \mbox{satisfying} \; p^2 + l p q + q^2 \neq 0  \}. 
$$
According to \cite[Theorem~10.6]{Sim}, the map 
\begin{align*}
 \Mod_{\Betti}(\vec{c}, \vec{\gamma})' & \to \mathbf{Q}^2 \\
 V & \mapsto ( (l_2, [p_2 : q_2]), (l_3, [p_3 : q_3]) )
\end{align*}
is a diffeomorphism.

\subsection{Homotopy type of compactifying divisor}\label{ssec:homotopy}

According to~\cite[Lemma~10.7]{Sim} and~\cite[Lemma~6.2]{Pay} we have 
homotopy equivalences 
\begin{align}
  \mathbb{D} \partial \mathbf{Q} & \sim S^1 \label{eq:boundary_Q} \\ 
  \mathbb{D} \partial \mathbf{Q}^2 & \sim S^1 \ast S^1 \sim S^3, \label{eq:boundary_Q_squared}
\end{align}
where $X \ast Y$ stands for the join of the topological spaces $X,Y$. 
Combining these arguments, \cite[Corollary~10.8]{Sim} shows that 
$$
  \mathbb{D} \partial \Mod_{\Betti}(\vec{c}, \vec{\gamma}) \sim S^3. 
$$

Let us spell out explicitly the homotopy equivalence~\eqref{eq:boundary_Q}. 
We now consider two copies of 
$$
    \mathbf{Q} \subset \CP1 \times \CP1, 
$$
that we will denote by $\mathbf{Q}_i$ for $i \in \{ 2, 3 \}$. 
A compactification of $\mathbf{Q}_i$ is $\CP1 \times \CP1$, an open affine of
the first component being parametrised by $l_i$, and the second component being
parametrised by $[p_i : q_i]$. 
Let us denote by $F_{i,+}, F_{i,-}, F_{i,\infty}$ the fibers of the first projection over $2,-2$ and $\infty$ respectively. 
The irreducible decomposition of the compactifying divisor of $\mathbf{Q}_i$ 
in $\CP1 \times \CP1$ reads as 
$$
  \partial \mathbf{Q}_i = \CP1 \times \CP1 \setminus \mathbf{Q}_i = 
  C_i \cup F_{i,+} \cup F_{i,-} \cup F_{i,\infty}
$$
where $C_i$ is the quadric defined by $p_i^2 + l_i p_i q_i + q_i^2 = 0$, see Figure~\ref{fig:compactifying_divisor}. 
\begin{figure}
 \centering
 \begin{tikzpicture}[baseline = 5cm]
    \draw[dashed] (0,0) -- (0,5);
    \draw (0,5) -- (5,5);
    \draw (0,0) -- (5,0);
    \draw[dashed] (5,0) -- (5,5);
    \draw[dashed] (2.5,2.5) circle (2.5); 
    \draw[dashed] (2.5,0) -- (2.5,5);
    \draw (-0.5,1) node {$F_{i,-}$};
    \draw (5.5,1) node {$F_{i,+}$};
    \draw (2.5,1) node [anchor = west] {$F_{i,\infty}$};
    \draw (4,4) node {$C_i$};
    \draw (-.5,5) node {$[0:1]$}; 
    \draw (-.5,0) node {$[1:0]$};
 \end{tikzpicture}
 \caption{Compactifying divisor in $\CP1 \times \CP1$.}
\label{fig:compactifying_divisor}
\end{figure}
Clearly, $C_i$ is generically $2:1$ over $\CP1_t$, with ramification points in the fibers $F_{i,+}, F_{i,-}$. 
Therefore, the compactifying divisor in $\CP1 \times \CP1$ is not normal crossing. 
To remedy this failure, we consider the blow up of $\CP1 \times \CP1$ in the intersection points $(2, [1:-1])$ and $(-2, [1:1])$, see Figure~\ref{fig:compactifying_divisor1}. 
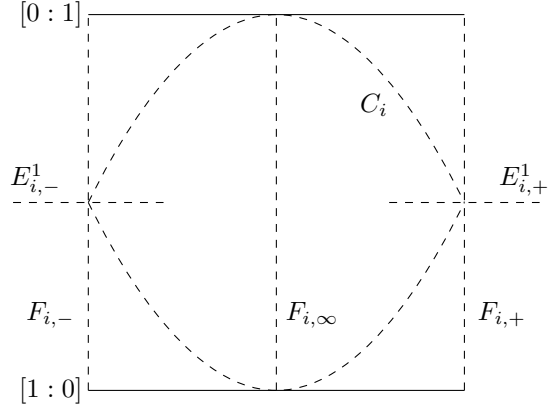
\begin{figure}
 \centering
 \begin{tikzpicture}[baseline = 5cm]
    \draw[dashed] (0,0) -- (0,5);
    \draw (0,5) -- (5,5);
    \draw (0,0) -- (5,0);
    \draw[dashed] (5,0) -- (5,5);
    \draw[dashed] (0,2.5) parabola bend (2.5,5) (5,2.5); 
    \draw[dashed] (0,2.5) parabola bend (2.5,0) (5,2.5); 
    \draw[dashed] (2.5,0) -- (2.5,5);
    \draw (-0.5,1) node {$F_{i,-}$};
    \draw (5.5,1) node {$F_{i,+}$};
    \draw (2.5,1) node [anchor = west] {$F_{i,\infty}$};
    \draw (-0.7,2.8) node {$E_{i,-}^1$};
    \draw (5.8,2.8) node {$E_{i,+}^1$};
    \draw (3.8,3.8) node {$C_i$};
    \draw (-.5,5) node {$[0:1]$}; 
    \draw (-.5,0) node {$[1:0]$};
    \draw[dashed] (-1,2.5) -- (1,2.5);
    \draw[dashed] (4,2.5) -- (6,2.5);
 \end{tikzpicture}
 \caption{Compactifying divisor in first blow-up.}
\label{fig:compactifying_divisor1}
\end{figure}
We continue to denote by $C_i,F_{i,+}, F_{i,-}, F_{i,\infty}$ the proper transforms in $X$ of the named divisors, and we denote by $E_{i,+}^1, E_{i,-}^1$ the exceptional divisors. 
The compactifying divisor in the blow-up is 
$$
  C_i \cup F_{i,+} \cup F_{i,-} \cup F_{i,\infty} \cup E_{i,+}^1 \cup E_{i,-}^1. 
$$
However, this is still not simple normal crossing, because of the triple intersection points of $C_i, E_{i,+}^1$ and  $F_{i,+}$ on the one 
hand, and $C_i, E_{i,-}^1$ and  $F_{i,-}$ on the other hand. Therefore, we need to blow up again in these intersection points, see Figure~\ref{fig:compactifying_divisor2}. 
\begin{figure}
 \centering
 \begin{tikzpicture}[baseline = 5cm]
    \draw[dashed] (0,0) -- (0,5);
    \draw (0,5) -- (5,5);
    \draw (0,0) -- (5,0);
    \draw[dashed] (5,0) -- (5,5);
    \draw[dashed] (2.5,2.5) ellipse (2 and 2.5); 
    \draw[dashed] (2.5,0) -- (2.5,5);
    \draw (-0.5,1) node {$F_{i,-}$};
    \draw (5.5,1) node {$F_{i,+}$};
    \draw (2.5,1) node [anchor = west] {$F_{i,\infty}$};
    \draw (-0.6,3) node {$E_{i,-}^1$};
    \draw (6.4,3) node {$E_{i,+}^1$};
    \draw (1.2,2.5) node {$E_{i,-}^2$};
    \draw (3.8,2.5) node {$E_{i,+}^2$};
    \draw (3.8,3.8) node {$C_i$};
    \draw (-.5,5) node {$[0:1]$}; 
    \draw (-.5,0) node {$[1:0]$};
    \draw[dashed] (-1.5,2.5) -- (1,2.5);
    \draw[dashed] (4,2.5) -- (6.5,2.5);
    \draw[dashed] (-1,1.5) -- (-1,3.5);
    \draw[dashed] (6,1.5) -- (6,3.5);
 \end{tikzpicture}
 \caption{Compactifying divisor in second blow-up.}
\label{fig:compactifying_divisor2}
\end{figure}
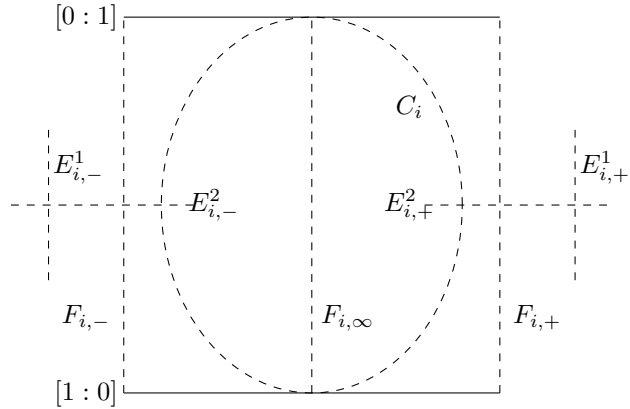
The compactifying divisor in this surface is of normal crossing. 
Dropping the subscripts $i$ for simplicity, its dual complex is 

\begin{picture}(200,100)(-100,0)

 \put(50,50){\circle*{4}} 	
 \put(100,50){\circle*{4}}	
 \put(75,75){\circle*{4}}	
 \put(75,25){\circle*{4}}	
 \put(25,25){\circle*{4}}	
 \put(25,75){\circle*{4}}	
 \put(125,25){\circle*{4}}	
 \put(125,75){\circle*{4}}	
 
 \put(75,25){\line(1,1){50}}
 \put(75,25){\line(-1,1){50}}
 \put(50,50){\line(-1,-1){25}}
 \put(100,50){\line(1,-1){25}}
 
 \qbezier(75,25) (50,50) (75,75)
 \qbezier(75,25) (100,50) (75,75)
 
 \put(25,85){\makebox(0,0){$F_{-}$}}
 \put(50,60){\makebox(0,0){$E_{-}^2$}}
 \put(98,60){\makebox(0,0){$E_{+}^2$}}
 \put(125,85){\makebox(0,0){$F_{+}$}}
 \put(75,85){\makebox(0,0){$F_{\infty}$}}
 \put(75,15){\makebox(0,0){$C$}}
 \put(25,15){\makebox(0,0){$E_{-}^1$}}
 \put(125,15){\makebox(0,0){$E_{+}^1$}}
 
 \put(70,50){\makebox(0,0){$e_\infty$}}
 \put(80,60){\makebox(0,0){$e_0$}}
\end{picture}

Obviously, this graph deformation retracts to the cycle defined by the vertices 
$F_{i,\infty}, C_i$ together with the edges $e_0, e_{\infty}$ connecting them. 
Notice that in $\CP1 \times \CP1$, this cycle reduces to the normal crossing components $F_{i,\infty}, C_i$. 

Next, let us be more precise about the homotopy
equivalence~\eqref{eq:boundary_Q_squared} following~\cite[Lemma~6.2]{Pay}. 
The compactifying divisor of $\mathbf{Q}^2 = \mathbf{Q}_2 \times \mathbf{Q}_3$ 
in 
$$
    (\CP1)^4 = (\CP1)^2 \times (\CP1)^2
$$ 
can be given as 
$$
   ( \partial \mathbf{Q}_2 \times (\CP1)^2 ) \cup 
   ( (\CP1)^2 \times \partial \mathbf{Q}_3 ). 
$$
As we have explained above, up to homotopy of $\mathbb{D}\partial \mathbf{Q}_i$ 
we only need to consider the divisor components $C_i$ and $F_{i,\infty}$ of 
$\partial \mathbf{Q}_i$ and the edges connecting them; 
in the rest of this section, we will thus replace $\mathbb{D}\partial \mathbf{Q}_i$ 
by this subcomplex without changing the notation. 
Our notation in the dual graph of $\partial \mathbf{Q}_i$ is that $e_{i, \infty}$ stands 
for the edge corresponding to the point 
$$
    (l_i, [p_i : q_i]) = (\infty, [1:0]) \in (\CP1)^2,
$$
and $e_{i,0}$ for the edge corresponding to 
$$
    (l_i, [p_i : q_i]) = (\infty, [0:1]) \in (\CP1)^2. 
$$
Now, to each of the four points 
\begin{align}
     (\infty, [1:0]), (\infty, [1:0]) \notag \\
     (\infty, [1:0]), (\infty, [0:1]) \notag \\
     (\infty, [0:1]), (\infty, [1:0]) \notag \\
     (\infty, [0:1]), (\infty, [0:1]) \label{eq:point_of_interest}   
\end{align}
of $(\CP1)^4$, there corresponds in $\mathbb{D}\partial \mathbf{Q}^2$ a 
$3$-dimensional simplex, namely the join of the edges in 
$\mathbb{D}\partial \mathbf{Q}_i$ corresponding to each component. 
Thus, the natural $\Delta$-complex structure of 
$\mathbb{D}\partial \mathbf{Q}^2 \sim S^3$ contains these four $3$-simplices, 
which in order are 
\begin{align*}
 e_{2, \infty} & \ast e_{3, \infty} \\
 e_{2, \infty} & \ast e_{3, 0} \\
 e_{2, 0} & \ast e_{3, \infty} \\
 e_{2, 0} & \ast e_{3, 0}.
\end{align*}

\section{Asymptotic behaviour of Fenchel--Nielsen co-ordinates}

In this section we will determine the asymptotic behaviour of the co-ordinates reviewed in Section~\ref{sec:Simpson} as $R\to\infty$, for fixed $q \in S^3_1$. 
The constants we will find in this section may all depend on the divisor $D$. 
On the other hand, their dependence on $q$ is crucial, hence we will indicate when 
a constant depends on $q$. 
 
 \subsection{Monodromy of diagonalizing frames}\label{ssec:monodromy}
 For our purpose, we first need to determine the monodromy transformation of a diagonalizing frame of the solution to Hitchin's equations along a loop around 
 a logarithmic point or a ramification point.

 Clearly, the gauge transformation $g_{\sqrt{R}}$ provided by Theorem~\ref{thm:Moc} is unique up to a reducible transformation, i.e. one preserving the decomposition of~\eqref{eq:semisimple_model_flat_connection} into abelian summands. 
 Consider now any simple loop 
 $$
 \gamma\colon[0,1] \to \C \setminus \Delta_{q}. 
 $$
 \begin{defn}
 Let $k(\gamma , q ) \in \Z / (2)$ be the number of points of $\Delta_{q}$ 
 contained in one of the connected components of $\CP1 \setminus \gamma$, counted with multiplicity and modulo $2$. 
 \end{defn}
 Notice that the number of points of $\Delta_{q}$ in the two connected components 
 of $\CP1 \setminus \gamma$ add up to $6$, so $k(\gamma , q )$ is independent of the chosen component. 
 
 The loop $\gamma$ may be covered by a finite union of compact discs $K_1, \ldots ,K_N$ as in Theorem~\ref{thm:Moc}, so we get for each $K_l$ a local holomorphic trivialization 
 $(\vec{e}_1^l, \vec{e}_2^l)$ of $\E$ specified by the local gauges $g_R$. 
 We assume that for each $1\leq l \leq N$ we have $K_l \cap K_{l+1} \cap \gamma ( [0,1] ) \neq \varnothing$ (where $l=N+1$ is identified with $l=1$), and pick any point $\gamma ( \tau_l ) \in K_l \cap K_{l+1}$. 
 For $1\leq l \leq N -1$, up to applying a constant gauge transformation over $K_{l+1}$ we may assume that 
 $$
 (\vec{e}_1^l (\gamma ( \tau_l ) ), \vec{e}_2^l(\gamma ( \tau_l ) ) )  = (\vec{e}_1^{l+1} (\gamma ( \tau_l ) ), \vec{e}_2^{l+1}(\gamma ( \tau_l ) ) ). 
 $$
 Let $M(\gamma , Rq )$ be the monodromy transformation of the local trivializations, defined by 
 $$
  (\vec{e}_1^N (\gamma ( \tau_N ) ), \vec{e}_2^N(\gamma ( \tau_N ) )) = (\vec{e}_1^1 (\gamma ( \tau_N ) ), \vec{e}_2^1 (\gamma ( \tau_N ) )) M(\gamma , Rq ). 
 $$
 Let  $T$ stand for the transposition matrix 
 $$
  T = \begin{pmatrix}
   0 & 1 \\
   1 & 0
  \end{pmatrix}. 
 $$
 \begin{prop}\label{prop:monodromy_frame1}
  For any simple loop $\gamma$ we have 
  $$
    M(\gamma , Rq ) = \begin{pmatrix}
                  \alpha (\gamma , Rq )  & 0 \\
                  0 & \delta (\gamma , Rq )
                 \end{pmatrix}
    T^{k(\gamma , q )} 
  $$
  for some $\alpha (\gamma , Rq ) , \delta (\gamma , Rq ) \in \C^{\times}$. 
 \end{prop}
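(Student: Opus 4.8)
The plan is to reduce the statement to the elementary monodromy theory of the spectral double cover $p_{Rq}\colon X_{Rq}\to\CP1$. The \textbf{key point} to establish first is that on each disc $K_l$ the trivialization $(\vec e_1^l,\vec e_2^l)$ is subordinate to the decomposition of $\E|_{K_l}$ into the two rank-one eigenbundles of the Higgs field $\theta$. This should follow from the construction of $g_{\sqrt R}$ together with the reducible ambiguity recalled above: $g_{\sqrt R}$ is pinned down, modulo that ambiguity, by the requirement that it carry $\nabla_{\sqrt R}$ into a connection approximating the model~\eqref{eq:semisimple_model_flat_connection}, whose Higgs component is the diagonal matrix $\operatorname{diag}(Z_+(Rq,z),Z_-(Rq,z))$ with $Z_\pm$ as in~\eqref{eq:Z_tilde2}; since $Z_+\neq Z_-$ everywhere on $K_l$ --- recall $K_l\cap\Delta_q=\varnothing$ --- this forces each $\vec e_i^l$ to span a $\theta$-eigenline, the residual freedom being exactly the rescaling of those two lines. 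Equivalently, via~\eqref{SES:direct_sum_decomposition} the ordered pair of eigenlines spanned by $(\vec e_1^l,\vec e_2^l)$ is the pull-back of $(\L_{\E},\rho^*\L_{\E})$ along one of the two local holomorphic sections $\sigma_l^{\pm}\colon K_l\to X_{Rq}$ of $p_{Rq}$.

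Granting this, I would argue as follows. After shrinking the cover $\{K_l\}$ one may assume that every overlap $K_l\cap K_{l+1}$ is connected; over it the only indeterminacy is whether $\sigma_l^+$ is continued by $\sigma_{l+1}^+$ or by $\sigma_{l+1}^-$, i.e.\ whether passing from $K_l$ to $K_{l+1}$ interchanges the two eigenlines. Running the chain $K_1\to\cdots\to K_N\to K_1$ once around, the resulting permutation of $\{\vec e_1,\vec e_2\}$ is by construction the deck-transformation monodromy of the two-sheeted cover $p_{Rq}$ along $\gamma$. That monodromy equals the product of the local monodromies about the points of $\Delta_q$ lying in one of the two components of $\CP1\setminus\gamma$: a simple point of $\Delta_q$ is a genuine branch point and contributes the transposition $T$, whereas a point of multiplicity $2$ (the exceptional case $t(q)=t_{j_0}$ of~\eqref{eq:discriminant2}, where $q$ vanishes to second order and $\sqrt{Rq}$ is single-valued) is not a branch point and contributes $T^2=\operatorname{Id}$; counting with multiplicity, the permutation is therefore $T^{k(\gamma,q)}$. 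Since $\Delta_q$ consists of $6\equiv 0\pmod 2$ points of $\CP1$ counted with multiplicity (Proposition~\ref{prop:Hitchin_base}), the parity $k(\gamma,q)$ is the same for either component, as already noted.

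Putting the two steps together: when $(\vec e_1^1,\vec e_2^1)$ is transported once around $\gamma$ through the chain and compared with itself at $\gamma(\tau_N)\in K_N\cap K_1$, its two columns have been permuted by $T^{k(\gamma,q)}$ and each rescaled by a nonzero factor, which is exactly the asserted identity $M(\gamma,Rq)=\operatorname{diag}(\alpha(\gamma,Rq),\delta(\gamma,Rq))\,T^{k(\gamma,q)}$ with $\alpha(\gamma,Rq),\delta(\gamma,Rq)\in\C^\times$. I expect the \textbf{main obstacle} to be the first paragraph: since Theorem~\ref{thm:Moc} is only an asymptotic statement, some care is needed to tie the trivializations $(\vec e_1^l,\vec e_2^l)$ to the genuinely holomorphic, $R$-independent eigendecomposition of $\theta$ over $\C\setminus\Delta_q$ (exploiting the reducible freedom in $g_{\sqrt R}$); once this is done the rest is pure covering-space topology. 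A lesser point is to verify that the $K_l$ and their overlaps can be chosen with the connectedness used above.
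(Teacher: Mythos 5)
Your proposal is correct and follows essentially the same approach as the paper's own proof: identify the local trivializations with the eigenline decomposition of $\theta$, then read off the permutation from the deck-transformation (sheet) monodromy of the double cover $p_{Rq}$ along $\gamma$, with branch points contributing $T$ and the multiplicity-two degeneration $t(q)\in D$ contributing $T^2=\operatorname{Id}$. The only stylistic difference is that you spend more effort justifying why $(\vec e_1^l,\vec e_2^l)$ must span the $\theta$-eigenlines (and handle the two cases uniformly via covering-space language), whereas the paper asserts the eigenline identification directly and then splits into $t(q)\notin D$ and $t(q)\in D$.
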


 \begin{proof}
 Recall from~\eqref{eq:discriminant2} that $\Delta_{q} = D \cup \{ t(q) \}$. 
  Assume first that $t(q)\notin D$. Then $X_{R q}$ is smooth. Now, since all 
  ramification points of $p|_L\colon X_{R q}\to \CP1$ 
  are of index $2$, the lift $\tilde{\gamma}$ of $\gamma$ to $X_{R q}$ is a loop if and only if $k( \gamma , q ) = 0$. 
  Let $\tilde{\zeta}_{\pm}$ be a continuous lift of $\zeta_{\pm}$ to 
  $\tilde{\gamma}$. Then, we have  
  $$
    \tilde{\zeta}_{\pm} (R q, \tilde{\gamma}(1)) = (-1)^{k( \gamma , q )} 
    \tilde{\zeta}_{\pm} (R q, \tilde{\gamma}(0)) .
  $$
  As $\vec{e}_1^l$ and $\vec{e}_2^l$ belong to the $\tilde{\zeta}_+$- and $\tilde{\zeta}_-$-eigenspaces of $\theta$ respectively over $K_l$, 
  and eigenvectors $\vec{e}_1^0$ and $\vec{e}_1^N$ in case $k( \gamma , q ) = 0$ 
  (respectively, $\vec{e}_1^0$ and $\vec{e}_2^N$ in case $k( \gamma , q ) = 1$) 
  for the eigenvalue $\tilde{\zeta}_+$ only differ by some nonzero scalar $\alpha$, we get the result. 

  In case $t(q)\in D$, i.e. $t(q) = t_j$ for some $0 \leq j \leq 4$, the curve $X_{R q}$ has an ordinary double point at $(t_j,0)$, 
  hence the form $\omega$ is unramified over $t_j$. 
  If $\gamma$ is a loop enclosing $t_j$ and no other point of $D$ then $M(\gamma )= \mbox{I}$ and $k (\gamma , q ) \equiv 0 \pmod{2}$ 
  (because the points of $D$ are counted with multiplicity), so we conclude by the equality 
  $$
     T^2 = \mbox{I}. 
  $$ 
  In the case of an arbitrary loop $\gamma$, one concludes by a combination of the above arguments.
 \end{proof}

Now, assume that $t ( q ) \notin D$. Using the notations of~\eqref{eq:Hopf}, let us set 
\begin{equation}\label{eq:tauj}
   \tau_j  =  \tau_j ( q )  = \res_{z = t_j} ( \partial_z^{\otimes 2} \angle Q ( z ) ) = \frac{a t_j - b}{\prod_{0 \leq k \leq 4, k\neq j} (t_j - t_k )} \in \C
\end{equation}
(where $\angle$ stands for contraction of tensor fields) and introduce the local holomorphic co-ordinate 
\begin{equation}\label{eq:zj}
   \tilde{z}_j = \tau_j (z- t_j). 
\end{equation}
This is indeed a local co-ordinate by the assumption that the root 
$t ( q )$ of the linear functional $az-b$ does not belong to $D$, which 
means $\tau_j \neq 0$. 
Notice that as $\tau_j$ depends continuously on $q$, there exists some $M>0$ only depending on $t_0, \ldots , t_4$ such that for all $q \in S^3_1 (\vec{0})$ and all $0\leq j \leq 4$ we have 
\begin{equation}\label{eq:bound_tauj}
    | \tau_j ( q ) | \leq M. 
\end{equation}
Then, a simple computation shows that up to holomorphic terms in $z - t_j$ we have 
$$
   \frac{\mbox{d} \tilde{z}_j ^{\otimes 2}}{\tilde{z}_j} \approx Q (z) . 
$$
We write $\tilde{z}_j = \tilde{r}_j e^{\sqrt{-1} \tilde{\varphi}_j}$ for the polar co-ordinates of the local parameter. 
With respect to these polar co-ordinates, the circle $\xi_j$ of radius $r_0$ centered at $t_j$ then has the equation 
\begin{equation}\label{eq:rj_r0}
   \tilde{r}_j = | \tau_j | r_0. 
\end{equation}
More precisely, it follows from~\eqref{eq:tauj} that we have 
$$
  \arg ( \tilde{z}_j ) = \arg ( \tau_j ) + \arg (z- t_j), 
$$
so the parameterization~\eqref{eq:xi_j} of $\xi_j$ becomes 
\begin{equation}\label{eq:xi_j_reparameterization}
   \xi_j (\tilde{\varphi}_j ) = t_j + r_0 e^{\sqrt{-1} \tilde{\varphi}_j} \quad \mbox{for} \; \tilde{\varphi}_j \in [\arg ( \tau_j ), 2\pi + \arg ( \tau_j ) ]. 
\end{equation}
 
  Let $\gamma$ denote the positively oriented simple loop around $t_j$ defined by $\tilde{r}_j = r_j$ for some fixed $0< r_j \ll 1$ chosen so that one of the connected components of $\CP1 \setminus \gamma$ 
  contains no other point of $\Delta_{q}$ than $t_j$. 
  (In the case $r_j = | \tau_j | r_0$ we get $\gamma = \xi_j$, however we are not guaranteed that for a given $q$ this choice of $r_j$ satisfies the above requirement.)
  Let us define the unit norm trivialization 
 \begin{align}\label{eq:diagonalizing_frame_singular}  
  \vec{f}_1 (\tilde{z}_j ) & = 
  \frac 1{\sqrt{ e^{2m_{\sqrt{R}}(\tilde{r}_j)} + 1}}
    \begin{pmatrix}
     e^{m_{\sqrt{R}}(\tilde{r}_j)} \\
     e^{-\sqrt{-1} \tilde{\varphi}_j/2} 
    \end{pmatrix}
  \\ 
  \vec{f}_2 (\tilde{z}_j ) & = 
  \frac 1{\sqrt{ e^{2m_{\sqrt{R}}(\tilde{r}_j)} + 1}}
    \begin{pmatrix}
     e^{m_{\sqrt{R}}(\tilde{r}_j)} \\
     - e^{-\sqrt{-1} \tilde{\varphi}_j/2}
    \end{pmatrix}
    \label{eq:diagonalizing_frame_singular2}
 \end{align}
 in the disc $\tilde{r}_j \leq r_j$ with respect to the fiducial frame~\eqref{eq:fiducial_frame_singular}.  

 \begin{prop}\label{prop:monodromy_frame2}
 Let $\gamma$ be the positive simple loop $\tilde{r}_j = r_j$. 
 \begin{enumerate}
  \item\label{prop:monodromy_frame2_part_1} We have 
  $$
    h^{\fid} (\vec{f}_1, \vec{f}_2) \to 0 \quad (R\to\infty).
  $$
  \item\label{prop:monodromy_frame2_part_2}
  The frame $\vec{f}_1, \vec{f}_2$ diagonalizes the fiducial Higgs field~\eqref{eq:fiducial_solution_singular_Higgs} with eigenvalues 
  \begin{equation}\label{eq:eigenvalues_fiducial_logarithmic}
        \pm \sqrt{R} \tilde{r}_j^{-\frac 12} e^{-\sqrt{-1} \tilde{\varphi}_j/2}\mbox{d} \tilde{z}_j , 
  \end{equation}
  where we take the determination of the angle $\tilde{\varphi}_j \in [0, 2\pi )$. 
  \item\label{prop:monodromy_frame2_part_3} The corresponding factors found in Proposition~\ref{prop:monodromy_frame1} fulfill 
  $$
    \alpha (\gamma , Rq ) = 1 = \delta (\gamma , Rq ). 
  $$
 \end{enumerate}
 \end{prop}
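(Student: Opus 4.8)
The plan is to verify the three parts essentially by direct computation with the explicit fiducial data~\eqref{eq:fiducial_solution_singular_unitary}--\eqref{eq:fiducial_solution_singular_Higgs} and the frame~\eqref{eq:diagonalizing_frame_singular}--\eqref{eq:diagonalizing_frame_singular2}, and then to match the outcome against Proposition~\ref{prop:monodromy_frame1}. First I would treat part~\eqref{prop:monodromy_frame2_part_2}: since $\theta_{\sqrt{R}}^{\fid}$ is off-diagonal with upper entry $\tilde r_j^{-1/2}e^{m_{\sqrt R}}$ and lower entry $\tilde z_j^{-1}\tilde r_j^{1/2}e^{-m_{\sqrt R}}$, I would simply apply it to the column vectors $\vec f_1,\vec f_2$ and check that they are eigenvectors. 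The eigenvalue is the square root of the product of the two off-diagonal entries times $\mathrm{d}\tilde z_j$, namely $\pm(\tilde z_j^{-1})^{1/2}\,\mathrm{d}\tilde z_j=\pm\tilde r_j^{-1/2}e^{-\sqrt{-1}\tilde\varphi_j/2}\,\mathrm{d}\tilde z_j$; the sign and the angular branch are fixed by the chosen form of $\vec f_1$ versus $\vec f_2$, which accounts for~\eqref{eq:eigenvalues_fiducial_logarithmic}. One must be a little careful that $e^{m_{\sqrt R}}>0$ so the normalization $(e^{2m_{\sqrt R}}+1)^{-1/2}$ is real, and that the branch $\tilde\varphi_j\in[0,2\pi)$ is the one that makes $\vec f_1,\vec f_2$ single-valued building blocks for the monodromy computation.

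For part~\eqref{prop:monodromy_frame2_part_1} I would compute $h^{\fid}(\vec f_1,\vec f_2)$ directly. The background metric in the fiducial frame is $h^{\fid}=\operatorname{diag}(|\tilde z_j|^{1/2}e^{m_{\sqrt R}\chi},|\tilde z_j|^{3/2}e^{m_{\sqrt R}\chi})$ (the $\su$-part is what matters for the pairing up to a common positive scalar), and plugging in the two explicit columns gives a pairing proportional to
\begin{equation*}
  \frac{1}{e^{2m_{\sqrt R}(\tilde r_j)}+1}\Bigl(|\tilde z_j|^{1/2}e^{2m_{\sqrt R}(\tilde r_j)}-|\tilde z_j|^{3/2}\Bigr)
\end{equation*}
up to the common factor $|\tilde z_j|^{1/2}e^{m_{\sqrt R}\chi}$ and $Q_j$. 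On the circle $\tilde r_j=r_j$ this is $O(r_j)$ uniformly, but more to the point, for the purpose of the statement one uses that on the region where $\chi\equiv 1$ near $\tilde z_j=0$ one has $m_{\sqrt R}(\tilde r_j)\to\infty$-type behaviour making $e^{2m_{\sqrt R}}\gg 1$; the cleanest route, though, is to observe that as $R\to\infty$, $F_{\sqrt R}\to F_\infty$ and $m_{\sqrt R}$ stabilizes away from $0$, so $\vec f_1,\vec f_2$ converge to the eigenframe of the \emph{limiting} fiducial Higgs field, which is $h_\infty$-orthonormal by construction of the limiting configuration. Hence $h^{\fid}(\vec f_1,\vec f_2)\to h_\infty(\vec f_1^\infty,\vec f_2^\infty)=0$. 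I expect the small subtlety here is keeping track of which metric ($h_{\sqrt R}^{\app}$ versus the purely fiducial model metric) one pairs with, and exploiting that they differ by the scalar $Q_j$ which drops out of the (projectivized) orthogonality statement.

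Finally, part~\eqref{prop:monodromy_frame2_part_3} follows by transporting $\vec f_1,\vec f_2$ once around $\gamma=\{\tilde r_j=r_j\}$. Both columns depend on $\tilde\varphi_j$ only through $e^{-\sqrt{-1}\tilde\varphi_j/2}$, so under $\tilde\varphi_j\mapsto\tilde\varphi_j+2\pi$ each picks up a factor $-1$; but $\gamma$ encloses exactly $t_j\in\Delta_q$ and no other point of $\Delta_q$, so $k(\gamma,q)\equiv 1\pmod 2$, the spectral cover is unramified-free along the lift only after the sheet-swap, and the $T^{k(\gamma,q)}=T$ in Proposition~\ref{prop:monodromy_frame1} exactly absorbs this sign together with the exchange $\vec f_1\leftrightarrow\vec f_2$ induced by the $\tilde\zeta_{\pm}$ monodromy. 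Concretely, writing the transported frame in terms of the original one, the off-diagonal placement coming from $T$ already carries the full $\tilde\varphi_j$-monodromy, leaving the diagonal scalars $\alpha(\gamma,Rq),\delta(\gamma,Rq)$ equal to $1$. The main obstacle I anticipate is bookkeeping: matching the branch conventions for $\tilde\varphi_j/2$, the orientation of $\gamma$, and the identification of $\vec f_1,\vec f_2$ with the $\tilde\zeta_\pm$-eigenvectors $\vec e_1^l,\vec e_2^l$ used in Proposition~\ref{prop:monodromy_frame1}, so that the residual scalars are genuinely trivial and not merely trivial up to a sign hidden in the choice of $T^{k}$. Everything else is a short explicit computation.
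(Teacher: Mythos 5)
There is a genuine gap, concentrated in part~\eqref{prop:monodromy_frame2_part_1}, and a confused (though not fatal) step in part~\eqref{prop:monodromy_frame2_part_3}.

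For part~\eqref{prop:monodromy_frame2_part_1}: the vectors $\vec f_1,\vec f_2$ in \eqref{eq:diagonalizing_frame_singular}--\eqref{eq:diagonalizing_frame_singular2} are written in the \emph{fiducial frame} \eqref{eq:fiducial_frame_singular}, which is unitary, so $h^{\fid}$ is the standard Hermitian inner product on these columns. You instead pulled in the matrix $\operatorname{diag}(|\tilde z_j|^{1/2},|\tilde z_j|^{3/2})e^{m_{\sqrt R}\chi}$, which is the metric in the \emph{holomorphic} gauge; plugging that into columns already expressed in the unitary frame double-counts the gauge change. The correct, and very short, computation is
$$
h^{\fid}(\vec f_1,\vec f_2)=\frac{e^{2m_{\sqrt R}(\tilde r_j)}-1}{e^{2m_{\sqrt R}(\tilde r_j)}+1}.
$$
Worse, when you try to argue the limit you assert that for $\tilde r_j$ fixed and $R\to\infty$ one has ``$m_{\sqrt R}(\tilde r_j)\to\infty$-type behaviour making $e^{2m_{\sqrt R}}\gg 1$.'' That has the asymptotics inverted. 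The Painlev\'e~III boundary data \eqref{eq:Bessel} depend on $\tilde r_j$ and $R$ only through $R\tilde r_j$, and for fixed $\tilde r_j=r_j>0$ and $R\to\infty$ the Bessel-function tail gives $m_{\sqrt R}(r_j)\approx \frac{1}{\pi}K_0(8\sqrt{Rr_j})\to 0$; hence $e^{2m_{\sqrt R}(r_j)}\to 1$ and the quotient above tends to $0$. Had your claim $e^{2m_{\sqrt R}}\gg1$ been correct, the quotient would tend to $1$, i.e.\ the frame would \emph{fail} to become orthogonal, which is the opposite of what you need. Your fallback appeal to the limiting configuration being $h_\infty$-orthonormal is a reasonable heuristic, but as written it does not substitute for pinning down that $m_{\sqrt R}(r_j)\to 0$: that convergence is precisely what makes $\vec f_1,\vec f_2$ tend to the $\pm$-eigenframe of the limiting fiducial Higgs field.

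For part~\eqref{prop:monodromy_frame2_part_3}: the statement that under $\tilde\varphi_j\mapsto\tilde\varphi_j+2\pi$ ``each picks up a factor $-1$'' is not what happens. Only the second entry picks up $-1$, so $\vec f_1(2\pi)=\vec f_2(0)$ and $\vec f_2(2\pi)=\vec f_1(0)$ — the vectors are swapped, not negated. This is exactly what makes $M(\gamma,Rq)=T$ (as opposed to $-\mathrm{I}$) and forces $\alpha=\delta=1$: there is no residual scalar to absorb. You arrive at the right conclusion, but the intermediate claim would, if taken literally, give a central $-\mathrm{I}$ instead of a permutation, which is a different (and wrong) monodromy. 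Part~\eqref{prop:monodromy_frame2_part_2} is fine and matches the paper's approach.
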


 \begin{proof}
  For part~\eqref{prop:monodromy_frame2_part_1}, as the vectors $\vec{f}_1, \vec{f}_2$ are 
  written in a unitary frame, we simply compute 
  $$
    h^{\fid} (\vec{f}_1, \vec{f}_2) = \frac{e^{2m_{\sqrt{R}}(\tilde{r}_j)} - 1}{e^{2m_{\sqrt{R}}(\tilde{r}_j)} + 1}.
  $$
  Now, observe that by~\eqref{eq:Bessel} we have 
  \begin{align}
    e^{m_{\sqrt{R}}(r_j)} & \approx \exp \left( \frac 1{\pi} K_0 (8\sqrt{R r_j}) \right) \notag \\
    & \approx \exp \left( \frac 1{2 \pi \sqrt{2} \sqrt[4]{R r_j}} e^{-8 \sqrt{R r_j}} \right) \notag \\
    & \to 1 \label{eq:limit_m}
 \end{align}
 as $R\to\infty$, since 
 $$
  (R r_j)^{-\frac 14} e^{-8 \sqrt{R r_j}} \to 0. 
 $$
 
  For part~\eqref{prop:monodromy_frame2_part_2}, we first need to determine the eigendirections of the fiducial Higgs field $\sqrt{R} \theta_{\sqrt{R}}^{\fid}$ with respect to the fiducial frame. 
  We need to find the eigenvalues $\lambda_{\pm}$ of 
 \begin{align*}
   & \sqrt{R}    \begin{pmatrix}
                       0 & \tilde{r}_j^{-1/2} e^{m_{\sqrt{R}} (\tilde{r}_j)} \\
                       \tilde{z}_j^{-1} \tilde{r}_j^{1/2} e^{-m_{\sqrt{R}} (\tilde{r}_j )} & 0 
                      \end{pmatrix} 
                      \mbox{d} \tilde{z}_j
                      \\
                      & = \sqrt{R}
                      \begin{pmatrix}
                       0 & \tilde{r}_j^{-1/2} e^{m_{\sqrt{R}} (\tilde{r}_j)} \\
                       \tilde{r}_j^{-1/2} e^{- \sqrt{-1} \tilde{\varphi}_j -m_{\sqrt{R}} (\tilde{r}_j)} & 0 
                      \end{pmatrix} 
                      \mbox{d} \tilde{z}_j.
 \end{align*}
 A direct computation gives that $\lambda_{\pm}$ are given by~\eqref{eq:eigenvalues_fiducial_logarithmic}, with corresponding eigenspaces spanned by unit vectors $\vec{f}_1, \vec{f}_2$ introduced 
 in~\eqref{eq:diagonalizing_frame_singular},~\eqref{eq:diagonalizing_frame_singular2}. 
 
 For part~\eqref{prop:monodromy_frame2_part_3},
 we fix $\tilde{r}_j = r_j$ and let $\tilde{\varphi}_j$ range over $[0,2\pi ]$, with a branch cut at $\tilde{\varphi}_j = 0$, and we write 
 $$
  \vec{f}_i ( \tilde{\varphi}_j ) = \vec{f}_i ( \tilde{z}_j ). 
 $$
 We find 
 \begin{align*}
  \vec{f}_1 (2\pi ) & = \frac 1{\sqrt{e^{2m_{\sqrt{R}}(\tilde{r}_j)} + 1}}
    \begin{pmatrix}
     e^{m_{\sqrt{R}}(\tilde{r}_j)} \\
     -1 
    \end{pmatrix} 
    = \vec{f}_2 (0 )
    \\
    \vec{f}_2 (2\pi ) & = \frac 1{\sqrt{e^{2m_{\sqrt{R}}(\tilde{r}_j)} + 1}}
    \begin{pmatrix}
     e^{m_{\sqrt{R}}(\tilde{r}_j)} \\
     1 
    \end{pmatrix} 
    = \vec{f}_1 (0 ).
 \end{align*}
 \end{proof}

 Finally, let us study the neighbourhood of the ramification point $t = t(q) = \frac ba$. Here, using the notation of~\eqref{eq:Hopf}, we introduce the local holomorphic co-ordinate 
 $$
  \tilde{z}_t = \left( \frac a{\prod_{j=0}^4 (t-t_j)} \right)^{\frac 13} \left( z - \frac ba \right). 
 $$
 Then, up to at least quadratic terms in $\tilde{z}_t$ we have 
 $$
  \tilde{z}_t \mbox{d} \tilde{z}_t^{\otimes 2} \approx Q (z) . 
 $$
 We then write $\tilde{z}_t = \tilde{r}_t e^{\sqrt{-1} \tilde{\varphi}_t}$ 
 for polar co-ordinates. 
 Let $\gamma$ be the simple positive loop defined by $\tilde{r}_t = r_5$ for some 
 $0< r_5 \ll 1$ so that $\gamma$ separates $t(q)$ from the logarithmic points. 
 Finally, introduce the unit length trivialization 
 \begin{align*}
  \vec{g}_1 (\tilde{z}_t ) & = 
  \frac 1{\sqrt{e^{2\ell_{\sqrt{R}}(\tilde{r}_t)} + 1}}
    \begin{pmatrix}
     e^{\ell_{\sqrt{R}}(\tilde{r}_t)} \\
      e^{\sqrt{-1} \tilde{\varphi}_t/2}
    \end{pmatrix},
  \\
   \vec{g}_2 (\tilde{z}_t ) & = 
   \frac 1{\sqrt{e^{2\ell_{\sqrt{R}}(\tilde{r}_t)} + 1}}
    \begin{pmatrix}
     e^{\ell_{\sqrt{R}}(\tilde{r}_t)} \\
     - e^{\sqrt{-1} \tilde{\varphi}_t/2}
    \end{pmatrix}
 \end{align*}
 over the disc $\tilde{r}_t \leq r_5$ with respect to the fiducial frame~\eqref{eq:fiducial_frame_apparent}. 
 
 \begin{prop}\label{prop:monodromy_frame3}
 Let $\gamma$ be a simple loop enclosing the ramification point $t(q)$ in counterclockwise direction such that the component of $\CP1 \setminus \gamma$ containing $t(q)$ contains no logarithmic point $t_j$. 
 Then the frame $\vec{g}_1, \vec{g}_2$ diagonalizes the fiducial Higgs field~\eqref{eq:fiducial_solution_apparent_Higgs}, and the corresponding factors found in Proposition~\ref{prop:monodromy_frame1} fulfill 
  $$
    \alpha (\gamma , Rq ) = 1 = \delta (\gamma , Rq ) .
  $$
\end{prop}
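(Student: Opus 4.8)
The plan is to mimic the proof of Proposition~\ref{prop:monodromy_frame2}, parts \eqref{prop:monodromy_frame2_part_2} and \eqref{prop:monodromy_frame2_part_3}, replacing the logarithmic fiducial data by the ``apparent'' fiducial data \eqref{eq:fiducial_solution_apparent_unitary}--\eqref{eq:fiducial_solution_apparent_Higgs} attached to the ramification point $t(q)$.

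First I would check the diagonalization. Writing $\tilde{z}_t = \tilde{r}_t e^{\sqrt{-1}\tilde{\varphi}_t}$, the rescaled fiducial Higgs field $\sqrt{R}\,\theta_{\sqrt{R}}^{\fid}$ of \eqref{eq:fiducial_solution_apparent_Higgs} becomes
\[
\sqrt{R}\,\tilde{r}_t^{1/2}
\begin{pmatrix}
0 & e^{\ell_{\sqrt{R}}(\tilde{r}_t)} \\
e^{\sqrt{-1}\tilde{\varphi}_t}\,e^{-\ell_{\sqrt{R}}(\tilde{r}_t)} & 0
\end{pmatrix}
\mbox{d}\tilde{z}_t ;
\]
its eigenvalues are $\pm\sqrt{R}\,\tilde{r}_t^{1/2}e^{\sqrt{-1}\tilde{\varphi}_t/2}\,\mbox{d}\tilde{z}_t$, and solving for the eigendirections gives vectors proportional to $(e^{\ell_{\sqrt{R}}(\tilde{r}_t)},\pm e^{\sqrt{-1}\tilde{\varphi}_t/2})$, i.e.\ exactly $\vec{g}_1,\vec{g}_2$ after normalising to unit length in the fiducial frame~\eqref{eq:fiducial_frame_apparent}. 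As in part~\eqref{prop:monodromy_frame2_part_1} of Proposition~\ref{prop:monodromy_frame2} one also gets $h^{\fid}(\vec{g}_1,\vec{g}_2)=\tfrac{e^{2\ell_{\sqrt{R}}(\tilde{r}_t)}-1}{e^{2\ell_{\sqrt{R}}(\tilde{r}_t)}+1}\to 0$ as $R\to\infty$ from $\ell_{\sqrt{R}}(r_5)\to 0$, so that $(\vec{g}_1,\vec{g}_2)$ is indeed the relevant unitary diagonalizing frame to be matched, on the overlap annulus $\tilde{r}_t\approx r_5$, with the abelianizing frame produced by Theorem~\ref{thm:Moc}.

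Next I would track the multivaluedness of $(\vec{g}_1,\vec{g}_2)$ along $\gamma$. Fixing $\tilde{r}_t=r_5$ and letting $\tilde{\varphi}_t$ run over $[0,2\pi]$ with a branch cut at $\tilde{\varphi}_t=0$, only the factor $e^{\sqrt{-1}\tilde{\varphi}_t/2}$ changes sign while $\ell_{\sqrt{R}}(r_5)$ is constant, whence
\[
\vec{g}_1(2\pi)=\frac{1}{\sqrt{e^{2\ell_{\sqrt{R}}(r_5)}+1}}\begin{pmatrix} e^{\ell_{\sqrt{R}}(r_5)} \\ -1 \end{pmatrix}=\vec{g}_2(0),\qquad
\vec{g}_2(2\pi)=\frac{1}{\sqrt{e^{2\ell_{\sqrt{R}}(r_5)}+1}}\begin{pmatrix} e^{\ell_{\sqrt{R}}(r_5)} \\ 1 \end{pmatrix}=\vec{g}_1(0).
\]
Thus analytic continuation around $\gamma$ interchanges $\vec{g}_1$ and $\vec{g}_2$ with no scalar factor, so the monodromy of the diagonalizing frame is exactly $T$. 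Since the component of $\CP1\setminus\gamma$ containing $t(q)$ contains precisely the one point $t(q)$ of $\Delta_q$, counted with multiplicity one (recall \eqref{eq:discriminant2} and that $t(q)\notin D$ here), we have $k(\gamma,q)=1$; Proposition~\ref{prop:monodromy_frame1} then reads $M(\gamma,Rq)=\mathrm{diag}(\alpha(\gamma,Rq),\delta(\gamma,Rq))\,T$, and comparing with $M(\gamma,Rq)=T$ forces $\alpha(\gamma,Rq)=\delta(\gamma,Rq)=1$.

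The eigenvector and monodromy computations above are a direct variant of Proposition~\ref{prop:monodromy_frame2}; the hard part will be the bookkeeping that makes the factors $\alpha,\delta$ — which are \emph{defined} through the global abelianizing gauges of Theorem~\ref{thm:Moc} away from $\Delta_q$ — actually computable from the local fiducial frame $(\vec{g}_1,\vec{g}_2)$. I would need to verify that the fiducial frame $(e_1^{\fid},e_2^{\fid})$ of \eqref{eq:fiducial_frame_apparent} is single-valued around $\gamma$, so that all the monodromy is carried by the explicit factor $e^{\sqrt{-1}\tilde{\varphi}_t/2}$, and that the gauge transformation of Theorem~\ref{thm:MSWW6.7} relating the fiducial solution to the genuine one, being exponentially close to the identity (and reducible in the limit), does not perturb the diagonal entries of the monodromy. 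Granting these compatibility statements, implicit in the cited results of~\cite{MSWW,FMSW,Moc}, the proposition follows.
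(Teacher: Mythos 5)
Your proposal is correct and follows essentially the same route as the paper, which treats this as a direct variant of Proposition~\ref{prop:monodromy_frame2}: the paper's proof is a single sentence recording the eigenvalues $\pm\sqrt{R\tilde{r}_t}\,e^{\sqrt{-1}\tilde{\varphi}_t/2}\,\mbox{d}\tilde{z}_t$ and the eigendirections $\vec{g}_1,\vec{g}_2$, leaving the monodromy bookkeeping implicit. Your write-up spells out the same computation (including the $\vec{g}_1\leftrightarrow\vec{g}_2$ exchange under $\tilde{\varphi}_t\mapsto\tilde{\varphi}_t+2\pi$ and the matching with $k(\gamma,q)=1$ in Proposition~\ref{prop:monodromy_frame1}) and flags the gluing compatibility issues that the paper also takes for granted, so there is no mismatch.
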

 
\begin{proof}
 Similar to Proposition~\eqref{prop:monodromy_frame2}, up to the following modifications: the eigenvalues of the fiducial Higgs field are 
 $\pm \sqrt{R\tilde{r}_t}e^{\sqrt{-1} \tilde{\varphi}_t/2}\mbox{d} z$, with corresponding eigendirections $\vec{g}_1, \vec{g}_2$. 
\end{proof}

\subsection{Asymptotics of complex length co-ordinates}\label{sec:asymptotic_length}

Here, we will study the behaviour of the complex length co-ordinates 
$(l_2, l_3)$ of $\Mod_{\Betti}(\vec{c}, \vec{0})$ introduced in 
Section~\ref{sec:length} for the local systems obtained by applying the non-abelian 
Hodge and Riemann--Hilbert correspondences to a Higgs bundle in a Hitchin 
fiber close to infinity. More precisely, we set 
\begin{equation}\label{eq:ti}
 l_i ( \E, \sqrt{R} \theta  ) = \tr \RH ( \nabla_{\sqrt{R}}) [ \rho_i ]. 
\end{equation}
Notice that the connection $\nabla_{\sqrt{R}}$ depends on 
$(\E, \sqrt{R} \theta )$, hence it is justified to include the dependence of 
$l_i$ on $(\E, \sqrt{R} \theta )$ in the notation. However, to lighten 
notation we will sometimes simply write $l_i$. With these notations, we will 
determine the asymptotic behaviour as $R\to \infty$ of 
$l_2 (\E, \sqrt{R} \theta ), l_3 (\E, \sqrt{R} \theta )$ for any 
$(\E, \theta ) \in H^{-1} (q)$, where $q\in S^3_1$ is fixed. 

According to Theorem~\ref{thm:Moc} for $R\gg 0$ and at any point in the complement of $ \Delta_q$ there exists a $1$-parameter family 
of frames that asymptotically diagonalize $\theta$. The family is obtained by rescaling a given frame by diagonal elements of $\Sl (2, \C )$. 
(One may additionally apply the only non-trivial element $T$ of the Weyl group, with the effect of exchanging the two trivializations of the frame.) 
It follows that there exists (up to permutation and the action of the Cartan 
subgroup $S^1 \subset \Sl(2, \C )$) a unique such orthonormal frame. For any loop $\gamma$ in $\CP1 \setminus  \Delta_q$ let us write 
\begin{equation}\label{eq:RH_entries}
   \RH ( \nabla_{\sqrt{R}}) [ \gamma ] = \begin{pmatrix}
                                                       a ( \gamma, (\E, \sqrt{R} \theta )  ) & b ( \gamma, (\E, \sqrt{R} \theta )  ) \\
                                                       c ( \gamma, (\E, \sqrt{R} \theta )  ) & d ( \gamma, (\E, \sqrt{R} \theta )  )
                                                      \end{pmatrix}
\end{equation}
with respect to this (essentially) unique orthonormal base of the fiber $V|_{\gamma(0)}$ of the underlying smooth vector bundle $V$ at $\gamma(0)$. 
Notice that the effect of the action by the Cartan subgroup 
means that the off-diagonal entries are only defined up to a common phase factor. 
Our aim in this section is to study the asymptotic behaviour of the entries of $\RH ( \nabla_{\sqrt{R}}) [ \rho_i ]$ for $2\leq i \leq 3$, and 
in particular their trace. Clealy, the set of eigenvalues (hence the trace) 
is invariant with respect to the action of the Weyl group. 
In order to achieve this, we will decompose the class of $\rho_i$ in $\pi_1 ( \CP1 \setminus  \Delta_q, \rho_i (0) )$ into a concatenation of several loops (see Figure~\ref{fig}). 
The number of loops appearing in this decomposition will be $2$ or $3$, depending on the position of the ramification point $t(q)$ with respect to the decomposition of $S$ into pairs of pants~\eqref{eq:pants}. 
Around each of the loops appearing in the decomposition we will explicitly determine the monodromy, and the monodromy around $\rho_i$ is essentially the product of the monodromies of the consituent loops. 

\begin{prop}\label{prop:flat_connection_form}
 For any fixed $q \in S^3_1 $ such that $t(q) \notin D_j$, the connection form of the flat connection associated to the fiducial solution~\eqref{eq:fiducial_solution_singular_unitary},~\eqref{eq:fiducial_solution_singular_Higgs} 
 restricted to the curve $\xi_j$ (given by $\tilde{r}_j = r_j$) with respect to the unit diagonalizing frame~\eqref{eq:diagonalizing_frame_singular},~\eqref{eq:diagonalizing_frame_singular2} 
 of the Higgs field reads as 
 $$
  \begin{pmatrix}
    \frac 34 +  2 \sqrt{-1} \sqrt{R r_j} \sin \left( \frac{\tilde{\varphi}_j}2 \right) & -  \frac 12 r_j \partial_{\tilde{r}} m_{\sqrt{R}} (r_j) \\
   - \frac 12 r_j \partial_{\tilde{r}} m_{\sqrt{R}} (r_j) &  \frac 34 -  2 \sqrt{-1} \sqrt{R r_j} \sin \left( \frac{\tilde{\varphi}_j}2 \right)
  \end{pmatrix}
  \sqrt{-1} \mbox{d} \tilde{\varphi}_j.
 $$
\end{prop}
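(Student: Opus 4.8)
The plan is to compute directly, in the fiducial frame, the connection form of the flat connection $\nabla^{\fid}_{\sqrt R} = A^{\fid}_{\sqrt R} + \partial^{h^{\fid}} + \sqrt R\,\theta^{\fid}_{\sqrt R} + \sqrt R\,(\theta^{\fid}_{\sqrt R})^{\dagger}$ restricted to the circle $\xi_j = \{\tilde r_j = r_j\}$, and then conjugate by the (unitary) change of frame from $(e_1^{\fid},e_2^{\fid})$ to $(\vec f_1,\vec f_2)$ given by \eqref{eq:diagonalizing_frame_singular},\eqref{eq:diagonalizing_frame_singular2}. First I would assemble the full flat connection associated to the fiducial solution: the unitary part is $A^{\fid}_{\sqrt R}$ from \eqref{eq:fiducial_solution_singular_unitary}, namely $\left(\tfrac14 I + F_{\sqrt R}(\tilde r_j)\,\mathrm{diag}(1,-1)\right)2\sqrt{-1}\,\d\tilde\varphi_j$; the Higgs contribution is $\sqrt R(\theta^{\fid}_{\sqrt R} + (\theta^{\fid}_{\sqrt R})^\dagger)$, whose $\d\tilde\varphi_j$-component along $\xi_j$ I would extract using $\tilde z_j = r_j e^{\sqrt{-1}\tilde\varphi_j}$, so $\d\tilde z_j = \sqrt{-1}\,\tilde z_j\,\d\tilde\varphi_j$ on the circle; the $\partial^{h^{\fid}}$-term contributes nothing new tang? — more precisely, since $r_j$ is fixed on $\xi_j$ only the angular derivatives survive, and the $\bar\partial$-part of the holomorphic structure is trivial in the fiducial frame, so only $A^{\fid}_{\sqrt R}$ and the Hermitian symmetrization of $\sqrt R\theta^{\fid}_{\sqrt R}$ matter.

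Next I would write the unitary frame change explicitly as $P = \tfrac{1}{\sqrt{e^{2m}+1}}\begin{pmatrix} e^{m} & e^{m}\\ e^{-\sqrt{-1}\tilde\varphi_j/2} & -e^{-\sqrt{-1}\tilde\varphi_j/2}\end{pmatrix}$ with $m = m_{\sqrt R}(r_j)$, and compute $P^{-1}(\nabla^{\fid}_{\sqrt R})P = P^{-1}\omega^{\fid} P + P^{-1}\d P$, where $\omega^{\fid}$ is the connection $1$-form in the fiducial frame. The term $P^{-1}\omega^{\fid}P$ should, in the diagonalizing frame, become diagonal on the Higgs part (the $\pm\sqrt R\,\tilde r_j^{-1/2}e^{-\sqrt{-1}\tilde\varphi_j/2}$ eigenvalues of Proposition~\ref{prop:monodromy_frame2}\eqref{prop:monodromy_frame2_part_2}, whose real-part symmetrization along $\xi_j$ yields the $2\sqrt{-1}\sqrt{Rr_j}\sin(\tilde\varphi_j/2)$ off-diagonal--turned--diagonal entries after multiplying by $\sqrt{-1}\d\tilde\varphi_j$), while the scalar part $\tfrac14\cdot 2\sqrt{-1}\,\d\tilde\varphi_j$ is unaffected by conjugation. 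The term $P^{-1}\d P$ is where the $\tfrac34$ on the diagonal and the $-\tfrac12 r_j\partial_{\tilde r}m_{\sqrt R}(r_j)$ off-diagonal entries are born: differentiating $P$ in $\tilde\varphi_j$ picks up the $-\tfrac{\sqrt{-1}}2 e^{-\sqrt{-1}\tilde\varphi_j/2}$ factor in the second row, and this combines with $F_{\sqrt R} = -\tfrac18 + \tfrac14\tilde r\partial_{\tilde r}m_{\sqrt R}$ from \eqref{eq:function_F} so that the $-\tfrac18\pm F_{\sqrt R}$-type combinations reorganize, via $\tfrac14 + \tfrac18 + \tfrac18 \cdot(\text{from }P^{-1}dP) = \tfrac34$ on the diagonal (the precise bookkeeping of the $\tfrac12\sqrt{-1}$ from $\d P$ against the $2\sqrt{-1}$ normalization of $A^{\fid}$), and the $\tfrac14\tilde r\partial_{\tilde r}m_{\sqrt R}$ piece of $F_{\sqrt R}$ in the off-diagonal slot becomes $-\tfrac12 r_j\partial_{\tilde r}m_{\sqrt R}(r_j)$ after the conjugation mixes the $\pm$ signs of the two rows of $P$. (The differentiation of the normalizing prefactor $(e^{2m}+1)^{-1/2}$ in $r$ produces no $\d\tilde\varphi_j$ term since $m$ depends only on $\tilde r_j$, so it does not enter $P^{-1}\d P$ along $\xi_j$.)

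I expect the main obstacle to be purely the careful sign- and factor-bookkeeping in the conjugation $P^{-1}\omega^{\fid}P + P^{-1}\d P$: tracking how the half-angle exponentials $e^{\pm\sqrt{-1}\tilde\varphi_j/2}$ transform the $\d\tilde z_j = \sqrt{-1}\tilde z_j\d\tilde\varphi_j$ Higgs contribution and the $A^{\fid}_{\sqrt R}$ scalar-plus-diagonal contribution, and in particular verifying that the off-diagonal Higgs eigenvalue terms land on the diagonal with the stated $\pm 2\sqrt{-1}\sqrt{Rr_j}\sin(\tilde\varphi_j/2)$ coefficient while the genuinely off-diagonal $-\tfrac12 r_j\partial_{\tilde r}m_{\sqrt R}(r_j)$ comes entirely from the metric/derivative terms. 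One technical point deserving care: the frame $(\vec f_1,\vec f_2)$ is unitary but the matrix $P$ above is written with respect to $(e_1^{\fid},e_2^{\fid})$, so $P^{-1} = P^{\dagger}$ up to the correct normalization, which simplifies the conjugation; I would exploit this to keep the computation tractable and to confirm the resulting form is $\su(2)$-valued plus the scalar $\tfrac34\cdot\sqrt{-1}\,\d\tilde\varphi_j\cdot I$, consistent with parabolic weight $\alpha_j^+ + \alpha_j^- = 1$ contributing the trace and the $\pm\tfrac{1}{2}$-type shift from the half-angle framing giving the $\tfrac34$. The conclusion then follows by reading off the entries.
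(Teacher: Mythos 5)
Your overall strategy coincides with the paper's: write the flat connection $\d + A^{\fid}_{\sqrt R} + \sqrt R\,\theta^{\fid}_{\sqrt R} + \sqrt R\,(\theta^{\fid}_{\sqrt R})^{\dagger}$ in the fiducial frame, then conjugate by the change-of-basis matrix $F$ (your $P$) whose columns are $\vec f_1,\vec f_2$, computing $-F^{-1}\d F + \Ad_{F^{-1}}(\cdots)$ term by term. That is exactly the paper's proof.

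However, there is a genuine error that would derail the computation: you assert that because $(\vec f_1,\vec f_2)$ is a ``unit'' frame, $P^{-1}=P^{\dagger}$ up to normalization, and you plan to exploit this and to confirm that the answer is $\tfrac34\sqrt{-1}\,\d\tilde\varphi_j\cdot I$ plus an $\su(2)$-valued piece. But the frame is only \emph{unit-length}, not \emph{orthonormal}: the paper itself shows in Proposition~\ref{prop:monodromy_frame2}\eqref{prop:monodromy_frame2_part_1} that
$h^{\fid}(\vec f_1,\vec f_2) = (e^{2m_{\sqrt R}(r_j)}-1)/(e^{2m_{\sqrt R}(r_j)}+1)\neq 0$, so $F$ is not unitary at any finite $R$. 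Consequently $F^{-1}\neq F^{\dagger}$ (the paper computes the honest inverse via $\det F$), the conjugated connection form is \emph{not} of the form scalar plus $\su(2)$ (and indeed the stated answer isn't: its traceless part has purely imaginary diagonal and real symmetric off-diagonal, which is not anti-Hermitian), and using $F^{\dagger}$ in place of $F^{-1}$ would produce a different, wrong matrix. In fact the non-orthogonality of the frame at finite $R$ is precisely the source of the off-diagonal entry $-\tfrac12 r_j\partial_{\tilde r}m_{\sqrt R}(r_j)$: it arises from $\Ad_{F^{-1}}$ moving the diagonal $F_{\sqrt R}\,\mathrm{diag}(1,-1)$ part of $A^{\fid}_{\sqrt R}$ off the diagonal, combined with the off-diagonal part of $F^{-1}\d F$; shortcutting with a unitary $F$ would lose it. The diagonal scalar bookkeeping you indicate ($\tfrac14+\tfrac18+\tfrac18$, which in any case equals $\tfrac12$, not $\tfrac34$) is also off; the correct accounting is $\tfrac12$ from the central part of $A^{\fid}_{\sqrt R}$ plus $\tfrac14$ from the diagonal of $-F^{-1}\d F$, and the off-diagonal collapses via the identity $2F_{\sqrt R}(r_j)+\tfrac14=\tfrac12 r_j\partial_{\tilde r}m_{\sqrt R}(r_j)$.
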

\begin{proof}
 This follows from a straightforward computation. Set 
 $$
  F = \frac 1{\sqrt{e^{2m_{\sqrt{R}}(r_j)} + 1}}
    \begin{pmatrix}
     e^{m_{\sqrt{R}}(r_j)} &  e^{m_{\sqrt{R}}(r_j)} \\
     e^{-\sqrt{-1} \tilde{\varphi}_j/2} & - e^{-\sqrt{-1} \tilde{\varphi}_j/2}
    \end{pmatrix}
 $$
 for the matrix formed by the restrictions of the column vectors~\eqref{eq:diagonalizing_frame_singular},~\eqref{eq:diagonalizing_frame_singular2} to $\xi_j$, with determinant 
 $$
  \det (F) = - \frac{2 e^{m_{\sqrt{R}}(r_j) - \sqrt{-1} \tilde{\varphi}_j/2}}{e^{2m_{\sqrt{R}}(\tilde{r}_j)} + 1}
 $$
 and inverse matrix given by 
 $$
  F^{-1} = \frac{\sqrt{e^{2m_{\sqrt{R}}(r_j)} + 1}}{2 e^{m_{\sqrt{R}}(r_j) - \sqrt{-1} \tilde{\varphi}_j/2}}
    \begin{pmatrix}
       e^{-\sqrt{-1} \tilde{\varphi}_j/2} &  e^{m_{\sqrt{R}}(r_j)} \\
      e^{-\sqrt{-1} \tilde{\varphi}_j/2} & - e^{m_{\sqrt{R}}(r_j)}
    \end{pmatrix}. 
 $$
 Since on $\xi_j$ we have $\mbox{d} \tilde{r}_j = 0$, we need to compute the $\mbox{d} \tilde{\varphi}_j$-part of 
 $$
  F^{-1} \cdot ( \mbox{d} + A_{\sqrt{R}}^{\fid} + \theta_{\sqrt{R}}^{\fid} + (\theta_{\sqrt{R}}^{\fid})^{\dagger } ) = 
  - F^{-1} \mbox{d} F +  \mbox{Ad}_{F^{-1}} ( A_{\sqrt{R}}^{\fid} + \theta_{\sqrt{R}}^{\fid} + (\theta_{\sqrt{R}}^{\fid})^{\dagger } ) .
 $$
 Note first that $\mbox{Ad}_{F^{-1}}$ acts trivially on the central part of ~\eqref{eq:fiducial_solution_singular_unitary}. On the other hand, a computation shows that 
 \begin{align*}
  & F^{-1} \begin{pmatrix}
          F_{\sqrt{R}}(r_j) & 0 \\
          0 & - F_{\sqrt{R}}(r_j)
         \end{pmatrix}
  F \\
  & = \frac 1{2 e^{m_{\sqrt{R}}(r_j) - \sqrt{-1} \tilde{\varphi}_j/2}}
  \begin{pmatrix}
   0 & - 2  e^{m_{\sqrt{R}}(r_j) - \sqrt{-1} \tilde{\varphi}_j/2} F_{\sqrt{R}}(r_j) \\
   - 2 e^{m_{\sqrt{R}}(r_j) - \sqrt{-1} \tilde{\varphi}_j/2} F_{\sqrt{R}}(r_j) & 0
  \end{pmatrix} 
  \\
  & = 
   \begin{pmatrix}
   0 & - F_{\sqrt{R}}(r_j) \\
   - F_{\sqrt{R}}(r_j) & 0
  \end{pmatrix} .
 \end{align*}
 By Proposition~\ref{prop:monodromy_frame2}, $\mbox{Ad}_{F^{-1}} ( \theta_{\sqrt{R}}^{\fid} + (\theta_{\sqrt{R}}^{\fid})^{\dagger })$ is diagonal with eigenvalues given by 
 $$
  \pm \sqrt{R r_j} \sqrt{-1}  ( e^{\sqrt{-1} \tilde{\varphi}_j/2}  - e^{-\sqrt{-1} \tilde{\varphi}_j/2}  )  \mbox{d} \tilde{\varphi}_j  = \mp 2 \sqrt{R r_j}  \sin \left( \frac{\tilde{\varphi}_j}2 \right) \mbox{d} \tilde{\varphi}_j . 
 $$
 Lastly, restricted to $\xi_j$ we find 
 \begin{align*}
  F^{-1} \mbox{d} F = & \frac 1{2 e^{m_{\sqrt{R}}(r_j) - \sqrt{-1} \tilde{\varphi}_j/2}}
    \begin{pmatrix}
      e^{-\sqrt{-1} \tilde{\varphi}_j/2} &   e^{m_{\sqrt{R}}(r_j)} \\
      e^{-\sqrt{-1} \tilde{\varphi}_j/2} & - e^{m_{\sqrt{R}}(r_j)}
    \end{pmatrix} \\
    & \cdot 
    \begin{pmatrix}
     0 & 0 \\
     - \frac{\sqrt{-1}}2 e^{-\sqrt{-1} \tilde{\varphi}_j/2} & \frac{\sqrt{-1}}2 e^{-\sqrt{-1} \tilde{\varphi}_j/2}
    \end{pmatrix}
    \mbox{d} \tilde{\varphi}_j 
    \\
    = & \frac{\sqrt{-1}}4 
        \begin{pmatrix}
         - 1 & 1 \\
         1 & - 1
        \end{pmatrix}
       \mbox{d} \tilde{\varphi}_j . 
 \end{align*}
 We conclude combining the above computations and using the identity (see~\eqref{eq:function_F}) 
 $$
  2 F_{\sqrt{R}}(r_j) + \frac 14 = \frac 12 r_j \partial_{\tilde{r}} m_{\sqrt{R}} (r_j).
 $$
\end{proof}

The behaviour of the entries of the matrix~\eqref{eq:RH_entries} for $R\gg 0$ and the choice $\gamma = \xi_j$ is given by the following. 

\begin{prop}\label{prop:asymptotic_RH}
 Fix any $q \in S^3_1 $ and consider the loop $\gamma = \xi_j$. 
 \begin{enumerate}
  \item \label{prop:asymptotic_RH1}
 The behaviour of the diagonal entries of~\eqref{eq:RH_entries} as $R \to \infty$ is given by the limits  
 \begin{align*}
  a ( \xi_j, (\E, \sqrt{R} \theta )  ) & \to 0 \\
  d ( \xi_j, (\E, \sqrt{R} \theta )  ) & \to 0 
 \end{align*}
 as $R \to \infty$, at exponential rate in $\sqrt{R}$. 
  \item \label{prop:asymptotic_RH2}
  The behaviour of the off-diagonal entries of~\eqref{eq:RH_entries} as $R \to \infty$ is given by the limits 
 \begin{align*}
   b ( \xi_j, (\E, \sqrt{R} \theta )  )e^{ 8 \Re \sqrt{\tau_j} \sqrt{R r_0}} & \to \sqrt{-1}  \\
   c ( \xi_j, (\E, \sqrt{R} \theta )  )e^{-8 \Re \sqrt{\tau_j} \sqrt{R r_0}} & \to \sqrt{-1} 
 \end{align*}
 where $\tau_j$ is defined in~\eqref{eq:tauj} and $r_0 > 0$ is the radius of $\xi_j$ in the Euclidean metric. 
 \end{enumerate}
\end{prop}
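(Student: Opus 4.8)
The plan is to compute the monodromy of $\RH(\nabla_{\sqrt R})$ around $\xi_j$ by working on the disc $\tilde r_j\le r_j$ where the fiducial solution of~\cite{FMSW} provides an explicit model, and then transferring the result to the loop $\xi_j$ itself (radius $r_0$ in the Euclidean metric, i.e. $\tilde r_j=|\tau_j|r_0$ in the rescaled coordinate). By Theorem~\ref{thm:MSWW6.7} the genuine solution $h_{\sqrt R}$ differs from the approximate solution $h_{\sqrt R}^{\app}$ — which near $t_j$ is built from the fiducial solution~\eqref{eq:fiducial_solution_singular_unitary},~\eqref{eq:fiducial_solution_singular_Higgs} — by a gauge term of size $O(e^{-(\mu/2)\sqrt R})$ in a $\mathcal C^{2,\alpha}_b$-norm, hence the flat connection $\nabla_{\sqrt R}$ is $O(e^{-c\sqrt R})$-close to $\nabla_{\sqrt R}^{\app}$; since holonomy depends continuously on the connection form, it suffices to compute the holonomy of the fiducial flat connection around $\xi_j$, up to an exponentially small error that will be absorbed into the stated limits. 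This reduces everything to an ODE computation with the connection form written down explicitly in Proposition~\ref{prop:flat_connection_form}.

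Concretely, I would integrate the connection $\d + \Omega$, where by Proposition~\ref{prop:flat_connection_form} the matrix $\Omega$ restricted to $\tilde r_j=r_j$ in the unit diagonalizing frame $\vec f_1,\vec f_2$ of~\eqref{eq:diagonalizing_frame_singular},~\eqref{eq:diagonalizing_frame_singular2} equals
$$
  \begin{pmatrix}
    \tfrac34 + 2\sqrt{-1}\sqrt{Rr_j}\sin\!\big(\tfrac{\tilde\varphi_j}{2}\big) & -\tfrac12 r_j\partial_{\tilde r}m_{\sqrt R}(r_j) \\
    -\tfrac12 r_j\partial_{\tilde r}m_{\sqrt R}(r_j) & \tfrac34 - 2\sqrt{-1}\sqrt{Rr_j}\sin\!\big(\tfrac{\tilde\varphi_j}{2}\big)
  \end{pmatrix}\sqrt{-1}\,\d\tilde\varphi_j .
$$
By~\eqref{eq:Bessel}–\eqref{eq:function_F} the off-diagonal entry $\tfrac12 r_j\partial_{\tilde r}m_{\sqrt R}(r_j)$ decays exponentially in $\sqrt R$, so to leading order the system is diagonal. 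The diagonal part integrates to the scalar exponential of $\sqrt{-1}\int_0^{2\pi}\big(\tfrac34\pm 2\sqrt{-1}\sqrt{Rr_j}\sin(\tilde\varphi_j/2)\big)\d\tilde\varphi_j$; the $\tfrac34$ contributes a unit-modulus phase $e^{3\pi\sqrt{-1}/2}=-\sqrt{-1}$, while $\int_0^{2\pi}\sin(\tilde\varphi_j/2)\,\d\tilde\varphi_j=4$ gives the real exponential factors $e^{\mp 8\sqrt{Rr_j}}$. Combining this with the monodromy of the diagonalizing \emph{frame} itself: by part~\eqref{prop:monodromy_frame2_part_3} of Proposition~\ref{prop:monodromy_frame2} the frame picks up the transposition $T$ around $\xi_j$ (since $k(\xi_j,q)$ is odd for a loop enclosing one logarithmic point — here one must be careful, the count includes the ramification point $t(q)$ only if it lies inside, but for $\xi_j$ of small radius it does not by hypothesis $t(q)\notin D_j$, so the single point $t_j$ gives $k\equiv 1$), the holonomy of $\RH(\nabla_{\sqrt R})$ in the orthonormal frame of~\eqref{eq:RH_entries} has the off-diagonal block structure $\begin{pmatrix}0&b\\c&0\end{pmatrix}$ to leading order, with the diagonal entries $a,d$ being the exponentially small off-diagonal-of-$\Omega$ contributions. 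This yields part~\eqref{prop:asymptotic_RH1} directly. For part~\eqref{prop:asymptotic_RH2}, one reads off $b\sim -\sqrt{-1}\cdot(\text{phase})\cdot e^{-8\sqrt{Rr_j}}$ and $c\sim -\sqrt{-1}\cdot(\text{phase})\cdot e^{+8\sqrt{Rr_j}}$, and substituting $\tilde r_j=r_j$ corresponding to the Euclidean radius via $\sqrt{Rr_j}=\Re\sqrt{\tau_j}\sqrt{Rr_0}$ (from $\tilde z_j=\tau_j(z-t_j)$, so that $\sqrt{\tilde z_j}$ on $\xi_j$ contributes $\sqrt{\tau_j r_0}$ and the relevant real part appears) gives the factors $e^{\mp 8\Re\sqrt{\tau_j}\sqrt{Rr_0}}$; a short check of the phase — tracking $e^{-\sqrt{-1}\tilde\varphi_j/2}$ in $\vec f_1,\vec f_2$ together with the $-\sqrt{-1}$ from the $\tfrac34$-term and a possible sign from $T$ — pins the limit to $\sqrt{-1}$ in both cases, as claimed. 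The phases being only defined up to a common factor (Cartan action) is consistent with $b,c$ being individually subject to a common phase, while the \emph{product} $bc\to -1$ is canonical.

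The main obstacle I anticipate is not the ODE integration, which is essentially diagonal plus exponentially small corrections, but rather the \emph{bookkeeping of the transition between frames and radii}: carefully matching the fiducial frame~\eqref{eq:fiducial_frame_singular} on the disc to the globally defined (up to Weyl and Cartan) orthonormal frame of~\eqref{eq:RH_entries} via the gauge $g_{\sqrt R}$ of Theorem~\ref{thm:Moc} where the two descriptions overlap (on an annulus $\tfrac12<|\tilde z_j|\ldots$, where the cutoff $\chi$ transitions and $h^{\app}_{\sqrt R}=h_{\E,\infty}$), and verifying that this matching introduces only an $S^1$-ambiguity and an exponentially small error, not a change in the leading exponential rate or the limiting value $\sqrt{-1}$. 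A secondary subtlety is justifying the passage from the radius $r_j$ (chosen so the loop isolates $t_j$ from $t(q)$, which may force $r_j\ll r_0$) to the actual loop $\xi_j$ of radius $r_0$: one must argue, using Theorem~\ref{thm:Moc} on the intervening annulus $r_j\le\tilde r_j\le|\tau_j|r_0$ away from $\Delta_q$, that the holonomy along the annulus contributes the difference of the real exponential factors at the two radii — i.e. that the integral of $2\Re Z_\pm$ along radial paths accounts for exactly the factor $e^{\mp 8(\sqrt{Rr_0}\Re\sqrt{\tau_j}-\sqrt{Rr_j})}$ — so that the final answer is expressed in terms of the Euclidean radius $r_0$ as stated. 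These are routine but must be done with care; the exponential rate $8\Re\sqrt{\tau_j}\sqrt{Rr_0}$ should ultimately be identified with (twice the real part of) the period integral $\int Z_+$ along a path from $t(q)$-side to $\xi_j$, tying the computation to the Liouville-form picture emphasised in the introduction.
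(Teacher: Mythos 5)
Your overall strategy — reduce to the fiducial flat connection via Theorem~\ref{thm:MSWW6.7}, integrate the connection form of Proposition~\ref{prop:flat_connection_form} over $\xi_j$, and then multiply by the frame-monodromy $T$ from Propositions~\ref{prop:monodromy_frame1}--\ref{prop:monodromy_frame2} — is the same as the paper's, and most of the frame-bookkeeping worries you flag at the end are handled exactly as you anticipate. However there is a genuine gap in the central computation.

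You integrate $\int_0^{2\pi}\sin(\tilde\varphi_j/2)\,\d\tilde\varphi_j=4$ and obtain $e^{\mp 8\sqrt{Rr_j}}$, then "substitute" $\sqrt{Rr_j}=\Re\sqrt{\tau_j}\sqrt{Rr_0}$ to reach the stated answer. That substitution is false in general: the loop $\xi_j$ has $\tilde r_j=|\tau_j|r_0=r_j$, so $\sqrt{Rr_j}=\sqrt{|\tau_j|}\sqrt{Rr_0}$, whereas $\Re\sqrt{\tau_j}\sqrt{Rr_0}=\sqrt{|\tau_j|}\cos(\arg(\tau_j)/2)\sqrt{Rr_0}$; these coincide only when $\arg\tau_j=0$. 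The missing ingredient is the reparameterization~\eqref{eq:xi_j_reparameterization}: passing from the chart $z$ to $\tilde z_j=\tau_j(z-t_j)$ shifts the angular variable so that $\xi_j$ is traced out as $\tilde\varphi_j$ runs over $[\arg\tau_j,\,2\pi+\arg\tau_j]$, not $[0,2\pi]$. With the correct endpoints the integral of $\sin(\tilde\varphi_j/2)$ is $4\cos(\arg(\tau_j)/2)$, which produces the factor $\upsilon_j=8\cos(\arg(\tau_j)/2)$; only then does $\upsilon_j\sqrt{Rr_j}=8\Re\sqrt{\tau_j}\sqrt{Rr_0}$ hold as an identity. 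Without this you get the wrong exponential rate (indeed $8\sqrt{|\tau_j|}$ is always $\ge 8\Re\sqrt{\tau_j}$, strictly so unless $\tau_j>0$), and the statement would not follow.

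A second, related weakness: you dismiss the off-diagonal entry $B$ of the integrated connection as "exponentially small, so to leading order the system is diagonal." Dropping $B$ before exponentiating is not justified: the relevant matrix is $\begin{pmatrix}A-C&B\\B&A+C\end{pmatrix}$ with $C\sim\sqrt R$ large, and the off-diagonals of its exponential are of order $(B/C)\sinh C\sim B\,e^{|C|}/C$, not $B$. These are indeed small, but only because $|\upsilon_j|\le 8$ — i.e. because $B\sim e^{-8\sqrt{Rr_j}}$ decays at least as fast as $e^{-|C|}=e^{-|\upsilon_j|\sqrt{Rr_j}}$ grows. This bound is precisely the $\cos$ factor you omitted, so the two errors compensate superficially but both must be fixed. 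The paper carries out the full $2\times 2$ matrix exponential (equation~\eqref{eq:exponential_of_integral}) and estimates every entry; you should do the same rather than appeal to "leading order the system is diagonal." Finally, the case $\Re\sqrt{\tau_j}=0$ (where the matrix has equal diagonal entries and the above decomposition degenerates) is not covered by your argument and must be treated separately, as the paper does; there the integrated connection is $\sqrt{-1}$ times a real symmetric matrix with bounded entries and the conclusion follows by continuity.

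One remark on your anticipated obstacle of "passing from radius $r_j$ to $r_0$": once you set $r_j=|\tau_j|r_0$ this is not a separate step — the loop of radius $\tilde r_j=r_j$ \emph{is} $\xi_j$ — so there is no annulus to cross and no additional contribution from Theorem~\ref{thm:Moc} along radial paths.
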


\begin{proof}
Let us set $r_j = | \tau_j | r_0$ (see~\eqref{eq:rj_r0}). 
Recall the reparameterization~\eqref{eq:xi_j_reparameterization} of $\xi_j$ with respect to the polar co-ordinates of the local holomorphic chart~\eqref{eq:zj}. 
Integrating the connection form of the flat connection found in Proposition~\ref{prop:flat_connection_form} from $\arg ( \tau_j )$ to $2\pi + \arg ( \tau_j )$ with respect to $\tilde{\varphi}_j$ we find the matrix 
 \begin{equation}\label{eq:integral_connection_form}
  \begin{pmatrix}
   \sqrt{-1} \frac{3 \pi}2 - 8 \cos \left( \frac{\arg ( \tau_j )}2 \right) \sqrt{R r_j} & - \sqrt{-1} \pi r_j \partial_{\tilde{r}} m_{\sqrt{R}} (r_j) \\
  - \sqrt{-1} \pi r_j \partial_{\tilde{r}} m_{\sqrt{R}} (r_j) & \sqrt{-1} \frac{3 \pi}2 + 8 \cos \left( \frac{\arg ( \tau_j )}2 \right) \sqrt{R r_j}
  \end{pmatrix}.
 \end{equation}   
There are two cases to consider depending on whether $\Re \sqrt{\tau_j} = 0$ or $\Re \sqrt{\tau_j} \neq 0$. 

We first treat the case $\Re \sqrt{\tau_j} \neq 0$. 
This condition is equivalent to the pair of conditions $|\tau_j|\neq 0$ (equivalently, $t(q) \notin D$) and  
$$
  \cos \left( \frac{\arg ( \tau_j )}2 \right) \neq 0, 
$$
equivalently $\arg(\tau_j ) \neq \pi + 2k \pi$ for $k\in\Z$. 
 The matrix~\eqref{eq:integral_connection_form} is then of the form 
 $$
  \begin{pmatrix}
   A - C & B \\
   B & A + C
  \end{pmatrix}
 $$
 with 
  \begin{align*}
    A & = A_j = \sqrt{-1} \frac{3 \pi}2 \\
    B & = B_j = - \sqrt{-1} \pi r_j \partial_{\tilde{r}} m_{\sqrt{R}} (r_j), \\
    C & = C_j = 8 \cos \left( \frac{\arg ( \tau_j )}2 \right) \sqrt{R r_j} = 8 \Re \sqrt{\tau_j} \sqrt{R r_0} \neq 0, 
 \end{align*}
 where we have used~\eqref{eq:rj_r0} in the third line. 
 A straightforward computation shows that setting $D = \sqrt{B^2 + C^2}$ the exponential of the negative of the above matrix is 
 \begin{equation}\label{eq:exponential_of_integral}
  \frac{-\sqrt{-1}}{2 D}
  \begin{pmatrix}
   ( - C + D ) e^{-D} +  ( C + D ) e^{D} & - 2 B \sinh D \\
   - 2 B \sinh D & ( C + D ) e^{-D} +  ( - C + D ) e^{D}
  \end{pmatrix}.  
 \end{equation} 
 According to~\eqref{eq:Bessel} as $R\to \infty$ with $r_j$ freezed, we have 
 \begin{align}
  B \frac{e^{8 \sqrt{R r_j}}}{\sqrt{\pi}\sqrt[4]{R r_j}} & \to \sqrt{-1} \label{eq:asymptotics_B} \\  
  \frac C{\sqrt{R r_j}} & \to \upsilon_j  \label{eq:asymptotics_C}
 \end{align}
 where we have set 
 \begin{equation}\label{eq:upsilon}
   \upsilon_j = 8 \cos \left( \frac{\arg ( \tau_j )}2 \right) \in [-8,8] \setminus \{ 0 \}. 
 \end{equation}
 As a consequence we find 
 $$
    D \approx |C|. 
 $$
  Moreover, notice that up to higher order terms we have 
 $$
  \sqrt{B^2 + C^2} = C \sqrt{1 + \frac{B^2}{C^2}} \approx C \left( 1 + \frac{B^2}{2 C^2} \right), 
 $$
 implying 
 $$
  \frac{-C + \sqrt{B^2 + C^2}}{2\sqrt{B^2 + C^2}} e^{\pm \sqrt{B^2 + C^2}} \approx \frac{B^2}{4 C^2} e^{\pm C} \approx -\frac{\pi}{4 \upsilon_j^2} \frac{e^{( -16 \pm \upsilon_j) \sqrt{R r_j}}}{\sqrt{R r_j}}.
 $$
 We infer that the leading order terms of the matrix~\eqref{eq:exponential_of_integral} are equal to 
 \begin{align*}
  & \sqrt{-1} 
  \begin{pmatrix}
    \frac{B^2}{4 C^2} e^{-C} +  e^C & - \frac BC  e^{|C|} \\
   - \frac BC e^{|C|} & e^{-C} +  \frac{B^2}{4 C^2} e^C 
  \end{pmatrix}
  \\
  & \approx - \sqrt{-1} \cdot \\
  & \cdot 
    \begin{pmatrix}
     \frac{\pi}{4 \upsilon_j^2 \sqrt{R r_j}} e^{( -16 - \upsilon_j) \sqrt{R r_j}} - e^{\upsilon_j \sqrt{R r_j} } & \frac{\sqrt{-1}\sqrt{\pi}e^{-(8  - |\upsilon_j| ) \sqrt{R r_j}}}{\upsilon_j \sqrt[4]{R r_j}} \\
     \frac{\sqrt{-1}\sqrt{\pi}e^{-(8  - |\upsilon_j| ) \sqrt{R r_j}}}{\upsilon_j \sqrt[4]{R r_j}} & - e^{- \upsilon_j \sqrt{R r_j}} + \frac{\pi}{4 \upsilon_j^2 \sqrt{R r_j}} e^{( -16 + \upsilon_j) \sqrt{R r_j}}
    \end{pmatrix}
 \end{align*}
 This matrix describes the action of $\RH ( \nabla_{\sqrt{R}}) [ \xi_j ]$ with respect to the bases 
 $$
  \vec{f}_1 (0 ), \vec{f}_2 (0 ) \quad \mbox{and} \quad \vec{f}_1 (2\pi ), \vec{f}_2 (2\pi )
 $$
 of the fiber $V|_{\xi_j (0)} = V|_{\xi_j (1)}$. In order to find the matrix of $\RH ( \nabla_{\sqrt{R}}) [ \xi_j ]$ with respect to the single basis $\vec{f}_1 (0 ), \vec{f}_2 (0 )$, 
 we need to multiply the above matrix from the left by the inverse of $M(\xi_j , Rq )$. By Propositions~\ref{prop:monodromy_frame1} and~\ref{prop:monodromy_frame2}, 
 $M(\xi_j , Rq ) = T$ and the product is 
 \begin{align}	
  & \RH ( \nabla_{\sqrt{R}}) [ \xi_j ] = - \sqrt{-1} \cdot \label{eq:RH_xi} \\
  & \cdot \begin{pmatrix}
     \frac{\sqrt{-1}\sqrt{\pi}e^{-(8  - |\upsilon_j| ) \sqrt{R r_j}}}{\upsilon_j \sqrt[4]{R r_j}} & - e^{- \upsilon_j \sqrt{R r_j}} + \frac{\pi}{4 \upsilon_j^2 \sqrt{R r_j}} e^{( -16 + \upsilon_j) \sqrt{R r_j}} \\
     \frac{\pi}{4 \upsilon_j^2 \sqrt{R r_j}} e^{( -16 - \upsilon_j) \sqrt{R r_j}} - e^{\upsilon_j \sqrt{R r_j} } & \frac{\sqrt{-1}\sqrt{\pi}e^{-(8  - |\upsilon_j| ) \sqrt{R r_j}}}{\upsilon_j \sqrt[4]{R r_j}}
    \end{pmatrix}.\notag
 \end{align}
 By~\eqref{eq:upsilon} we have $8  - |\upsilon_j| \geq 0$, whence we immediately get part~\eqref{prop:asymptotic_RH1} (in the case $8  - |\upsilon_j| = 0$, the assertion follows from the factor $\sqrt[4]{R}$ in the denominator). 
 On the other hand,~\eqref{eq:upsilon} also shows that $-16 - \upsilon_j \leq \upsilon_j$, with equality if and only if $\cos \left( \frac{\arg ( \tau_j )}2 \right) = - 1$. 
 In the case where  $-16 - \upsilon_j < \upsilon_j$, the first term of $c ( \xi_j, (\E, \sqrt{R} \theta )  )$ is negligible compared to the second one, and we get~\eqref{prop:asymptotic_RH2} for $c ( \xi_j, (\E, \sqrt{R} \theta )  )$. 
 In case $\cos \left( \frac{\arg ( \tau_j )}2 \right) = - 1$, the exponential factors in the two terms of $c ( \xi_j, (\E, \sqrt{R} \theta )  )$ agree, however the polynomial term in $R$ converges to $0$ 
 for the first term, while is constant for the second term, again implying~\eqref{prop:asymptotic_RH2} for $c ( \xi_j, (\E, \sqrt{R} \theta )  )$. 
 A similar argument may be applied to get~\eqref{prop:asymptotic_RH2} for $b ( \xi_j, (\E, \sqrt{R} \theta )  )$. 

 We now turn to the case $\Re \sqrt{\tau_j} = 0$. In this case,~\eqref{eq:integral_connection_form} simplifies to 
$$
  \sqrt{-1}
   \begin{pmatrix}
     \frac{3 \pi}2 & -  \pi r_j \partial_{\tilde{r}} m_{\sqrt{R}} (r_j) \\
  - \pi r_j \partial_{\tilde{r}} m_{\sqrt{R}} (r_j) & \frac{3 \pi}2
  \end{pmatrix}.
$$
The diagonal entries of this matrix are constant, and its off-diagonal ones converge to $0$ as $R \to \infty$. 
By continuity, the matrix exponential of the negative of this matrix converges to 
$$
  \sqrt{-1}
  \begin{pmatrix}
   1 & 0 \\
   0 & 1
  \end{pmatrix}. 
$$
In order to obtain the matrix of $\RH ( \nabla_{\sqrt{R}}) [ \xi_j ]$ with respect to the single basis 
$\vec{f}_1 (0 ), \vec{f}_2 (0 )$, we again need to multiply by the transposition matrix $T$, and this gives the 
desired formulas. 
\end{proof}

Next, we will consider the loop based at $x_2$ 
$$
  \zeta_2 = \eta_1 \ast \xi_1 \ast \eta_1^{-1} \ast \eta_2 \ast \xi_2 \ast \eta_2^{-1}
$$
enclosing the punctures $t_1, t_2$ once in counterclockwise direction. Clearly, the classes 
$$
 [\rho_2 ]\in \pi_1 ( \CP1 \setminus D, s_2  )  \quad \mbox{and} \quad [\zeta_2 ] \in \pi_1 ( \CP1 \setminus D, x_2  )
$$
are conjugate to each other by $\psi_2$, so that $\RH ( \nabla_{\sqrt{R}}) [ \zeta_2 ]$ is conjugate in $\Sl (2, \C)$ to $\RH ( \nabla_{\sqrt{R}}) [ \rho_2 ]$ 
by the parallel transport map of $\nabla_{\sqrt{R}}$ along $\psi_2$. In particular, this implies that the trace of $\RH ( \nabla_{\sqrt{R}}) [ \zeta_2 ]$ 
agrees with the trace $l_2 (\E, \sqrt{R} \theta )$ of $\RH ( \nabla_{\sqrt{R}}) [ \rho_2 ]$. 
Our next aim is to compute the asymptotic behaviour of the coefficients of $\RH ( \nabla_{\sqrt{R}}) [ \zeta_2 ] $ as $R \to \infty$. 
In order to state the result, we need some preparation. 
We will consider the part lying between $t_j$ and $t_j + r_j$ of the ray emanating out from $t_j$ with direction parallel to the positive real line, and denote this path by $\sigma_j$ (see Figure~\ref{fig2}). 
We then set for $j\in \{1 , 2 \}$ 
\begin{equation}\label{eq:pij}
   \pi_j = \pi_j (q) = \int_{x_2}^{t_j} Z_+ (q, z), 
\end{equation}
the contour of the line integral being $\eta_j * \sigma_j^{-1}$. 
Notice that this is a convergent improper integral; indeed, by~\eqref{eq:Z_tilde2} and~\eqref{eq:Hopf} the integrand grows as $|z-t_j|^{-\frac 12}$ near $t_j$. 
We will indicate the dependence of $\pi_j$ on $q$ whenever we vary it. 
Notice that by its definition~\eqref{eq:Z_tilde2}, $Z_+$ (hence $\pi_j$) is only defined up to a sign. 
We take $Z_+$ to be the square root that is the continuous extension to $\eta_j$ of the square root corresponding to the negative sign  in~\eqref{eq:eigenvalues_fiducial_logarithmic}. 
We will return to our choice of sign in~\eqref{eq:sign_choice}.

\begin{prop}\label{prop:monodromy_zeta2}
Fix $q \in S^3_1 $ and consider the loop $\gamma = \rho_2$. 
In case $\Re ( \pi_1 - \pi_2) \neq 0$ we have the limit 
$$
  l_2 (\E, \sqrt{R} \theta )^{-1} 
 2 \cosh \left( 2 \int_{\eta_2 - \eta_1}  B_{\L_{\E }} + 4 \sqrt{R} \Re ( \pi_2 - \pi_1) \right) 
  \to - 1 
$$
as $R\to\infty$. 
In case $\Re ( \pi_1 - \pi_2)  =0$ the limit of $l_2 ( \E, \sqrt{R} \theta  )$ as $R \to \infty$ exists and is finite. 
\end{prop}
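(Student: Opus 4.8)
### Proof proposal for Proposition~\ref{prop:monodromy_zeta2}

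The plan is to compute the monodromy $\RH(\nabla_{\sqrt{R}})[\zeta_2]$ as an explicit ordered product of six factors coming from the pieces $\eta_1$, $\xi_1$, $\eta_1^{-1}$, $\eta_2$, $\xi_2$, $\eta_2^{-1}$ of the loop $\zeta_2$, and then extract the trace. First I would fix the (essentially unique) orthonormal diagonalizing frame furnished by Theorem~\ref{thm:Moc} along the portions of $\zeta_2$ that stay away from $\Delta_q$, namely along the concatenations $\eta_j\ast\sigma_j^{-1}$ (and their reverses), and use the frames $\vec f_1,\vec f_2$ of~\eqref{eq:diagonalizing_frame_singular},~\eqref{eq:diagonalizing_frame_singular2} on the small circles $\xi_1,\xi_2$. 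Along $\eta_j$ the connection is, up to exponentially small corrections (Theorems~\ref{thm:Moc}, \ref{thm:MSWW6.7}), the model connection $\nabla_{\sqrt R}^{\model}$ of~\eqref{eq:semisimple_model_flat_connection}, which is diagonal; so the parallel transport along $\eta_j$ is a diagonal matrix whose entries are $\exp\bigl(\pm(2\sqrt R\,\Re\,\pi_j + \text{holonomy of }B_{\L_{\E}})\bigr)$ — this is where the abelian connection form $B_{\L_{\E}}$ and the periods $\pi_j=\int_{x_2}^{t_j}Z_+$ enter, via $\Re Z_\pm = \pm\Re Z_+$ and the splitting~\eqref{eq:limiting_unitary_connection_form}. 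For the circle factors $\xi_1,\xi_2$ I would invoke Proposition~\ref{prop:asymptotic_RH}: in the chosen frame the monodromy around $\xi_j$ is, to leading order, the anti-diagonal matrix~\eqref{eq:RH_xi}, with diagonal entries exponentially small and off-diagonal entries $\sqrt{-1}\,e^{\mp 8\Re\sqrt{\tau_j}\sqrt{Rr_0}}$ (times bounded factors).

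Next I would multiply the six matrices in order. The conjugating factors along $\eta_j^{-1}\ast\eta_j$ partially telescope; what survives is, schematically, for each $j\in\{1,2\}$ a contribution
$$
\begin{pmatrix} e^{\,2\kappa_j} & 0\\ 0 & e^{-2\kappa_j}\end{pmatrix}
\begin{pmatrix} * & \sqrt{-1}e^{-8\Re\sqrt{\tau_j}\sqrt{Rr_0}}\\ \sqrt{-1}e^{8\Re\sqrt{\tau_j}\sqrt{Rr_0}} & *\end{pmatrix}
\begin{pmatrix} e^{-2\kappa_j} & 0\\ 0 & e^{\,2\kappa_j}\end{pmatrix},
$$
where $\kappa_j = \int_{\eta_j} B_{\L_{\E}} + 2\sqrt R\,\Re\,\pi_j$ collects the abelian and WKB holonomy along $\eta_j$; the key bookkeeping is that $8\Re\sqrt{\tau_j}\sqrt{Rr_0}$ is exactly the leading WKB growth of $\int$ of $Z_+$ around $\xi_j$, so that the circle and the radial exponents combine into the single quantity $\Re\,\pi_j$ (one must be careful with the sign convention fixed just before the proposition, referenced as~\eqref{eq:sign_choice}). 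Taking the product over $j=1$ then $j=2$ and discarding the exponentially negligible diagonal entries from~\eqref{prop:asymptotic_RH1}, the dominant part of $\RH(\nabla_{\sqrt R})[\zeta_2]$ becomes a product of two anti-diagonal matrices, hence diagonal, of the form
$$
\begin{pmatrix} -\,e^{\,2(\kappa_1-\kappa_2)} & 0\\ 0 & -\,e^{-2(\kappa_1-\kappa_2)}\end{pmatrix}+o(1),
$$
so that $l_2(\E,\sqrt R\theta) = \tr = -2\cosh\bigl(2(\kappa_1-\kappa_2)\bigr)+o(1) = -2\cosh\bigl(2\int_{\eta_2-\eta_1}B_{\L_{\E}} + 4\sqrt R\,\Re(\pi_2-\pi_1)\bigr)+o(1)$, which is the claimed limit after dividing through. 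When $\Re(\pi_1-\pi_2)=0$ the exponentially large factors cancel and the same computation shows $l_2$ stays bounded, with a limit governed purely by the abelian periods $\int_{\eta_2-\eta_1}B_{\L_{\E}}$ (and the surviving bounded correction terms), giving the second assertion.

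The main obstacle I anticipate is controlling the \emph{matching of frames} across the three regimes — the Mochizuki model region along $\eta_j$, the fiducial region on the disc around $t_j$, and the diagonalizing frame $\vec f_1,\vec f_2$ on $\xi_j$ — so that the six factors are genuinely composable with no stray unbounded gauge; in particular one must check that the transition matrix between the model frame and the fiducial/diagonalizing frame at the gluing radius $r_j=|\tau_j|r_0$ is bounded with bounded inverse (this is essentially the content of the gluing construction in~\cite{FMSW} and of Proposition~\ref{prop:monodromy_frame2}\eqref{prop:monodromy_frame2_part_1}, which says $h^{\fid}(\vec f_1,\vec f_2)\to 0$). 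A secondary technical point is tracking the $T^{k(\gamma,q)}$ twist from Proposition~\ref{prop:monodromy_frame1} through each sub-loop and verifying that the total twist around $\zeta_2$ is trivial (since $\zeta_2$ encloses the two points $t_1,t_2$, each counted once, so $k\equiv 0$), which is what makes the leading term diagonal rather than anti-diagonal; I would isolate this as a short lemma before assembling the product.
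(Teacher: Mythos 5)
Your plan is correct and matches the paper's argument essentially step for step: you decompose $\zeta_2$ into $(\eta_1\ast\xi_1\ast\eta_1^{-1})\ast(\eta_2\ast\xi_2\ast\eta_2^{-1})$, use Theorem~\ref{thm:Moc} for the diagonal transport along $\eta_j$ and Proposition~\ref{prop:asymptotic_RH} (together with the $T^{k}$ bookkeeping from Proposition~\ref{prop:monodromy_frame1}) for the $\xi_j$ factors, combine the radial WKB exponents with the $8\Re\sqrt{\tau_j}\sqrt{R r_0}$ circle exponent to produce $4\sqrt{R}\,\Re\pi_j$, and multiply to get a dominant diagonal contribution $-\exp\bigl(\pm(2\int_{\eta_2-\eta_1}B_{\L_{\E}}+4\sqrt{R}\Re(\pi_2-\pi_1))\bigr)$, whence the $\cosh$. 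The only refinement the paper adds is an explicit comparison showing the product $a(\xi_1)a(\xi_2)$ of diagonal entries is always subdominant in the trace (rather than discarding it outright, which needs a small extra argument when $\upsilon_j=8$ and those entries decay only polynomially), but the conclusion and the route to it are the same.
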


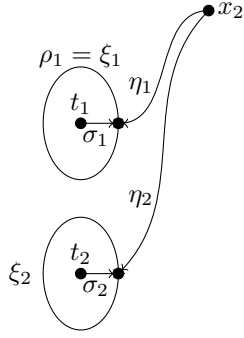
\begin{figure}
\centering
\begin{tikzpicture}[baseline = 5cm]
 \draw (-4.2,1) circle [x radius = 0.5cm, y radius = 0.75cm];
 \draw (-4.2,-1) circle [x radius = 0.5cm, y radius = 0.75cm];
 \filldraw [black] (-2.5,2.5) circle (2pt) node [anchor = west] {$x_2$};
 \filldraw [black] (-3.7,1) circle (2pt);
 \filldraw [black] (-3.7,-1) circle (2pt);
 \filldraw [black] (-4.2,1) circle (2pt)node [anchor = south] {$t_1$};
 \filldraw [black] (-4.2,-1) circle (2pt)node [anchor = south] {$t_2$};
 \draw[->>] (-2.5,2.5) to [out = 180, in = 0] (-3.7,1); 
 \draw[->>](-2.5,2.5) to [out = 225, in = 45] (-3.7,-1); 
 \draw (-4.2,1.9) node {$\rho_1 = \xi_1$};
 \draw (-5,-1) node {$\xi_2$};
 \draw (-3.4,1.5) node {$\eta_1$};
 \draw (-3.4,0) node {$\eta_2$};
 \draw[->>] (-4.2,-1) -- (-3.7,-1); 
 \draw[->>] (-4.2,1) -- (-3.7,1); 
 \draw (-4,-1.2) node {$\sigma_2$};
 \draw (-4,0.8) node {$\sigma_1$};
\end{tikzpicture}
\caption{Paths $\sigma_1, \sigma_2$.}
\label{fig2}
\end{figure}

\begin{proof}
 As mentioned above, it is sufficient to find the asymptotic behaviour of the diagonal entries of $\RH ( \nabla_{\sqrt{R}}) [ \zeta_2 ]$. 
 By Theorem~\ref{thm:Moc}, parallel transport map of $\nabla_{\sqrt{R}}$ along $\eta_j$ with respect to a diagonalizing frame of the Higgs field is approximated 
 by the matrix 
 \begin{equation}\label{eq:parallel_transport_eta}
  P_j ( \E, \sqrt{R} \theta  )  = 
  \begin{pmatrix}
   e^{\int_{\eta_j} \frac 12 B_{\det (\E )} + B_{\L_{\E }} + \sqrt{R} \Re Z_+ (q, z) } & 0 \\
   0 & e^{\int_{\eta_j} \frac 12 B_{\det (\E )} - B_{\L_{\E }}  + \sqrt{R} \Re Z_- (q, z)} 
  \end{pmatrix}.
 \end{equation}
 Parallel transport $P_j ( \E, \sqrt{R} \theta  )$ identifies the unit length diagonalizing frame 
 $$
  (\vec{e}_+ (\eta_j(0)), \vec{e}_- (\eta_j(0)))
 $$
 of $\theta_{\sqrt{R}}$ at $\eta_j(0)$ with a diagonalizing frame 
 \begin{equation}\label{eq:non_unit_diagonalizing_frame}
    (\vec{e}_+ (\eta_j(1)), \vec{e}_- (\eta_j(1)))
 \end{equation}
 at $\eta_j(1)$. This latter, however, is not of unit length; instead, the lengths of its vectors are given for $i \in \{ \pm \}$ by 
 $$
  \vert \vec{e}_i (\eta_j(1)) \vert = e^{\sqrt{R} \int_{\eta_j} \Re Z_i (q, z)}. 
 $$
 So, by suitably picking the phases of the frame vectors $e_+, e_-$, 
 we may assume that the matrix expressing the basis elements 
 of~\eqref{eq:non_unit_diagonalizing_frame} with respect 
 to~\eqref{eq:diagonalizing_frame_singular} is given by 
 $$
  Q_j ( \E, \sqrt{R} \theta  )  = 
  \begin{pmatrix}
   e^{\int_{\eta_j} \sqrt{R} \Re Z_+ (q, z) } & 0 \\
   0 & e^{\int_{\eta_j} \sqrt{R} \Re Z_- (q, z)} 
  \end{pmatrix}.
 $$
 It follows that the monodromy matrix of $\nabla_{\sqrt{R}}$ along the loop $\eta_j \ast \xi_j \ast \eta_j^{-1}$ with respect to the frame~\eqref{eq:non_unit_diagonalizing_frame} is equal to 
 $$   
 \Ad_{P_j ( \E, \sqrt{R} \theta  )^{-1}} \circ \Ad_{Q_j ( \E, \sqrt{R} \theta  )^{-1}} 
  ( \RH ( \nabla_{\sqrt{R}}) [ \xi_j ] ).
 $$
 In view of Proposition~\ref{prop:asymptotic_RH} we find 
 \begin{align}
   a ( \eta_j \ast \xi_j \ast \eta_j^{-1} , (\E, \sqrt{R} \theta )  ) e^{(8 - \upsilon_j) \sqrt{R r_j}} \upsilon_j \sqrt[4]{R r_j} & \to \sqrt{\pi} \notag \\
   b ( \eta_j \ast \xi_j \ast \eta_j^{-1} , (\E, \sqrt{R} \theta )  ) e^{ \sqrt{R r_j} \upsilon_j + 4 \int_{\eta_j} \sqrt{R} \Re Z_+ (q, z)} & \to \sqrt{-1} e^{-2 \int_{\eta_j}  B_{\L_{\E }}} 
   \label{eq:off_diagonal1} \\
   c ( \eta_j \ast \xi_j \ast \eta_j^{-1} , (\E, \sqrt{R} \theta )  ) e^{-\sqrt{R r_j} \upsilon_j - 4 \int_{\eta_j} \sqrt{R} \Re Z_+ (q, z)} & \to \sqrt{-1} e^{2 \int_{\eta_j} B_{\L_{\E }}}
   \label{eq:off_diagonal2} \\
   d ( \eta_j \ast \xi_j \ast \eta_j^{-1} , (\E, \sqrt{R} \theta )  ) e^{(8 - \upsilon_j) \sqrt{R r_j}} \upsilon_j \sqrt[4]{R r_j} & \to \sqrt{\pi} \notag
 \end{align} 
 with 
 \begin{align*}
     \upsilon_j & = 8 \cos \left( \frac{\arg ( \tau_j )}2 \right),\\
     r_j & = |\tau_j| r_0. 
 \end{align*}
 Given that $B_{\L_{\E }}$ is a $\sqrt{-1}\R$-valued $1$-form, the above limiting values are 
 of length $1$ and $\sqrt{\pi}$ respectively; the phases appearing 
 in~\eqref{eq:off_diagonal1} and~\eqref{eq:off_diagonal2} will play a fundamental role 
 in Section~\ref{sec:proof}.
 
 We now make the observation that by~\eqref{eq:Z_tilde2} and~\eqref{eq:Hopf} for 
 $0 < r_0\ll 1$ we have 
 \begin{align*}
  \int_{\sigma_j} Z_+ (q, z) & = \pm \int_{\sigma_j} \sqrt{\frac{(a z - b)}{\prod_{k=0}^4 (z - t_k)}} \mbox{d} z \\
  & \approx \pm \sqrt{\frac{a t_j - b}{\prod_{0 \leq k \leq 4, k\neq j} (t_j - t_k )}} \int_{t_j}^{t_j + r_0} \frac{\mbox{d} z}{\sqrt{z-t_j}} \\
  & = \pm 2 \sqrt{\tau_j r_0} 
 \end{align*}
 where we have used~\eqref{eq:tauj} in the last line. 
 Notice that we have a freedom of sign in choosing both $Z_+ (q, z)$ and $\sqrt{\tau_j}$. 
 We require that the sign of $Z_+$ is chosen so that the precise form of the above equality be 
 \begin{equation}\label{eq:sign_choice}
    \int_{\sigma_j} Z_+ (q, z) = - 2 \sqrt{\tau_j r_0}. 
 \end{equation}
 We infer that the integral appearing in the exponent of the off-diagonal 
 terms~\eqref{eq:off_diagonal1}--\eqref{eq:off_diagonal2} is of the form 
 \begin{align*}
  \sqrt{R r_j} \upsilon_j + 4 \int_{\eta_j} \sqrt{R} \Re Z_+ (q, z) & = 8 \sqrt{R r_0} \Re \sqrt{\tau_j} + 4 \sqrt{R} \int_{\eta_j}  \Re Z_+ (q, z) \\
  & = 4  \sqrt{R} \left( - \int_{\sigma_j} \Re Z_+ (q, z) + \int_{\eta_j} \Re Z_+ (q, z) \right) \\
  & = 4 \sqrt{R} \int_{x_2}^{t_j} \Re Z_+ (q, z) \\
  & = 4 \sqrt{R} \Re \pi_j, 
 \end{align*}
 which allows us to recast the 
 limits~\eqref{eq:off_diagonal1}--\eqref{eq:off_diagonal2} as 
  \begin{align*}
  b ( \eta_j \ast \xi_j \ast \eta_j^{-1} , (\E, \sqrt{R} \theta )  ) e^{4 \sqrt{R} \Re \pi_j} & 
  \to \sqrt{-1} e^{-2 \int_{\eta_j}  B_{\L_{\E }}}\\
  c ( \eta_j \ast \xi_j \ast \eta_j^{-1} , (\E, \sqrt{R} \theta )  ) e^{-4 \sqrt{R} \Re \pi_j} & 
  \to \sqrt{-1} e^{2 \int_{\eta_j}   B_{\L_{\E }}}.
  \end{align*}

 Now, we have 
 \begin{align}
  & \RH ( \nabla_{\sqrt{R}}) [ \zeta_2 ] \label{eq:product_RH} \\
  & = \RH ( \nabla_{\sqrt{R}}) [ \eta_1 \ast \xi_1 \ast \eta_1^{-1} ] \RH ( \nabla_{\sqrt{R}}) [ \eta_2 \ast \xi_2 \ast \eta_2^{-1} ] .\notag
 \end{align}
 By definition, $l_2 ( \E, \sqrt{R} \theta  )$ is the trace of this matrix, hence we need to compute the diagonal entries of the product~\eqref{eq:product_RH}. 
 
 Its entry $a ( \zeta_2, (\E, \sqrt{R} \theta )  )$ of index $(1,1)$ is the sum 
 $$
  a ( \eta_1 \ast \xi_1 \ast \eta_1^{-1} , (\E, \sqrt{R} \theta )  ) a ( \eta_2 \ast \xi_2 \ast \eta_2^{-1} , (\E, \sqrt{R} \theta )  ) + b ( \eta_1 \ast \xi_1 \ast \eta_1^{-1} , (\E, \sqrt{R} \theta )  ) c ( \eta_2 \ast \xi_2 \ast \eta_2^{-1} , (\E, \sqrt{R} \theta )  ).
 $$
 According to~\eqref{eq:RH_xi} the leading order term of the asymptotic expansion of its first term as $R \to \infty$ is given by 
 \begin{equation}\label{eq:1_1_entry_first_term}
    \pi \frac{e^{- (8 - \upsilon_1) \sqrt{R r_1} - (8 - \upsilon_2) \sqrt{R r_2} }}{\upsilon_1 \upsilon_2 \sqrt[4]{r_1 r_2 R^2}}.
 \end{equation}
 The leading order term of the asymptotic expansion of the second term of $a ( \zeta_2, (\E, \sqrt{R} \theta )  )$ is 
  \begin{equation}
  - \exp \left( 2 \int_{\eta_2 - \eta_1}  B_{\L_{\E }} + 4 \sqrt{R} \Re ( \pi_2 - \pi_1) \right). \label{eq:1_1_entry_second_term}
 \end{equation}
 We again emphasize that this formula gives the polar decomposition of the corresponding 
 term, as $\int_{\eta_2 - \eta_1}  B_{\L_{\E }}$ is purely imaginary and $\pi_j\in\R$. 
 
 The terms of the $(2,2)$-entry $d ( \zeta_2, (\E, \sqrt{R} \theta )  )$ of~\eqref{eq:product_RH} are similar to those of  $a ( \zeta_2, (\E, \sqrt{R} \theta )  )$, up to exchanging the subscripts $j=1$ and $j=2$ of 
 $\upsilon_j, r_j, \eta_j$.  Namely, the term coming from the product of diagonal entries 
 of the factors has leading order term in its asymptotic expansion given 
 by~\eqref{eq:1_1_entry_first_term}, and the leading order term of its other term is 
 \begin{equation}\label{eq:2_2_entry_second_term}
  - \exp \left( 2 \int_{\eta_1 - \eta_2} B_{\L_{\E }} + 4 \sqrt{R} \Re ( \pi_1 - \pi_2) \right).
 \end{equation}
 Notice that the product of~\eqref{eq:1_1_entry_second_term} 
 and~\eqref{eq:2_2_entry_second_term} equals $1$. 
  
 Now, we observe that by~\eqref{eq:upsilon} the coefficient of $\sqrt{R}$ in 
 the exponent of~\eqref{eq:1_1_entry_first_term} is never positive (as it has already 
 been pointed out in the proof of Proposition~\ref{prop:asymptotic_RH}). 
 On the other hand, at least one of the coefficients of $\sqrt{R}$ in the exponent of~\eqref{eq:1_1_entry_second_term} and in the exponent of~\eqref{eq:2_2_entry_second_term} is non-negative. 
 In the extreme case where the coefficients of $\sqrt{R}$ in the exponent of all terms~\eqref{eq:1_1_entry_first_term},~\eqref{eq:1_1_entry_second_term} and~\eqref{eq:2_2_entry_second_term} vanish, 
 then the $\sqrt{R}$ in the denominator of~\eqref{eq:1_1_entry_first_term} guarantees that it is negligible compared to the sum of the other two terms. 
 To sum up, this shows that in the trace, the leading-order term may not 
 be~\eqref{eq:1_1_entry_first_term}, rather it is equal to~\eqref{eq:1_1_entry_second_term} 
 or~\eqref{eq:2_2_entry_second_term} according as $\Re ( \pi_1 - \pi_2) < 0$ or 
 $\Re ( \pi_1 - \pi_2) > 0$, and to the sum of these terms if $\Re ( \pi_1 - \pi_2) = 0$. 
 In any case, the term~\eqref{eq:1_1_entry_first_term} converges to $0$ as $R\to\infty$, 
 and if $\Re ( \pi_1 - \pi_2) > 0$ (respectively, $\Re ( \pi_1 - \pi_2) < 0$) then the 
 same limit holds for the term~\eqref{eq:1_1_entry_second_term} 
 (respectively,~\eqref{eq:2_2_entry_second_term}). 
 Finally, we conclude using that for a ray $\C$ of the form $t e^{\sqrt{-1}\phi_0}$
 with fixed $- \frac{\pi}{2} < \phi_0 < \frac{\pi}{2}$ and variable $t>0$ we have 
 $$
  \lim_{t\to\infty } 2 \cosh ( t e^{\sqrt{-1}\phi_0} ) e^{-t e^{\sqrt{-1}\phi_0} } = 1.
 $$
\end{proof}

Recall Hopf co-ordinates~\eqref{eq:Hopf_coordinates} on $S^3_1$, $\varphi$ being the co-ordinate along the Hopf fibers. 
\begin{prop}\label{prop:critial_angle2}
 Fix $ q \in S^3_1 $ and assume $\pi_1 ( q) \neq \pi_2 ( q)$. Then there exists a unique 
 $\varphi_2 \in [0, 2\pi)$ such that for every $\L_{\E }\in \operatorname{Jac}(X_{q})$ 
 the co-ordinate $l_2 (\E, e^{\sqrt{-1} \varphi_2} \sqrt{R} \theta )$ remains bounded as $R \to \infty$. 
\end{prop}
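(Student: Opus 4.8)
The plan is to reduce the statement to a careful analysis of the exponent
$$
  4\sqrt{R}\,\Re\!\big(\pi_2(q)-\pi_1(q)\big) + 2\int_{\eta_2-\eta_1} B_{\L_{\E}}
$$
appearing in Proposition~\ref{prop:monodromy_zeta2}, tracked as we rotate $q$ along its Hopf fiber. First I would recall that along the Hopf circle through $q$, parametrised by $\varphi$ as in~\eqref{eq:Hopf_coordinates}, the coefficients $(a,b)$ of the polynomial $q(z,w)=(az-bw)\prod_{j=0}^4(z-t_jw)$ are multiplied by the common phase $e^{\sqrt{-1}\varphi}$. Consequently $Z_+(q,z)$, being (up to sign) $\sqrt{az-b}$ times a fixed meromorphic $1$-form, gets multiplied by $e^{\sqrt{-1}\varphi/2}$, and hence
$$
  \pi_j(e^{\sqrt{-1}\varphi}q) = e^{\sqrt{-1}\varphi/2}\,\pi_j(q)
$$
for $j\in\{1,2\}$ (with a consistent choice of branch along the Hopf circle). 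Thus
$$
  \pi_2\!\big(e^{\sqrt{-1}\varphi}q\big) - \pi_1\!\big(e^{\sqrt{-1}\varphi}q\big)
  = e^{\sqrt{-1}\varphi/2}\,\big(\pi_2(q)-\pi_1(q)\big),
$$
so the quantity $\pi_2-\pi_1$ rotates at half the speed of $\varphi$ in the complex plane.

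Next I would isolate the role of the real part. By Proposition~\ref{prop:monodromy_zeta2}, $l_2(\E,\sqrt{R}\,\theta)$ stays bounded as $R\to\infty$ precisely when $\Re\big(\pi_2(q)-\pi_1(q)\big)=0$, because the growth of $l_2$ is governed by $e^{4\sqrt{R}\,\Re(\pi_2-\pi_1)}$ (for $\Re(\pi_2-\pi_1)>0$) or $e^{-4\sqrt{R}\,\Re(\pi_2-\pi_1)}$ (for $\Re(\pi_2-\pi_1)<0$) via~\eqref{eq:1_1_entry_second_term}--\eqref{eq:2_2_entry_second_term}, with the cross term~\eqref{eq:1_1_entry_first_term} always negligible; and when $\Re(\pi_2-\pi_1)=0$ the proposition explicitly gives a finite limit. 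Writing $\pi_2(q)-\pi_1(q)=\varrho\,e^{\sqrt{-1}\psi_0}$ with $\varrho=|\pi_2(q)-\pi_1(q)|>0$ (nonzero by the hypothesis $\pi_1(q)\neq\pi_2(q)$) and some fixed $\psi_0\in[0,2\pi)$, the condition
$$
  \Re\!\Big(e^{\sqrt{-1}\varphi/2}\,\varrho\,e^{\sqrt{-1}\psi_0}\Big)=0
$$
reads $\cos\big(\tfrac{\varphi}{2}+\psi_0\big)=0$, i.e. $\tfrac{\varphi}{2}+\psi_0 \equiv \tfrac{\pi}{2} \pmod{\pi}$, i.e. $\varphi \equiv \pi - 2\psi_0 \pmod{2\pi}$. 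On the circle $\varphi\in[0,2\pi)$ this is exactly one value, which I would christen $\varphi_2$. Crucially the answer does not depend on $\L_{\E}\in\operatorname{Jac}(X_q)$: the term $2\int_{\eta_2-\eta_1}B_{\L_{\E}}$ is purely imaginary, so it never affects $\Re(\pi_2-\pi_1)$, and at $\varphi=\varphi_2$ it only shifts the finite limiting value of $l_2$, not its boundedness.

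The main technical point, and the step I expect to require the most care, is the branch-consistency of the identity $\pi_j(e^{\sqrt{-1}\varphi}q)=e^{\sqrt{-1}\varphi/2}\pi_j(q)$ around the full Hopf circle: the square root $\sqrt{az-b}$ defining $Z_+$ is only well defined up to sign, and a priori the sign could jump as $\varphi$ traverses $[0,2\pi]$. I would handle this by noting that $\varphi\mapsto\varphi/2$ sends $[0,2\pi]$ to $[0,\pi]$, so $e^{\sqrt{-1}\varphi/2}$ does not return to its start; tracking instead the continuous family of square roots one sees that $Z_+$ at $\varphi=2\pi$ equals $-Z_+$ at $\varphi=0$, i.e. the frame picks up the Weyl-group element $T$, consistently with Proposition~\ref{prop:monodromy_frame1}. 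Since this sign ambiguity acts on both $\pi_1$ and $\pi_2$ simultaneously, it cancels in $\pi_2-\pi_1$ up to an overall sign, hence does not affect the vanishing locus of $\Re(\pi_2-\pi_1)$; in particular the uniqueness of $\varphi_2$ in $[0,2\pi)$ is unaffected. Assembling these observations with Proposition~\ref{prop:monodromy_zeta2} completes the proof.
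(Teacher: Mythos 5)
Your proof is correct and follows essentially the same route as the paper: restrict to the Hopf circle, use $Z_+(e^{\sqrt{-1}\varphi}q,z)=e^{\sqrt{-1}\varphi/2}Z_+(q,z)$ to get $\pi_2-\pi_1\mapsto e^{\sqrt{-1}\varphi/2}(\pi_2-\pi_1)$, and then observe (via Proposition~\ref{prop:monodromy_zeta2}) that boundedness of $l_2$ is equivalent to the vanishing of $\Re(\pi_1-\pi_2)$, which happens for exactly one $\varphi\in[0,2\pi)$ since the half-angle rotation sweeps the argument through an interval of length $\pi$. The extra points you raise — independence from $\L_{\E}$ because $B_{\L_{\E}}$ is purely imaginary, and the branch consistency under a full Hopf loop — are consonant with the paper's implicit reasoning and do not constitute a different method.
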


\begin{proof}
 According to Proposition~\eqref{prop:monodromy_zeta2}, 
 $l_2 (\E, e^{\sqrt{-1} \varphi_2} \sqrt{R} \theta  )$ is bounded as $R \to \infty$ if and only 
 if the equation 
 $$
  \Re ( \pi_1 (e^{\sqrt{-1} \varphi}  q) - \pi_2 (e^{\sqrt{-1} \varphi}  q)) = 0
 $$
 holds for the variable $\varphi \in [0, 2\pi)$. 
 This quantity is the horizontal projection of 
 $$
  \int_{t_2}^{t_1}  Z_+ ( e^{\sqrt{-1} \varphi}  q, z), 
 $$
 where the contour of integration is $\sigma_1 * \eta_1^{-1} * \eta_2 * \sigma_2^{-1}$. 
 Now, taking into account the definition~\eqref{eq:Z_tilde2}, we have 
 $$
   Z_+ ( e^{\sqrt{-1} \varphi}  q, z) =  e^{\sqrt{-1} \varphi/2}  Z_+ (q, z). 
 $$
 Clearly, there exists a unique value $\varphi_2 \in [0, 2\pi)$ satisfying the property that the non-zero complex number $\pi_1 ( q) - \pi_2 ( q)$ 
 multiplied by the unit length complex number $e^{\sqrt{-1} \varphi_2/2}$ has horizontal projection equal to $0$. 
\end{proof}
The results of this subsection have been stated for the case of $l_2 (  \E, \sqrt{R} \theta  )$. 
We now proceed to stating the results analogous to Propositions~\ref{prop:monodromy_zeta2} and~\ref{prop:critial_angle2} for the case of $l_3 (  \E, \sqrt{R} \theta  )$, whose proofs are straightforward modifications of the ones proven so far. 
For $j\in \{ 0, 4 \}$ introduce 
$$
  \pi_j = \pi_j ( q) = \int_{x_4}^{t_j}  Z_+ (q, z) 
$$
along some path in $S_3$. 
\begin{prop}\label{prop:monodromy_zeta3}
Fix $ q \in S^3_1 $. 
\begin{enumerate}
 \item 
  In case $\Re ( \pi_4 ( q) - \pi_0 ( q)) \neq 0$ we have the limit 
  $$
  l_3 (  \E, \sqrt{R} \theta  ) 2 \cosh 
  \left( 2 \int_{\eta_4 - \eta_0}  B_{\L_{\E }} + 4 \sqrt{R} \Re ( \pi_4 - \pi_0)  \right) 
  \to - 1 
  $$
  as $R \to \infty$. 
 \item In case $\Re ( \pi_4 ( q) - \pi_0 ( q))  = 0$ the limit of $l_3 (  \E, \sqrt{R} \theta  )$ as 
 $R \to \infty$ exists and is finite. 
 \item \label{prop:monodromy_zeta3_3}
 If $\pi_4 ( q) \neq \pi_0 ( q)$, then there exists a unique $\varphi_3 \in [0, 2\pi)$ such that for every $\L_{\E }\in \operatorname{Jac}(X_{q})$ the co-ordinate 
 $l_3 (\E, e^{\sqrt{-1} \varphi_3} \sqrt{R} \theta )$ remains bounded as $R \to \infty$. 
\end{enumerate}
\end{prop}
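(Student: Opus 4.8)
The plan is to run the computations of Propositions~\ref{prop:monodromy_zeta2} and~\ref{prop:critial_angle2} essentially verbatim, with the data $(\xi_1,\xi_2,\eta_1,\eta_2,\rho_2,x_2,\pi_1,\pi_2)$ replaced throughout by $(\xi_4,\xi_0,\eta_4,\eta_0,\rho_3,x_4,\pi_4,\pi_0)$, since no new analytic input is required. First I would fix the loop based at $x_4$
\[
  \zeta_3 = \eta_4 \ast \xi_4 \ast \eta_4^{-1} \ast \eta_0 \ast \xi_0 \ast \eta_0^{-1},
\]
which encircles $t_4$ and $t_0$ once counterclockwise. Its class in $\pi_1(\CP1\setminus D, x_4)$ is conjugate, via a sub-path of $\psi_3$, to $[\rho_3]\in\pi_1(\CP1\setminus D, s_3)$, so $\tr\RH(\nabla_{\sqrt R})[\zeta_3] = l_3(\E,\sqrt R\theta)$ and it suffices to control the diagonal entries of $\RH(\nabla_{\sqrt R})[\zeta_3]$.

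Next I would apply Proposition~\ref{prop:asymptotic_RH} to the small loops $\xi_4$ and $\xi_0$, and Theorem~\ref{thm:Moc} to the arcs $\eta_4,\eta_0$: conjugating $\RH(\nabla_{\sqrt R})[\xi_j]$ by the diagonal parallel-transport matrices along $\eta_j$ exactly as in the proof of Proposition~\ref{prop:monodromy_zeta2}, and using the sign normalisation~\eqref{eq:sign_choice} to rewrite, for $j\in\{0,4\}$,
\[
   \sqrt{Rr_j}\,\upsilon_j + 4\int_{\eta_j}\sqrt R\,\Re Z_+(q,z) = 4\sqrt R\,\Re\pi_j ,
\]
one obtains the analogues of the limits~\eqref{eq:off_diagonal1}--\eqref{eq:off_diagonal2} for the loops $\eta_4\ast\xi_4\ast\eta_4^{-1}$ and $\eta_0\ast\xi_0\ast\eta_0^{-1}$. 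Multiplying the two resulting matrices, the diagonal-times-diagonal contributions to the $(1,1)$- and $(2,2)$-entries tend to $0$ (cf. part~\eqref{prop:asymptotic_RH1} of Proposition~\ref{prop:asymptotic_RH}, together with the factor $\sqrt R$ in the denominator), while the off-diagonal-times-off-diagonal contributions are $-\exp(\pm Y)$ with
\[
  Y = 2\int_{\eta_4-\eta_0}B_{\L_{\E }} + 4\sqrt R\,\Re(\pi_4-\pi_0),
\]
whose product equals $1$ (here $B_{\L_{\E }}$ is purely imaginary and $\pi_0,\pi_4\in\R$). Hence $l_3(\E,\sqrt R\theta)$ coincides with $-2\cosh Y$ up to an error vanishing as $R\to\infty$. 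When $\Re(\pi_4-\pi_0)\neq0$, the elementary limit $\lim_{t\to\infty}2\cosh(te^{\sqrt{-1}\phi_0})e^{-te^{\sqrt{-1}\phi_0}}=1$ for $-\tfrac\pi2<\phi_0<\tfrac\pi2$ (applied with $t = 4\sqrt R\,|\Re(\pi_4-\pi_0)|$) yields part~(1); when $\Re(\pi_4-\pi_0)=0$ the argument $Y$ stays bounded, giving part~(2).

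For part~(3) I would mimic Proposition~\ref{prop:critial_angle2}: by parts~(1)--(2), $l_3(\E,e^{\sqrt{-1}\varphi}\sqrt R\theta)$ stays bounded as $R\to\infty$ if and only if $\Re(\pi_4(e^{\sqrt{-1}\varphi}q)-\pi_0(e^{\sqrt{-1}\varphi}q))=0$. Since~\eqref{eq:Z_tilde2} gives $Z_+(e^{\sqrt{-1}\varphi}q,z)=e^{\sqrt{-1}\varphi/2}Z_+(q,z)$, we have $\pi_j(e^{\sqrt{-1}\varphi}q)=e^{\sqrt{-1}\varphi/2}\pi_j(q)$ for $j\in\{0,4\}$, so the condition becomes $e^{\sqrt{-1}\varphi/2}(\pi_4(q)-\pi_0(q))\in\sqrt{-1}\,\R$. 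Since $\pi_4(q)\neq\pi_0(q)$ by hypothesis, this selects a unique $\varphi_3\in[0,2\pi)$; it does not depend on $\L_{\E }$, because $\pi_0,\pi_4$ depend only on $q$ and the unique $\L_{\E }$-dependent term $2\int_{\eta_4-\eta_0}B_{\L_{\E }}$ in $Y$ is purely imaginary, hence irrelevant to the growth of $\cosh$.

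The only points that are not pure bookkeeping are exactly those already settled upstream: the correct placement of the Weyl-group transposition $T$ furnished by Proposition~\ref{prop:monodromy_frame1} together with part~\eqref{prop:monodromy_frame2_part_3} of Proposition~\ref{prop:monodromy_frame2}, and the degenerate sub-case $\Re\sqrt{\tau_j}=0$ (for $j=0$ or $j=4$), in which $\xi_j$ contributes no exponential factor; both are inherited unchanged from the proof of Proposition~\ref{prop:asymptotic_RH}. As for $\zeta_2$, the position of the ramification point $t(q)$ relative to the pants decomposition plays no role here, since $\xi_4,\xi_0$ have radius $r_0\ll1$ and enclose only $t_4$, respectively $t_0$, and $\zeta_3$ is assembled purely from these two small loops and the fixed arcs $\eta_4,\eta_0$. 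Consequently there is no genuine obstacle: the statement follows from the already-established Propositions~\ref{prop:asymptotic_RH},~\ref{prop:monodromy_zeta2} and~\ref{prop:critial_angle2} by re-indexing.
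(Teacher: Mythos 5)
Your proposal is correct and is exactly what the paper intends: the text preceding the proposition explicitly describes the proof as a ``straightforward modification'' of Propositions~\ref{prop:monodromy_zeta2} and~\ref{prop:critial_angle2}, with the replacement $(\xi_1,\xi_2,\eta_1,\eta_2,x_2,\pi_1,\pi_2)\mapsto(\xi_4,\xi_0,\eta_4,\eta_0,x_4,\pi_4,\pi_0)$, and you carry out precisely that re-indexing together with the pieces (Weyl transposition, degenerate $\Re\sqrt{\tau_j}=0$ sub-case) inherited from Proposition~\ref{prop:asymptotic_RH}. One small remark: your computation yields $l_3\approx -2\cosh Y$, i.e.\ $l_3(\E,\sqrt{R}\theta)^{-1}\,2\cosh Y\to -1$, which is the correct analogue of Proposition~\ref{prop:monodromy_zeta2}; the printed statement of part~(1) drops the exponent $-1$ on $l_3$, a typographical slip which your argument implicitly corrects.
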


\subsection{Asymptotic behaviour of complex twist co-ordinates}\label{sec:asymptotic_twist}

For $i \in \{ 2, 3 \}$ we let 
$$
    [p_i (\E, \sqrt{R} \theta ) : q_i (\E, \sqrt{R} \theta ) ] 
$$
stand for the complex twist co-ordinates $[p_i : q_i]$ introduced in 
Section~\ref{sec:twist} of the local system $\RH \circ \psi (\E, \sqrt{R} \theta )$. 
In this section we will determine the asymptotic behaviour of these co-ordinates 
as $R \to \infty$. For this purpose, we first determine the asymptotic behaviour 
of the quantities 
$$
  u_i, w_i, A_i, R_i, R'_{i-1}, T_i, U_i, h_i, \psi_i, P_i, Q_i
$$ 
introduced in Section~\ref{sec:twist}. For ease of notation, we will often omit 
to indicate the dependence of the co-ordinates on $(\E, \sqrt{R} \theta )$. 

Before continuing, we add an important tweak to our construction. 
Namely, so far we have assumed that the radii (with respect to Euclidean 
distance) of the circles $\xi_j$ were all equal to each other. 
We now lift this constraint and from now on we merely assume the radii to be 
\emph{pairwise} equal to each other: the radii of $\xi_1, \xi_2$ are equal 
to $s_2 > 0$, however the radius of $\xi_3$ is some possibly different 
real number $s_3 > 0$, while the radii of $\xi_0, \xi_4$ are equal to yet another 
real number $s_4 > 0$. 
From this point on, these numbers take over the role that used to be that of $r_0$ 
up to this point. We will asjust their values according to our needs in 
Section~\ref{sec:Geometry_period}.
The results proven up to now all remain valid, because their assertions 
and proofs did not depend on the actual value of $r_0$. 

In what follows, for $2\times 2$ matrices $A, B$ with non-vanishing entries 
depending on a parameter $R\in\R$ we write $A \approx B$ whenever the limit of each entry of $A$ divided by the corresponding entry of $B$ converges to $1$ as $R \to \infty$. 
Similarly, for two scalar quantities $a,b$ depending on $R\in\R$ we write $a \approx b$ to express that $\frac ab \to 1$ as $R \to \infty$. 
We say $a$ is negligible compared to $b$ if $\frac ab \to 0$ as $R \to \infty$. 

Recall from Section~\ref{ssec:RH} that $c_j^{\pm} = \pm \sqrt{-1}$. 
We start by recording some asymptotic behaviours as $R \to \infty$ ensuing from Sections~\ref{sec:twist} and~\ref{sec:asymptotic_length} in the case $\Re ( \pi_1 - \pi_2) \neq 0$ 
\begin{align}
 l_1 & = c_1^+ + c_1^- = 0 \notag \\
 l_2 & \approx - 2 \cosh \left( 4 \sqrt{R} \Re ( \pi_2 - \pi_1) + 2 \int_{\eta_2 - \eta_1}  B_{\L_{\E }} \right)  \\
 u_2 & = \frac{l_1 - c_2^- l_2}{c_2^+ - c_2^-} \approx  - \cosh \left( 4 \sqrt{R} \Re ( \pi_2 - \pi_1) + 2 \int_{\eta_2 - \eta_1}  B_{\L_{\E }} \right)  \\
 U_2 & \approx \begin{pmatrix}
          1 & 0 \\
         - \cosh \left(  4 \sqrt{R} \Re ( \pi_2 - \pi_1) + 2 \int_{\eta_2 - \eta_1}  B_{\L_{\E }} \right) & 1
         \end{pmatrix}
         \label{eq:U2}
         \\
 A_2 & = \begin{pmatrix}
          c_2^+ & 0 \\
          0 & c_2^-
         \end{pmatrix} 
         = 
         \sqrt{-1} 
         \begin{pmatrix}
          1 & 0 \\
          0 & -1
         \end{pmatrix} 
         \\
 R_2 & \approx \begin{pmatrix}
          - \cosh \left(  4 \sqrt{R} \Re ( \pi_2 - \pi_1) + 2 \int_{\eta_2 - \eta_1}  B_{\L_{\E }} \right) & 1 \\
          * & - \cosh \left(  4 \sqrt{R} \Re ( \pi_2 - \pi_1) + 2 \int_{\eta_2 - \eta_1}  B_{\L_{\E }} \right)
         \end{pmatrix}
         \\
 R'_1 & \approx \begin{pmatrix}
          - \sqrt{-1} \cosh \left(  4 \sqrt{R} \Re ( \pi_2 - \pi_1) + 2 \int_{\eta_2 - \eta_1}  B_{\L_{\E }} \right) & \sqrt{-1} \\
          * & \sqrt{-1} \cosh \left(  4 \sqrt{R} \Re ( \pi_2 - \pi_1) + 2 \int_{\eta_2 - \eta_1}  B_{\L_{\E }} \right)
         \end{pmatrix}
\end{align}
where the entries marked by $*$ may be determined by the (known) determinant of the matrices, but we refrain from spelling them out as they will be irrelevant for our purposes.

\begin{prop}\label{prop:asymptotic_h_i}
For $2\leq i \leq 4$ we have the asymptotic behaviours 
 $$
 h_i^{-1} \approx 
 \begin{pmatrix}
    v_i & e^{-8\sqrt{R s_i} \Re \sqrt{\tau_i}} w_i \\
    - e^{8\sqrt{R s_i} \Re \sqrt{\tau_i}} v_i & w_i
  \end{pmatrix}
$$
and
$$
    h_i \approx \frac 1{2 v_i w_i} 
    \begin{pmatrix}
    w_i & - e^{-8\sqrt{R s_i} \Re \sqrt{\tau_i}} w_i \\
    e^{8\sqrt{R s_i} \Re \sqrt{\tau_i}} v_i & v_i
  \end{pmatrix}
$$
for some $v_i , w_i \in \C$ and the same choice of $\sqrt{\tau_i}$ as in~\eqref{eq:RH_xi}. 
\end{prop}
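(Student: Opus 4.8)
The plan is to construct the isomorphism $h_i\colon V|_{S_i}\to V_i(l_{i-1},l_i)$ explicitly in the diagonalizing frame of the Higgs field furnished by Theorem~\ref{thm:Moc}, and then read off its matrix asymptotics from the monodromy data already computed. Recall that $h_i$ is characterized (up to scalar, by \cite[Corollary~10.3]{Sim}) as the unique isomorphism intertwining the monodromy representation of $V|_{S_i}$ with the normal-form representation whose generators around $\rho_{i-1},\rho_i,\xi_i$ are $R'_{i-1},R_i,A_i$. So $h_i$ conjugates $\RH(\nabla_{\sqrt R})$ restricted to $\pi_1(S_i)$ into these standard matrices. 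I will work at the base point $x_i\in\mathrm{int}(S_i)$ and in the (essentially unique, up to the Cartan subgroup and the Weyl element) orthonormal frame diagonalizing $\theta$ near $x_i$, in which the monodromy around $\xi_i$ is given asymptotically by Proposition~\ref{prop:asymptotic_RH} conjugated by the parallel transport $Q_i(\E,\sqrt R\theta)$ along $\eta_i$ as in the proof of Proposition~\ref{prop:monodromy_zeta2}.

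First I would write down, in that orthonormal frame, the monodromy matrix $N_i := \RH(\nabla_{\sqrt R})[\eta_i\ast\xi_i\ast\eta_i^{-1}]$. By the computation reproduced in the proof of Proposition~\ref{prop:monodromy_zeta2} (formulas~\eqref{eq:off_diagonal1}--\eqref{eq:off_diagonal2} together with part~\eqref{prop:asymptotic_RH1} of Proposition~\ref{prop:asymptotic_RH}), this matrix is, to leading order, anti-diagonal: its diagonal entries decay like $e^{-(8-\upsilon_i)\sqrt{Rr_i}}$ while its off-diagonal entries carry the factors $e^{\mp(\sqrt{Rr_i}\upsilon_i + 4\int_{\eta_i}\sqrt R\,\Re Z_+)} = e^{\mp 4\sqrt R\,\Re\pi_i}$, and the exponent $\sqrt{Rr_i}\upsilon_i = 8\sqrt{Rs_i}\,\Re\sqrt{\tau_i}$ is exactly the one appearing in the statement. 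On the normal-form side $A_i=\sqrt{-1}\,\mathrm{diag}(1,-1)$, so $N_i$ must be conjugated to $A_i$ by $h_i^{-1}$. Since $A_i$ is diagonal with distinct eigenvalues while $N_i$ is asymptotically anti-diagonal, $h_i^{-1}$ must asymptotically be the matrix that swaps the two coordinate lines and rescales them — precisely the shape $\begin{pmatrix} v_i & e^{-8\sqrt{Rs_i}\Re\sqrt{\tau_i}}w_i\\ -e^{8\sqrt{Rs_i}\Re\sqrt{\tau_i}}v_i & w_i\end{pmatrix}$ asserted, where the relative exponential weight between the two columns records the difference in the growth rates of the two eigenvectors of $\theta$ transported along $\eta_i$ back to $x_i$. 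The scalars $v_i,w_i\in\C$ absorb the subleading (bounded, non-vanishing) pieces of the off-diagonal entries, including the phases $e^{\pm 2\int_{\eta_i}B_{\L_\E}}$ coming from the flat unitary connection, and the overall scalar ambiguity in $h_i$.

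Next I would pin down $h_i$ uniquely (killing the scalar freedom) by imposing in addition that it intertwine the monodromy around $\rho_{i-1}$ with $R'_{i-1}$, equivalently around $\rho_i$ with $R_i$; this is where the asymptotics of $l_2,l_3$ from Propositions~\ref{prop:monodromy_zeta2} and~\ref{prop:monodromy_zeta3} enter and fix the ratio $v_i/w_i$ up to the stated $\approx$. Then $h_i$ itself is obtained by inverting the $2\times 2$ matrix: $h_i\approx\frac{1}{\det(h_i^{-1})}\,\mathrm{adj}(h_i^{-1})$, and $\det(h_i^{-1})\approx v_iw_i + e^{-8\sqrt{Rs_i}\Re\sqrt{\tau_i}}w_i\cdot e^{8\sqrt{Rs_i}\Re\sqrt{\tau_i}}v_i = 2v_iw_i$, which gives exactly the displayed formula for $h_i$. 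A small bookkeeping point to handle carefully: the sign conventions on $Z_+$ and $\sqrt{\tau_i}$ must be the ones fixed in~\eqref{eq:sign_choice} and used in~\eqref{eq:RH_xi}, so that the exponent $8\sqrt{Rs_i}\Re\sqrt{\tau_i}$ appears with the indicated sign in the indicated entry and not its negative.

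The main obstacle I anticipate is not the linear algebra of inverting a $2\times2$ matrix, but justifying that the error terms genuinely stay subleading after conjugation by the (possibly large) transport matrices $Q_i$ and $U_i$: the diagonal entries of $N_i$ decay, but $Q_i$ contains factors $e^{\pm 4\sqrt R\,\Re\pi_i}$ which blow up, so one must check that the product of a decaying diagonal entry with a growing transport factor is still negligible compared to the surviving anti-diagonal contribution — i.e. that $(8-\upsilon_i)\sqrt{Rs_i}\,|\tau_i|^{1/2} > $ the relevant positive exponent, which follows from $\upsilon_i\in[-8,8]$ and the fact that $\Re\pi_i$ enters symmetrically in the two off-diagonal entries. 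A secondary subtlety is the residual Weyl-group ambiguity in the choice of orthonormal diagonalizing frame at $x_i$ (Proposition~\ref{prop:monodromy_frame1}): applying the transposition $T$ swaps the two columns and replaces $\Re\sqrt{\tau_i}$ by $-\Re\sqrt{\tau_i}$, so one must fix the branch of the frame consistently with the choices made in Section~\ref{sec:asymptotic_length} for the statement to hold verbatim; this is a matter of careful convention-tracking rather than a genuine difficulty.
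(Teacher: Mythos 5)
Your approach is essentially the paper's: the diagonal entries of the monodromy around $\xi_i$ decay while its off-diagonal entries carry opposite exponential weights, so $h_i^{-1}$, being the matrix whose columns span the eigenlines of that monodromy, has the swap-and-rescale shape asserted, and $h_i$ is then obtained by inverting a $2\times 2$ matrix with $\det \approx 2v_iw_i$. The paper's own proof reads off the eigenvectors directly from~\eqref{eq:RH_xi} (the monodromy expressed at the base point of $\xi_i$, where the off-diagonal entries have exponents $\pm 8\sqrt{R s_i}\Re\sqrt{\tau_i}$), while you insist on working at $x_i$ with $N_i = \RH(\nabla_{\sqrt R})[\eta_i\ast\xi_i\ast\eta_i^{-1}]$, whose off-diagonal exponents are the larger $\pm 4\sqrt R\,\Re\pi_i$. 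Those extra $\int_{\eta_i}\sqrt R\,\Re Z_+$ pieces can indeed only be reconciled with the stated formula by absorbing them into $v_i,w_i$; the paper makes this painless by noting that the follow-up Proposition~\ref{prop:asymptotic_h2} in fact produces $R$-dependent $v_i,w_i$, so there is no contradiction, but your write-up glosses over the resulting discrepancy in exponents and implicitly asserts that $8\sqrt{Rs_i}\Re\sqrt{\tau_i}$ is ``exactly'' what shows up while simultaneously computing the off-diagonals of $N_i$ to be $e^{\mp 4\sqrt R\Re\pi_i}$ — that needed a word of explanation. Your last two paragraphs (worrying about $Q_i$-conjugation blowing up subleading terms, and about pinning down $v_i/w_i$ via $R_i,R'_{i-1}$) are not needed for this proposition: the paper avoids the first by working with~\eqref{eq:RH_xi} directly, and the second is the content of Proposition~\ref{prop:asymptotic_h2}, not of this one, which deliberately leaves $v_i,w_i$ unspecified.
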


\begin{proof}
 As the monodromy matrix of $V_i (l_{i-1}, l_i )$ around $\xi_i$ is the diagonal matrix $A_i$, in order to determine $h_i^{-1}$ we need to find the eigenvectors of~\eqref{eq:RH_xi} for $j = i$, taking into account the relation 
 $$
    \sqrt{r_i} \upsilon_i = 8 \Re \sqrt{\tau_i s_i},
 $$
 see~\eqref{eq:upsilon},\eqref{eq:rj_r0}. 
 As we have shown in Proposition~\ref{prop:asymptotic_RH}, the diagonal entries 
 of~\eqref{eq:RH_xi} converge to $0$ and its determinant converges to $1$. 
 A direct computation then shows that $h_i^{-1}$ is of the desired form.  
 We conclude by taking matrix inverse. 
\end{proof}

\begin{prop}\label{prop:asymptotic_h2}
Assume $\Re (\sqrt{\tau_1} - \sqrt{\tau_2}) \neq 0$. 
Then the large scale behaviour as $R \to \infty$ of the isomorphism $h_2^{-1}$ identifying $V_i (l_{i-1}, l_i )$ to the restriction of $V$ to $S_i$ written with respect to the unit diagonalizing 
frame of the local system $V$ is  
$$
 h_2^{-1} \approx 
 \begin{pmatrix}
    \sinh \left( 8 \sqrt{R s_2} \Re (\sqrt{\tau_1} - \sqrt{\tau_2}) \right) & -1 \\
    - e^{8\sqrt{R s_2}  \Re \sqrt{\tau_2}} \sinh \left( 8 \sqrt{R s_2} \Re (\sqrt{\tau_1} - \sqrt{\tau_2}) \right) & - e^{ 8 \sqrt{R s_2} \Re \sqrt{\tau_2}} 
  \end{pmatrix}
$$
\end{prop}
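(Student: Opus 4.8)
The plan is to upgrade Proposition~\ref{prop:asymptotic_h_i} in the case $i=2$. That proposition already gives
\[
 h_2^{-1}\approx
 \begin{pmatrix}
   v_2 & e^{-8\sqrt{Rs_2}\,\Re\sqrt{\tau_2}}\,w_2\\
   -e^{8\sqrt{Rs_2}\,\Re\sqrt{\tau_2}}\,v_2 & w_2
 \end{pmatrix},
\]
its two columns being the eigenvectors of the monodromy $\RH(\nabla_{\sqrt R})[\xi_2]$ of~\eqref{eq:RH_xi} for the two eigenvalues $c_2^{\pm}=\pm\sqrt{-1}$ of $A_2$; at that stage $v_2,w_2$ are pinned down only up to a common scalar, reflecting the scalar ambiguity of $h_2$. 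So the only thing left to determine is the ratio $v_2:w_2$, and for this I would use the one intertwining relation not yet exploited.

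Namely, $h_2$ also conjugates the monodromy of $V|_{S_2}$ around $\rho_1=\xi_1$ into the matrix $R'_1$ attached to $V_2(l_1,l_2)$, so $\big(\text{monodromy around }\xi_1\big)\,h_2^{-1}=h_2^{-1}\,R'_1$ in the limit $R\to\infty$ (Theorem~\ref{thm:Moc}). Setting $\Theta=4\sqrt R\,\Re(\pi_2-\pi_1)+2\int_{\eta_2-\eta_1}B_{\L_{\E}}$, the asymptotics recorded just before the statement give $R'_1\approx\sqrt{-1}\left(\begin{smallmatrix}-\cosh\Theta & 1\\ -\sinh^2\Theta & \cosh\Theta\end{smallmatrix}\right)$ (using $u_2\approx-\cosh\Theta$, $l_2\approx-2\cosh\Theta$ and $u_2(l_2-u_2)-1\approx\sinh^2\Theta$), while combining Theorem~\ref{thm:Moc} with the entrywise estimates from the proof of Proposition~\ref{prop:monodromy_zeta2} shows that the monodromy of $V|_{S_2}$ around $\xi_1$, written in the same frame, is up to exponentially small terms off-diagonal with entries $\sqrt{-1}\,e^{\pm(\Theta-8\sqrt{Rs_2}\Re\sqrt{\tau_2})}$. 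Matching the four entries of the displayed identity is then a routine computation; the four conditions turn out mutually consistent (as they must, $h_2$ existing), and they force $v_2=e^{-8\sqrt{Rs_2}\Re\sqrt{\tau_2}}\,w_2\sinh\Theta$. Choosing the representative $w_2=-e^{8\sqrt{Rs_2}\Re\sqrt{\tau_2}}$ reproduces the second column of the claimed matrix and gives $v_2\approx-\sinh\Theta$ for the first.

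It remains to recognise $-\sinh\Theta$ as $\sinh\!\big(8\sqrt{Rs_2}\,\Re(\sqrt{\tau_1}-\sqrt{\tau_2})\big)$. For this I would re-express $\Theta$ in the diagonalizing frame of the present statement: the form $\Theta=4\sqrt R\,\Re(\pi_2-\pi_1)+2\int_{\eta_2-\eta_1}B_{\L_{\E}}$ is the one attached to the base point $x_2$ used in Proposition~\ref{prop:monodromy_zeta2}, and by the relation $4\sqrt R\,\Re\pi_j=8\sqrt{Rs_2}\,\Re\sqrt{\tau_j}+4\sqrt R\int_{\eta_j}\Re Z_+$ from that proof the surplus $4\sqrt R\int_{\eta_2-\eta_1}\Re Z_+ + 2\int_{\eta_2-\eta_1}B_{\L_{\E}}$ is exactly the transport factor absorbed by the change of frame, leaving $\Theta$ equal to $8\sqrt{Rs_2}\,\Re(\sqrt{\tau_2}-\sqrt{\tau_1})$; oddness of $\sinh$ then yields the stated $(1,1)$-entry. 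The hypothesis $\Re(\sqrt{\tau_1}-\sqrt{\tau_2})\neq0$ is the precise analogue of the hypothesis $\Re(\pi_1-\pi_2)\neq0$ of Proposition~\ref{prop:monodromy_zeta2}: it makes $|\Re\Theta|\to\infty$, hence legitimises the division by $\sinh\Theta$ above and guarantees that this term genuinely dominates, so that no cancellation collapses the matrix. The step I expect to be the main obstacle is precisely this last bookkeeping --- keeping careful track of which exponential factors get swallowed by the passage between the two diagonalizing frames, which ones survive, and which are negligible, so that every ``$\approx$'' really compares quantities of the same order of growth; the underlying linear algebra is entirely routine.
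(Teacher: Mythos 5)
Your overall strategy is the same as the paper's: Proposition~\ref{prop:asymptotic_h_i} leaves only the ratio $v_2:w_2$ free, and the remaining intertwining condition coming from the monodromy of $V|_{S_2}$ around $\xi_1$ (matched against $R'_1$) pins it down. Where you diverge from the paper is in how you exploit this condition, and the divergence is where a real gap opens.

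The paper's observation is that the $(1,2)$-entry of $R'_1 = A_2R_2$ is simply $c_2^+ = \sqrt{-1}$, \emph{independently} of $l_2,u_2,w_2$. Consequently, one only needs the $(1,2)$-entry of $h_2\,\RH(\nabla_{\sqrt R})[\xi_1]\,h_2^{-1}$, computed directly from the $\xi_1(0)$-frame matrix~\eqref{eq:RH_xi}, and the parameter you call $\Theta$ never appears. You instead match all four entries, which forces you to carry the $\cosh\Theta$, $\sinh^2\Theta$ asymptotics of $R'_1$, and then you need to evaluate $\Theta$. At this point you claim $\Theta := 4\sqrt R\,\Re(\pi_2-\pi_1)+2\int_{\eta_2-\eta_1}B_{\L_\E}$ equals $8\sqrt{Rs_2}\,\Re(\sqrt{\tau_2}-\sqrt{\tau_1})$, with the surplus $4\sqrt R\int_{\eta_2-\eta_1}\Re Z_++2\int_{\eta_2-\eta_1}B_{\L_\E}$ ``absorbed by the change of frame.'' That is not a numerical identity, and no mechanism is offered for the absorption. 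Symptomatically, your announced asymptotics for the monodromy of $V|_{S_2}$ around $\xi_1$ --- off-diagonal entries $\sqrt{-1}\,e^{\pm(\Theta - 8\sqrt{Rs_2}\Re\sqrt{\tau_2})}$ --- do not agree with what Proposition~\ref{prop:monodromy_zeta2} actually yields in the unit diagonalizing frame at $x_2$, namely $\sqrt{-1}\,e^{\mp(4\sqrt R\,\Re\pi_1 + 2\int_{\eta_1}B_{\L_\E})}$; reconciling the two would require precisely the cancellation $4\sqrt R\int_{\eta_2}\Re Z_++2\int_{\eta_2}B_{\L_\E}=0$, which is false in general. The flagged ``bookkeeping'' step is therefore not routine: it is the proof. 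The clean route is the paper's --- evaluate both $\RH[\xi_1]$ and $\RH[\xi_2]$ in the $\xi_j(0)$-frame via~\eqref{eq:RH_xi}, and use only the $(1,2)$-entry condition, so that $\pi_j$, $B_{\L_\E}$, and hence $\Theta$ never enter.
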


\begin{proof}
By Proposition~\ref{prop:asymptotic_h_i}, we just need to find the values 
of  $v_2,w_2$; for this purpose, we will use the monodromy around the loop $\xi_1$.
Indeed, it is required that the $(1,2)$-entry of 
$h_2 \RH ( \nabla_{\sqrt{R}}) [ \xi_1 ] h_2^{-1}$ be equal to $\sqrt{-1}$. 
After elementary algebra, this is equivalent to the relation 
$$
  \frac{w_2}{v_2} = - \frac{\exp( 8 \sqrt{R s_2} \Re \sqrt{\tau_2})}{\sinh \left( 8 \sqrt{R s_2} \Re (\sqrt{\tau_1} - \sqrt{\tau_2}) \right)}   , 
$$
up to super-exponential error terms in the numerator and denominator. 
The pairs of solutions $[v_2 : w_2 ]\in\CP1$ of this equation are defined 
up to simultaneous rescaling by some element of $\C^{\times}$, and such a 
rescaling  does not affect the complex twist co-ordinates~\eqref{eq:twist}. 
Hence, we may set 
\begin{align*}
 v_2 & = \sinh \left( 8 \sqrt{R s_2} \Re (\sqrt{\tau_1} - \sqrt{\tau_2}) \right) \\
 w_2 & = - \exp( 8 \sqrt{R s_2} \Re \sqrt{\tau_2}) 
\end{align*}
and with these values we find the desired result. 
\end{proof}

\begin{prop}\label{prop:twist}
 Fix  $q \in S^3_1$ such that $\Re (\pi_2 - \pi_1) \neq 0$. 
 Then, the complex twist co-ordinate $[p_2 : q_2]$ associated to $(\E, \sqrt{R} \theta )$ converges to $[0:1]$ as $R\to \infty$ if the conditions  
 \begin{align*}
 \int_{\psi_2} \Re Z_+ & < 2 \Re ( 2 \sqrt{s_2 \tau_2} - \sqrt{s_2 \tau_1} - \sqrt{s_3 \tau_3})  \\
 \vert \Re ( \pi_1 - \pi_2) \vert & = 2 \sqrt{s_2} \Re (\sqrt{\tau_1} - \sqrt{\tau_2})
\end{align*}
hold for one choice of a square root $Z_+$ of $Q$ and 
$$
    \int_{\eta_2 - \eta_1} B_{\L_{\E }} \equiv 0 \pmod{\pi\sqrt{-1}}. 
$$
Specifically, we then have 
$$
    \frac{p_2}{q_2} \approx \exp \left( -8 \sqrt{R} \Re ( 2 \sqrt{s_2 \tau_2} - \sqrt{s_2 \tau_1} - 
  \sqrt{s_3 \tau_3}) + 2 \int_{\psi_2} \left( B_{\L_{\E }} + 2 \sqrt{R} \Re Z_+ \right) \right).
$$
Similarly, $[p_2 : q_2] \to [0:1]$ as $R\to \infty$ if 
\begin{align*}
 \int_{\psi_2} \Re Z_+ & < 2 \Re ( 2 \sqrt{s_2 \tau_2} - \sqrt{s_2 \tau_1} - \sqrt{s_3 \tau_3})  \\
 \vert \Re ( \pi_1 - \pi_2) \vert & = - 2 \sqrt{s_2} \Re (\sqrt{\tau_1} - \sqrt{\tau_2})
\end{align*}
hold for one choice of a square root $Z_+$ of $Q$ and 
$$
    \int_{\eta_2 - \eta_1} B_{\L_{\E }} \equiv \frac{\pi\sqrt{-1}}2 \pmod{\pi\sqrt{-1}}. 
$$
On the other hand, under the condition  
$$
    \vert \Re ( \pi_1 - \pi_2) \vert \neq 2 \sqrt{s_2} \Re (\sqrt{\tau_1} - \sqrt{\tau_2})
$$
$[p_2 : q_2]$ converges to $[1:0]$. 
\end{prop}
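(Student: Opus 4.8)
The plan is to compute the matrix $Q_2=A_3^{-1/2}U_3P_2U_2^{-1}$ of Section~\ref{sec:twist} to leading order as $R\to\infty$ from the asymptotics already established, and to read off $[p_2:q_2]$ from the shape~\eqref{eq:twist}. A preliminary observation trims the work considerably: the first rows of $A_3^{-1/2}$ and of $U_3$ are $((c_3^+)^{-1/2},0)$ and $(1,0)$, so the first row of $Q_2$ is $(c_3^+)^{-1/2}$ times the first row of $P_2U_2^{-1}$, and hence, using~\eqref{eq:U2} for $U_2^{-1}$,
\[
  \frac{p_2}{q_2}=\frac{(Q_2)_{11}}{(Q_2)_{12}}=\frac{(P_2U_2^{-1})_{11}}{(P_2U_2^{-1})_{12}}
  =\frac{(P_2)_{11}}{(P_2)_{12}}+\cosh\!\Big(4\sqrt R\,\Re(\pi_2-\pi_1)+2\!\int_{\eta_2-\eta_1}\!B_{\L_{\E}}\Big).
\]
Thus neither $U_3$ nor $A_3^{-1/2}$ nor the asymptotics of $l_3$ enter, and only $P_2=h_3\circ\psi_2\circ h_2^{-1}$ has to be understood.

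Next I would assemble $P_2$ from its three ingredients: Proposition~\ref{prop:asymptotic_h2} for $h_2^{-1}$; Proposition~\ref{prop:asymptotic_h_i} with $i=3$ for $h_3$ --- here only the ratio $(h_3)_{11}/(h_3)_{12}\approx-e^{8\sqrt{Rs_3}\Re\sqrt{\tau_3}}$ of its first-row entries is needed, so the undetermined constants $v_3,w_3$ disappear; and Theorem~\ref{thm:Moc} for the parallel transport $\psi_2$, which in a diagonalizing frame of the Higgs field is to leading order $\mathrm{diag}\big(e^{\int_{\psi_2}(\frac12B_{\det(\E)}+B_{\L_{\E}}+\sqrt R\,\Re Z_+)},\,e^{\int_{\psi_2}(\frac12B_{\det(\E)}-B_{\L_{\E}}-\sqrt R\,\Re Z_+)}\big)$, the central $\det(\E)$ contribution cancelling in every ratio. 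Multiplying the $2\times2$ matrices, $(P_2)_{11}/(P_2)_{12}$ comes out as $\sinh\!\big(8\sqrt{Rs_2}\,\Re(\sqrt{\tau_1}-\sqrt{\tau_2})\big)$ times a factor of the form $(\Psi+E)/(-\Psi+E)$, where $\Psi=e^{\int_{\psi_2}(B_{\L_{\E}}+\sqrt R\,\Re Z_+)}$ and $E/\Psi=e^{x}$ with $x=8\sqrt{Rs_2}\Re\sqrt{\tau_2}-8\sqrt{Rs_3}\Re\sqrt{\tau_3}-2\int_{\psi_2}(B_{\L_{\E}}+\sqrt R\,\Re Z_+)$. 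Expanding the $\sinh$ and the rational factor into exponentials, the exponents that appear are real-linear combinations of $\sqrt R\,\Re\sqrt{s_i\tau_i}$ and $\sqrt R\int_{\psi_2}\Re Z_+$, the purely imaginary periods $\int_{\eta_2-\eta_1}B_{\L_{\E}}$ and $\int_{\psi_2}B_{\L_{\E}}$ contributing only phase factors.

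Then I would run the case analysis on the competition of exponential rates. The $\cosh$ term has rate $4\sqrt R\,|\Re(\pi_2-\pi_1)|$ and the $\sinh$ factor rate $8\sqrt{Rs_2}\,|\Re(\sqrt{\tau_1}-\sqrt{\tau_2})|$; when $|\Re(\pi_1-\pi_2)|\neq 2\sqrt{s_2}\,\Re(\sqrt{\tau_1}-\sqrt{\tau_2})$ one of them strictly dominates in modulus (or, if the two rates coincide, the phases obstruct cancellation), so $p_2/q_2\to\infty$ and $[p_2:q_2]\to[1:0]$. When $|\Re(\pi_1-\pi_2)|=2\sqrt{s_2}\,\Re(\sqrt{\tau_1}-\sqrt{\tau_2})$ --- which forces $\Re(\sqrt{\tau_1}-\sqrt{\tau_2})\ge0$, the mirror case being $=-2\sqrt{s_2}\Re(\sqrt{\tau_1}-\sqrt{\tau_2})$ --- the leading exponentials of $(P_2)_{11}/(P_2)_{12}$ and of the $\cosh$ carry the same rate and opposite-signed phases; they cancel exactly precisely when $\int_{\eta_2-\eta_1}B_{\L_{\E}}\equiv0\pmod{\pi\sqrt{-1}}$ (resp.\ $\equiv\frac{\pi\sqrt{-1}}2$ in the mirror case), and then $p_2/q_2$ reduces to the next surviving exponential, whose exponent works out to be $-8\sqrt R\,\Re(2\sqrt{s_2\tau_2}-\sqrt{s_2\tau_1}-\sqrt{s_3\tau_3})+2\int_{\psi_2}(B_{\L_{\E}}+2\sqrt R\,\Re Z_+)$. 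Its real part tends to $-\infty$ exactly under the stated inequality $\int_{\psi_2}\Re Z_+<2\Re(2\sqrt{s_2\tau_2}-\sqrt{s_2\tau_1}-\sqrt{s_3\tau_3})$, whence $|p_2/q_2|\to0$, i.e.\ $[p_2:q_2]\to[0:1]$, with the displayed equivalent (the imaginary part of the exponent merely rotates $p_2/q_2$ in $\CP1$ and does not affect the limit).

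The step I expect to be the main obstacle is precisely this final bookkeeping: among the many exponentials produced by the triple product one must reliably identify the dominant one in each matrix entry and in each subcase, and above all verify that the two competing leading contributions to $p_2/q_2$ cancel \emph{exactly} on the critical locus --- it is this exact cancellation that pins down the phase congruences modulo $\pi\sqrt{-1}$ that single out the two special regimes. A minor additional point is to check that for $R\gg0$ one lands in the open locus $\Mod_{\Betti}(\vec c,\vec 0)'$ where the twist coordinates are defined and the isomorphisms $h_2,h_3$ of~\eqref{eq:hi} exist, which follows from $|l_2|\to\infty$ (Proposition~\ref{prop:monodromy_zeta2}) together with the stability input of~\cite[Corollary~10.3]{Sim}.
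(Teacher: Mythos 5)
Your proposal is correct and matches the paper's proof: the same reduction using the lower-triangularity of $A_3^{-1/2}U_3$ and of $U_2^{-1}$ to get $p_2/q_2=(P_2)_{11}/(P_2)_{12}-u_2$, the same assembly of $P_2$ from Propositions~\ref{prop:asymptotic_h2},~\ref{prop:asymptotic_h_i} and Theorem~\ref{thm:Moc}, and the same case-by-case comparison of exponential rates with the exact cancellation on the critical locus pinning down the phase congruences. The one small slip is that the exponent in your displayed parallel-transport matrix along $\psi_2$ should be $2\sqrt R\,\Re Z_+$ rather than $\sqrt R\,\Re Z_+$ once one rescales to unit-length diagonalizing frames at both endpoints (see the paper's discussion of the matrix $Q_j(\E,\sqrt R\theta)$); since you nevertheless carry the correct factor of $2$ through to the final formula, this is only a notational inconsistency and not a gap.
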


\begin{proof} 
 Our task is to compute 
\begin{equation}\label{eq:P2}
   P_2 = h_3 \circ \psi_2 \circ h_2^{-1}, 
\end{equation}
where $\psi_2$ stands for parallel transport map along the path $\psi_2$. 
Since constant factors multiplying $Q_i$ (or $P_i$) have no influence on the definition~\eqref{eq:twist} of the complex twist co-ordinates, from now on we will ignore constant factors; 
said differently, the formulas of the rest of this section hold in $\mbox{PGl}(2,\C)$. 
In particular, the exact value of the constanst $v_3,w_3$ appearing in Proposition~\ref{prop:asymptotic_h_i} will not be relevant, the essential information is that 
their ratio is well-defined. 
Just as in the proof of Proposition~\ref{prop:monodromy_zeta2}, with respect to 
suitable diagonalizing bases we have 
\begin{equation*}
  \psi_2 \approx \begin{pmatrix}
                  e^{\int_{\psi_2} \left( \frac 12 B_{\det (\E )} + B_{\L_{\E }} + \sqrt{R} \Re Z_+ (q, z)\right)} & 0 \\
                  0 & e^{\int_{\psi_2} \left( \frac 12 B_{\det (\E )} - B_{\L_{\E }} - \sqrt{R} \Re Z_+ (q, z)\right)}
                 \end{pmatrix} .
\end{equation*}
We take the frame at $\psi_2 (0) = x_2$ to consist of unit vectors, 
and then the above matrix expresses the action of parallel 
transport with respect to a basis at $\psi_2 (1) = x_3$ whose first and second 
vectors are respectively of length 
$$
  \exp \left(  \pm \sqrt{R} \int_{x_2}^{x_3} \Re Z_+ (q, z) \right). 
$$
It follows that the action of parallel transport along $\psi_2$, written in unit-length diagonalizing bases both at $x_2, x_3$, is described by the matrix 
$$
  \begin{pmatrix}
   e^{\int_{\psi_2} \left(\frac 12 B_{\det (\E )} + B_{\L_{\E }} + 2 \sqrt{R} \Re Z_+ (q, z)\right)}  & 0 \\
   0 & e^{\int_{\psi_2} \left(\frac 12 B_{\det (\E )} - B_{\L_{\E }} - 2 \sqrt{R} \Re Z_+ (q, z)\right)}
  \end{pmatrix}  .
$$
For ease of notation, from now on we will drop the argument of $Z_+$ and the term 
$\frac 12 B_{\det (\E )}$ in the argument of the exponential 
(remember that we work in $\mbox{PGl}(2,\C)$). 

Now, by Proposition~\ref{prop:asymptotic_h2} the growth orders of the entries of 
$\psi_2 h_2^{-1}$ are given by  
\begin{equation*}
    \begin{pmatrix}
    \sinh \left( 8 \sqrt{R s_2} \Re (\sqrt{\tau_1} - \sqrt{\tau_2}) \right) 
    e^{\int_{\psi_2} \left( B_{\L_{\E }} + 2 \sqrt{R} \Re Z_+ \right)}
    & - e^{\int_{\psi_2} \left( B_{\L_{\E }} + 2 \sqrt{R} \Re Z_+ \right)} \\
    - e^{8\sqrt{R s_2} \Re \sqrt{\tau_2} - \int_{\psi_2} \left( B_{\L_{\E }} + 2 \sqrt{R} \Re Z_+ \right)} \sinh \left( 8 \sqrt{R s_2} \Re (\sqrt{\tau_1} - \sqrt{\tau_2}) \right) \\
    & - e^{8 \sqrt{R s_2} \Re \sqrt{\tau_2} -\int_{\psi_2} \left( B_{\L_{\E }} + 2 \sqrt{R} \Re Z_+ \right)}
    \end{pmatrix}.
\end{equation*}
Using Proposition~\ref{prop:asymptotic_h_i}, the $(1,1)$-entry of~\eqref{eq:P2} is then of the form 
\begin{align}
  & \frac 1{2 v_3} e^{\int_{\psi_2} \left( B_{\L_{\E }} + 2 \sqrt{R} \Re Z_+ \right)} 
  \sinh \left( 8 \sqrt{R s_2} \Re (\sqrt{\tau_1} - \sqrt{\tau_2}) \right) \label{eq:P2(1,1)_1} \\
  & + \frac 1{2 v_3} e^{8\sqrt{R} \Re ( \sqrt{s_2 \tau_2} - \sqrt{s_3 \tau_3} )  -\int_{\psi_2} \left( B_{\L_{\E }} + 2 \sqrt{R} \Re Z_+ \right)} 
  \sinh \left( 8 \sqrt{R s_2} \Re (\sqrt{\tau_1} - \sqrt{\tau_2}) \right), \label{eq:P2(1,1)_2}
\end{align}
and its $(1,2)$-entry is 
\begin{equation}
  - \frac 1{2 v_3} e^{\int_{\psi_2} \left( B_{\L_{\E }} + 2 \sqrt{R} \Re Z_+ \right)}  
  + \frac 1{2 v_3}  e^{8 \sqrt{R} \Re ( \sqrt{ s_2 \tau_2} - \sqrt{s_3 \tau_3} ) - 
  \int_{\psi_2} \left( B_{\L_{\E }} + 2 \sqrt{R} \Re Z_+ \right)}. \label{eq:P2(1,2)}
\end{equation}

We now turn to computing the ratio of the entries of the first row of the matrix 
\begin{equation}\label{eq:Q2}
 Q_2 = A_3^{-\frac 12} U_3 P_2 U_2^{-1}. 
\end{equation}
Since $A_3^{-\frac 12} U_3$ is lower triangular, left multiplication by this matrix does not affect the quotient of the entries in the first row, so we may ignore this factor. 
On the other hand, if $\Re ( \pi_1 - \pi_2) \neq 0$ then we have by~\eqref{eq:U2} 
$$
  U_2^{-1} =  \begin{pmatrix}
          1 & 0 \\
          - u_2 & 1
         \end{pmatrix}
         \approx 
         \begin{pmatrix}
          1 & 0 \\
          \cosh \left( 4 \sqrt{R} \Re ( \pi_2 - \pi_1) + 2 \int_{\eta_2 - \eta_1}  B_{\L_{\E }} \right) & 1
         \end{pmatrix}. 
$$
The $(1,1)$-entry of~\eqref{eq:Q2} is of the form 
\begin{align}
  p_2 = & \frac 1{2 v_3} e^{\int_{\psi_2} \left( B_{\L_{\E }} + 2 \sqrt{R} \Re Z_+ \right)} 
  \sinh \left( 8 \sqrt{R s_2} \Re (\sqrt{\tau_1} - \sqrt{\tau_2}) \right) \label{eq:Q2(1,1)_1} \\
  & + \frac 1{2 v_3} e^{8\sqrt{R} \Re ( \sqrt{ s_2 \tau_2} - \sqrt{ s_3 \tau_3} )  - \int_{\psi_2} \left( B_{\L_{\E }} + 2 \sqrt{R} \Re Z_+ \right)} 
  \sinh \left( 8 \sqrt{R s_2} \Re (\sqrt{\tau_1} - \sqrt{\tau_2}) \right) \label{eq:Q2(1,1)_2}\\
  & + \frac 1{2 v_3} 
  \cosh \left( 4 \sqrt{R} \Re ( \pi_2 - \pi_1) + 2 \int_{\eta_2 - \eta_1}  B_{\L_{\E }} \right)
  e^{\int_{\psi_2} \left( B_{\L_{\E }} + 2 \sqrt{R} \Re Z_+ \right)} \label{eq:Q2(1,1)_3} \\
  & - \frac 1{2 v_3} 
  \cosh \left( 4 \sqrt{R} \Re ( \pi_2 - \pi_1) + 2 \int_{\eta_2 - \eta_1}  B_{\L_{\E }} \right)
  e^{8 \sqrt{R} \Re ( \sqrt{s_2 \tau_2} - \sqrt{s_3 \tau_3} ) - 
  \int_{\psi_2} \left( B_{\L_{\E }} + 2 \sqrt{R} \Re Z_+ \right)}, \label{eq:Q2(1,1)_4} 
\end{align}
and its $(1,2)$-entry $q_2$ agrees with~\eqref{eq:P2(1,2)}. 
We now analyze cases depending on the possible relationships of the coefficients of 
$\sqrt{R}$ in the arguments. Fix some $0 < \varepsilon < \frac{\pi}2$ and let 
$c\in\C$ satisfy 
$$
    \operatorname{Arg} (\pm c) \in \left[ -\frac{\pi}2 + \varepsilon 
    , \frac{\pi}2 - \varepsilon \right].
$$
We will then use that as $| \Re c | \to \infty$ we have 
\begin{align*}
     2 \cosh (c) & \approx e^{\vert \Re c \vert} (\cos (\Im c) + \sqrt{-1} \sgn(\Re c) 
     \sin (\Im c)) \\
     2 \sinh (c) & \approx e^{\vert \Re c \vert} (\sgn(\Re c) \cos (\Im c) + \sqrt{-1} 
     \sin (\Im c)), 
\end{align*}
where $\approx$ is used to denote expressions having the same asymptotic expansion 
as $R\to \infty$. 

\begin{enumerate}
 \item \label{case:1} If the inequalities 
\begin{align*}
 \int_{\psi_2} \Re Z_+ & < 2 \Re ( \sqrt{s_2 \tau_2}- \sqrt{s_3 \tau_3} ) \\
 2 \sqrt{s_2} | \Re (\sqrt{\tau_1} - \sqrt{\tau_2})| & < \vert \Re ( \pi_1 - \pi_2) \vert
\end{align*}
hold then the second term of~\eqref{eq:P2(1,2)} dominates its first term in absolute value, and~\eqref{eq:Q2(1,1)_4} dominates~\eqref{eq:Q2(1,1)_1},~\eqref{eq:Q2(1,1)_2} and~\eqref{eq:Q2(1,1)_3}. In this case, we have 
\begin{align*}
 [p_2 : q_2] & \approx \left[ 
 \cosh \left( 4 \sqrt{R} \Re ( \pi_2 - \pi_1) + 2 \int_{\eta_2 - \eta_1}  B_{\L_{\E }} \right) : - 1 \right] \\ 
  & \approx \left[ e^{4 \sqrt{R} | \Re ( \pi_2 - \pi_1)|} 
  \left( \cos \left( \frac 2{\sqrt{-1}} \int_{\eta_2 - \eta_1}  B_{\L_{\E }} \right) 
  + \sqrt{-1} \sgn (\Re ( \pi_2 - \pi_1)) \sin \left( \frac 2{\sqrt{-1}} \int_{\eta_2 - \eta_1}  B_{\L_{\E }} \right)
  \right) : - 2 \right] \\ 
 & \to [1 : 0] . 
\end{align*}
\item \label{case:2}
In case the inequalities 
\begin{align*}
 \int_{\psi_2} \Re Z_+ & < 2 \Re ( \sqrt{s_2 \tau_2}- \sqrt{s_3 \tau_3} ) \\
  \vert \Re ( \pi_1 - \pi_2) \vert & < 2 \sqrt{s_2} | \Re (\sqrt{\tau_1} - \sqrt{\tau_2})| ,
\end{align*}
hold then the second term of~\eqref{eq:P2(1,2)} dominates its first term in absolute value, 
and the dominant term of $p_2$ is~\eqref{eq:Q2(1,1)_2}. In this case, we get 
$$
    [p_2 : q_2] \approx \left[ 
    \sinh \left( 8 \sqrt{R s_2} \Re (\sqrt{\tau_1} - \sqrt{\tau_2}) \right) : 1 \right] 
    \to [1 : 0] . 
$$
\item \label{case:3} In the case 
\begin{align*}
 2 \Re ( \sqrt{s_2 \tau_2}- \sqrt{s_3 \tau_3} ) & < \int_{\psi_2} \Re Z_+ \\
 2 \sqrt{s_2} | \Re (\sqrt{\tau_1} - \sqrt{\tau_2})| & < \vert \Re ( \pi_1 - \pi_2) \vert
\end{align*}
the first term of~\eqref{eq:P2(1,2)} dominates its second term in absolute value, 
and the dominant term of $p_2$ is~\eqref{eq:Q2(1,1)_3}. In this case, we get 
\begin{align*}
 [p_2 : q_2] & \approx - u_2 \\
 & \approx \left[ 
 \cosh \left( 4 \sqrt{R} \Re ( \pi_2 - \pi_1) + 2 \int_{\eta_2 - \eta_1}  B_{\L_{\E }} \right)
 : - 1 \right] \\ 
 & \approx \left[ e^{4 \sqrt{R} | \Re ( \pi_2 - \pi_1)|} 
  \left( \cos \left( \frac 2{\sqrt{-1}} \int_{\eta_2 - \eta_1}  B_{\L_{\E }} \right) 
  + \sqrt{-1} \sgn (\Re ( \pi_2 - \pi_1)) \sin \left( \frac 2{\sqrt{-1}} \int_{\eta_2 - \eta_1}  B_{\L_{\E }} \right)
  \right) : - 2 \right] \\ 
 & \to [1 : 0] . 
\end{align*}
\item \label{case:4} In the case 
\begin{align*}
 2 \Re ( \sqrt{s_2 \tau_2}- \sqrt{s_3 \tau_3} ) & < \int_{\psi_2} \Re Z_+ \\
 \vert \Re ( \pi_1 - \pi_2) \vert & < 2 \sqrt{s_2} | \Re (\sqrt{\tau_1} - \sqrt{\tau_2})|
\end{align*}
the first term of~\eqref{eq:P2(1,2)} dominates its second term in absolute value, 
and the dominant term of $p_2$ is~\eqref{eq:Q2(1,1)_1}, so we have 
$$
 [p_2 : q_2] \approx \left[ \sinh \left( 8 \sqrt{R s_2} \Re (\sqrt{\tau_1} - \sqrt{\tau_2}) \right) : 1 \right] \to [1 : 0] . 
$$
\item \label{case:5} In the case 
\begin{align}
 \int_{\psi_2} \Re Z_+ & < 2 \Re ( \sqrt{s_2 \tau_2}- \sqrt{s_3 \tau_3} )
 \label{eq:inequality1} \\
 \vert \Re ( \pi_1 - \pi_2) \vert & = 2 \sqrt{s_2} \Re (\sqrt{\tau_1} - \sqrt{\tau_2}) \label{eq:inequality2} \\
 \int_{\eta_2 - \eta_1} B_{\L_{\E }} & \equiv 0 \pmod{\pi\sqrt{-1}} 
\end{align}
the dominant term of $q_2$ is the second term in~\eqref{eq:P2(1,2)}, 
and the terms~\eqref{eq:Q2(1,1)_2} and~\eqref{eq:Q2(1,1)_4} cancel each other 
up to terms of order 
$$
    o( e^{8\sqrt{R} \Re ( \sqrt{s_2 \tau_2}- \sqrt{s_3 \tau_3} ) - 2\int_{\psi_2} \sqrt{R} \Re Z_+}  )
$$
(see Proposition~\ref{prop:monodromy_zeta2}), 
while the terms~\eqref{eq:Q2(1,1)_1} and~\eqref{eq:Q2(1,1)_3} are of the same order of growth, 
and add up to the double of each of them. We then have 
$$
 \frac{p_2}{q_2} \approx  \frac
 {\exp \left( 4\sqrt{R} \vert \Re ( \pi_1 - \pi_2) \vert + \int_{\psi_2} \left( B_{\L_{\E }} + 2 \sqrt{R} \Re Z_+ \right)\right)}{\exp\left( 8\sqrt{R} \Re ( \sqrt{s_2 \tau_2}- \sqrt{s_3 \tau_3} ) - \int_{\psi_2} \left( B_{\L_{\E }} + 2 \sqrt{R} \Re Z_+ \right)\right)} + o (1).   
$$
Now, assuming the inequality 
$$
    2 \Re ( \sqrt{s_2 \tau_2}- \sqrt{s_3 \tau_3} ) - \int_{\psi_2} \Re Z_+ > \vert \Re ( \pi_1 - \pi_2) \vert = 2 \sqrt{s_2} \Re (\sqrt{\tau_1} - \sqrt{\tau_2}),
$$
or equivalently, 
\begin{equation}\label{eq:inequality3}
  \int_{\psi_2} \Re Z_+ <  2 \Re ( 2 \sqrt{s_2 \tau_2} - \sqrt{s_2 \tau_1} - 
  \sqrt{s_3 \tau_3}) , 
\end{equation}
then we get 
\begin{align*}
     [p_2 : q_2] & \approx \left[ 
     \exp \left( -8 \sqrt{R} \Re ( 2 \sqrt{s_2 \tau_2} - \sqrt{s_2 \tau_1} - 
  \sqrt{s_3 \tau_3}) + 2 \int_{\psi_2} \left( B_{\L_{\E }} + 2 \sqrt{R} \Re Z_+ \right) \right) : 1 \right] \\
     & \to [0:1]
\end{align*}
Notice that in view of the assumption  $\Re ( \pi_1 - \pi_2) \neq 0$
and~\eqref{eq:inequality2}, the inequality~\eqref{eq:inequality3} is stronger than~\eqref{eq:inequality1}. 
\item \label{case:6} 
In the case 
\begin{align*}
 \int_{\psi_2} \Re Z_+ & < 2 \Re ( \sqrt{s_2 \tau_2}- \sqrt{s_3 \tau_3} )\\
 \vert \Re ( \pi_1 - \pi_2) \vert & = 2 \sqrt{s_2} \Re (\sqrt{\tau_1} - \sqrt{\tau_2})\\
 \int_{\eta_2 - \eta_1} B_{\L_{\E }} & \not{\!\!\equiv} 0 \pmod{\pi\sqrt{-1}} 
\end{align*}
the cancellation of case~\eqref{case:5} does not occur and we have 
$$
    [p_2 : q_2] \approx - u_2 \to [1:0]. 
$$
\item \label{case:7} In the case 
\begin{align*}
 \int_{\psi_2} \Re Z_+ & < 2 \Re ( \sqrt{s_2 \tau_2}- \sqrt{s_3 \tau_3} )
 \\
 \vert \Re ( \pi_1 - \pi_2) \vert & = - 2 \sqrt{s_2} \Re (\sqrt{\tau_1} - \sqrt{\tau_2}) 
 \\
 \int_{\eta_2 - \eta_1} B_{\L_{\E }} & \equiv \frac{\pi\sqrt{-1}}2 \pmod{\pi\sqrt{-1}} 
\end{align*}
the analysis is similar to case~\eqref{case:5} up to sign differences. 
Namely, the dominant term of $q_2$ is the second term in~\eqref{eq:P2(1,2)}, 
but now we have 
$$
    \exp \left( 2 \int_{\eta_2 - \eta_1} B_{\L_{\E }} \right) = -1.
$$ 
The terms~\eqref{eq:Q2(1,1)_2} and~\eqref{eq:Q2(1,1)_4} again cancel each other 
and~\eqref{eq:Q2(1,1)_1},~\eqref{eq:Q2(1,1)_3} are asymptotically equal to each other,
hence the behaviour of $[p_2 : q_2]$ agrees (up to a global sign) with the one given in
case~\eqref{case:5}. 
Namely, under condition~\eqref{eq:inequality3} we get $[p_2 : q_2]\to [0:1]$ 
with the same exponential decay rate as in case~\eqref{case:5}. 
\item \label{case:8} In the case 
\begin{align*}
 \int_{\psi_2} \Re Z_+ & < 2 \Re ( \sqrt{s_2 \tau_2}- \sqrt{s_3 \tau_3} ) \\
 \vert \Re ( \pi_1 - \pi_2) \vert & = -2 \sqrt{s_2} \Re (\sqrt{\tau_1} - \sqrt{\tau_2})\\
 \int_{\eta_2 - \eta_1} B_{\L_{\E }}  & \not{\!\!\equiv} \frac{\pi\sqrt{-1}}2  \pmod{\pi\sqrt{-1}} 
\end{align*}
the dominant term of $q_2$ is the second term of~\eqref{eq:P2(1,2)}, 
while the terms~\eqref{eq:Q2(1,1)_2} and~\eqref{eq:Q2(1,1)_4} are of the same order 
of growth and dominate~\eqref{eq:Q2(1,1)_1} and~\eqref{eq:Q2(1,1)_3}.
Fixing an arbitrary $\varepsilon > 0$, for all $q$ such that 
$$ 
   \left\vert 1 + \exp \left( 2 \int_{\eta_2 - \eta_1} B_{\L_{\E }} \right) \right\vert
   > \varepsilon ,
$$
we get 
$$
 \frac{p_2}{q_2} \approx \left[ 
 \exp \left( 4 \sqrt{R} | \Re ( \pi_2 - \pi_1)|\right)  
 \left( - 1 - \exp \left( 2 \int_{\eta_2 - \eta_1}  B_{\L_{\E }} \right) \right) : 1 \right] 
 \to [1 : 0] . 
$$
\end{enumerate}
When 
\begin{align*}
  2 \Re ( \sqrt{s_2 \tau_2}- \sqrt{s_3 \tau_3} ) & < \int_{\psi_2} \Re Z_+ \\
 \vert \Re ( \pi_1 - \pi_2) \vert & =\pm 2\sqrt{s_2} \Re (\sqrt{\tau_1} - \sqrt{\tau_2}), 
\end{align*}
then the analysis is similar to cases~\eqref{case:5}--\eqref{case:8}, 
depending on whether or not 
$$ 
\int_{\eta_2 - \eta_1} B_{\L_{\E }}  \equiv 0 \pmod{\pi\sqrt{-1}} 
$$
(in case the second condition holds with a $+$ sign), and on whether or not 
$$
\int_{\eta_2 - \eta_1} B_{\L_{\E }}  \equiv \frac{\pi\sqrt{-1}}2  \pmod{\pi\sqrt{-1}} 
$$
(in case the second condition holds with a $-$ sign). 
\end{proof}

\begin{remark}\label{rem:change_sign}
 Notice that some of the inequalities appearing in the above cases depend on the choice of square root $Z_+$, see~\eqref{eq:Z_tilde2}. 
 Changing sign of $Z_+$  results in the following transformations: 
 \begin{align*}
  \pi_j & \mapsto - \pi_j \\
  \vert & \Re ( \pi_1 - \pi_2) \vert \mapsto  \vert \Re ( \pi_2 - \pi_1) \vert =  \vert \Re ( \pi_1 - \pi_2) \vert \\
  \tau_j & \mapsto - \tau_j \\
  \int_{\psi_2} \Re Z_+ & \mapsto - \int_{\psi_2} \Re Z_+ .
 \end{align*}
 It is immediate to see for instance that these transformations pairwise interchange cases~\eqref{case:1} with~\eqref{case:3} and~\eqref{case:2} with~\eqref{case:4}. 
 Importantly however, the asymptotic behaviours we have found in these 
 corresponding cases agree with each other, because $\cosh$ is an even function 
 and $\sinh$ is odd. 
 As a matter of fact, the cases that we have not completely worked out in the 
 above argument arise via these changes of signs from the 
 cases~\eqref{case:5},~\eqref{case:6},~\eqref{case:7} and~\eqref{case:8} 
 in a similar way than~\eqref{case:3} arises from~\eqref{case:1}. 
 In particular, doing the computations in each of the missig cases, we would 
 find that the asymptotic formulas for $[p_2:q_2]$ agree with the ones that we 
 have already found. For us the case~\eqref{case:5} will be of particular interest.
\end{remark}

We will also need an analogous statement to Proposition~\ref{prop:twist} 
for the complex twist co-ordinate $[p_3 : q_3]$. 
\begin{prop}\label{prop:twist3}
 Fix  $q \in S^3_1$ such that $\Re (\pi_4 - \pi_0) \neq 0$. 
 Then, the complex twist co-ordinate $[p_3 : q_3]$ associated to $(\E, \sqrt{R} \theta )$ 
 converges to $[0:1]$ as $R\to \infty$ if the conditions  
 \begin{align}
 \int_{\psi_3} \Re Z_+ & > 2 \Re ( \sqrt{s_3 \tau_3} + \sqrt{s_4 \tau_0} -
 2 \sqrt{s_4 \tau_4}  ) \label{eq:inequality4} \\
 | \Re (\pi_0 - \pi_4)| & = 2 \sqrt{s_4} \Re (\sqrt{\tau_0} - \sqrt{\tau_4}) \notag \\
 \int_{\eta_0 - \eta_4} B_{\L_{\E }} & \equiv 0 \pmod{\pi\sqrt{-1}} \notag
\end{align}
are met; specifically, we then have 
$$
    \frac{p_3}{q_3} \approx \exp 
    \left( - 2 \int_{\psi_3} ( B_{\L_{\E }} + 2 \sqrt{R} \Re Z_+) 
    +  8 \sqrt{R} \Re ( \sqrt{s_3 \tau_3} + \sqrt{s_4 \tau_0} - 2 \sqrt{s_4 \tau_4} )  \right).
$$
Meanwhile, $[p_3 : q_3]\to [1:0]$ if 
$$
    | \Re (\pi_0 - \pi_4)| \neq 2 \sqrt{s_4} | \Re (\sqrt{\tau_0} - \sqrt{\tau_4})|.
$$
\end{prop}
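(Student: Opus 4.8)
The plan is to run the computation of the proof of Proposition~\ref{prop:twist} with the middle and third pairs of pants $S_3, S_4$ taking over the roles that $S_2, S_3$ played there. By~\eqref{eq:Q2} with the index shifted by one, one must compute
$$
  Q_3 = A_4^{-\frac 12} U_4 P_3 U_3^{-1}, \qquad P_3 = h_4 \circ \psi_3 \circ h_3^{-1},
$$
and read $[p_3 : q_3]$ off the ratio of the entries in the first row of $Q_3$. Since $A_4^{-\frac 12} U_4$ is lower triangular, this ratio equals the first-row ratio of $h_4 \circ \psi_3 \circ h_3^{-1} \circ U_3^{-1}$, so only $h_4$, $\psi_3$, $h_3^{-1}$ and $U_3^{-1}$ enter.

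First I would assemble the ingredients. Proposition~\ref{prop:asymptotic_h_i} with $i=3$ and $i=4$ gives the asymptotic shape of $h_3^{-1}$ and $h_4^{-1}$ up to constants $v_3, w_3$ and $v_4, w_4$; exactly as in the proof of Proposition~\ref{prop:twist}, the far-left factor $h_4$ contributes only an overall scalar $\tfrac 1{2v_4}$ to the first row of $P_3 U_3^{-1}$, so $v_4$ cancels in $[p_3:q_3]$ and $w_4$ never appears, whereas the ratio $v_3/w_3$ genuinely enters here and must be pinned down by an analogue of Proposition~\ref{prop:asymptotic_h2}: the monodromy of $V_3(l_2,l_3)$ around $\rho_2$ is $R'_2$, whose $(1,2)$-entry is $c_3^+ = \sqrt{-1}$, and imposing that the $(1,2)$-entry of $h_3\,\RH(\nabla_{\sqrt R})[\rho_2]\,h_3^{-1}$ equal $\sqrt{-1}$ determines $v_3/w_3$; here $\RH(\nabla_{\sqrt R})[\rho_2]$ is conjugate to the matrix $\RH(\nabla_{\sqrt R})[\zeta_2]$ whose entries were computed in the proof of Proposition~\ref{prop:monodromy_zeta2}. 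Next, as in that proof, parallel transport along $\psi_3$ written in unit-length diagonalizing frames at $x_3$ and $x_4$ is, in $\mbox{PGl}(2,\C)$, the diagonal matrix with entries $e^{\pm\int_{\psi_3}(B_{\L_\E} + 2\sqrt R\,\Re Z_+)}$. Finally $U_3^{-1}$ is lower triangular with off-diagonal entry $-u_3$, and by Proposition~\ref{prop:monodromy_zeta3} (which also supplies the cancellation identity used below) $u_3$ is asymptotic to a hyperbolic cosine of $4\sqrt R\,\Re(\pi_0 - \pi_4) + 2\int_{\eta_0-\eta_4} B_{\L_\E}$.

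I would then multiply out $P_3 = h_4 \psi_3 h_3^{-1}$, obtaining its $(1,1)$- and $(1,2)$-entries as short sums of exponentials (the analogues of~\eqref{eq:P2(1,1)_1}--\eqref{eq:P2(1,2)}), and right-multiply by $U_3^{-1}$, which feeds the $u_3$-term into the $(1,1)$-entry $p_3$ (the analogues of~\eqref{eq:Q2(1,1)_1}--\eqref{eq:Q2(1,1)_4}) and leaves $q_3$ equal to the analogue of~\eqref{eq:P2(1,2)}. The proof then finishes by the same case analysis on the coefficients of $\sqrt R$ in the exponents: one compares $\int_{\psi_3}\Re Z_+$ with the pieces of $2\Re(\sqrt{s_3\tau_3} + \sqrt{s_4\tau_0} - 2\sqrt{s_4\tau_4})$, and $|\Re(\pi_0-\pi_4)|$ with $2\sqrt{s_4}|\Re(\sqrt{\tau_0}-\sqrt{\tau_4})|$, and in the borderline regime also the congruence class of $\int_{\eta_0-\eta_4} B_{\L_\E}$ modulo $\pi\sqrt{-1}$. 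When $|\Re(\pi_0-\pi_4)| \neq 2\sqrt{s_4}|\Re(\sqrt{\tau_0}-\sqrt{\tau_4})|$ the $u_3$-term dominates $p_3$ and $[p_3:q_3]\to[1:0]$; under~\eqref{eq:inequality4} together with $|\Re(\pi_0-\pi_4)| = 2\sqrt{s_4}\Re(\sqrt{\tau_0}-\sqrt{\tau_4})$ and $\int_{\eta_0-\eta_4}B_{\L_\E}\equiv 0\pmod{\pi\sqrt{-1}}$, the two hyperbolic-cosine terms of $p_3$ cancel (cf. case~\eqref{case:5} of Proposition~\ref{prop:twist}) and the surviving term yields the stated rate for $p_3/q_3$, whence $[p_3:q_3]\to[0:1]$; as in Remark~\ref{rem:change_sign}, changing the sign of $Z_+$ merely permutes the cases.

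The hard part is bookkeeping rather than analysis: one must track the orientation with which $\psi_3$ crosses the separating loop $\rho_3$ and the determination of the square root $Z_+$ along the \emph{middle} pair of pants $S_3$, whose puncture circle $\xi_3$ enters with the opposite sign convention to $\xi_1, \xi_2$. This is exactly what turns the combination $2\sqrt{s_2\tau_2}-\sqrt{s_2\tau_1}-\sqrt{s_3\tau_3}$ of Proposition~\ref{prop:twist} into $\sqrt{s_3\tau_3}+\sqrt{s_4\tau_0}-2\sqrt{s_4\tau_4}$, reverses the inequality to~\eqref{eq:inequality4} and negates the exponent in the rate for $p_3/q_3$, with $S_4$ now in the mirror-image role of $S_2$ (its reference puncture $\xi_4$ carrying the weight $-2$). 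One must also verify that the $v_3/w_3$-dependence drops out of $[p_3:q_3]$ after it is combined with the $h_4$-factor, and that it is the stated congruence on $\int_{\eta_0-\eta_4}B_{\L_\E}$ (rather than its complement) that produces, rather than destroys, the cancellation in $p_3$.
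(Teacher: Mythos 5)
Your plan follows the proof of Proposition~\ref{prop:twist} too literally: you read $[p_3:q_3]$ off the \emph{first row} of $Q_3 = A_4^{-\frac 12} U_4 P_3 U_3^{-1}$, which indeed reduces (since $A_4^{-\frac 12}U_4$ is lower triangular) to the first row of $P_3 U_3^{-1}$. The paper instead reads the \emph{second column}: since $U_3^{-1}$ is unipotent lower triangular, the second column of $Q_3$ agrees with that of $A_4^{-\frac 12}U_4 P_3$, which hands over $q_3$ (the $(1,2)$-entry) and $p_3 + l_3 q_3$ (the $(2,2)$-entry), and then $p_3 = (p_3+l_3 q_3) - l_3 q_3$. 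This is not a cosmetic choice. Two consequences of your route are glossed over in the proposal and one of them is a genuine error.

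First, the factor $U_3^{-1}$ carries $u_3 = \dfrac{l_2 - c_3^- l_3}{c_3^+ - c_3^-} = \dfrac{l_2 + \sqrt{-1}\,l_3}{2\sqrt{-1}}$, so it mixes \emph{both} complex length co-ordinates. Your assertion that ``$u_3$ is asymptotic to a hyperbolic cosine of $4\sqrt R\,\Re(\pi_0-\pi_4) + 2\int_{\eta_0-\eta_4}B_{\L_{\E}}$'' is false in general: whether the $l_2$- or the $l_3$-summand dominates depends on the relative sizes of $|\Re(\pi_1-\pi_2)|$ and $|\Re(\pi_0-\pi_4)|$, neither of which is constrained by the hypotheses of Proposition~\ref{prop:twist3}. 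The paper's second-column computation avoids $U_3^{-1}$ entirely and only encounters $u_4 = \dfrac{l_3}{2\sqrt{-1}}$ (because $l_4 = 0$), which is controlled by Proposition~\ref{prop:monodromy_zeta3} alone. As written, your case analysis would therefore be predicated on a wrong asymptotic and miss a whole regime in which $l_2\gg l_3$; the cancellation you invoke would then not take place between the terms you name.

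Second, the first-row route involves both columns of $h_3^{-1}$, hence the ratio $v_3/w_3$, which you therefore need to pin down. You propose to do this via the monodromy of $V_3(l_2,l_3)$ around $\rho_2$, whose $(1,2)$-entry is $c_3^+$. That is a valid constraint in principle, but $\rho_2$ is a separating loop encircling two punctures; $\RH(\nabla_{\sqrt R})[\rho_2]$ is the product~\eqref{eq:product_RH}, and extracting a single entry of it to exponentially sharp accuracy is a genuinely harder computation than the single-puncture analysis of Proposition~\ref{prop:asymptotic_h2}. The paper's route only ever needs $v_4/w_4$, determined by the monodromy around the puncture circle $\rho_4 = \xi_0$, which is of exactly the same type as Proposition~\ref{prop:asymptotic_h2} and costs nothing new. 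In short: your route is not wrong in principle, but the $u_3$ asymptotic is an error, and the extra $v_3/w_3$ and $l_2$ bookkeeping it forces are precisely what the paper's second-column trick is designed to avoid.
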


\begin{proof}
 This is similar to Proposition~\ref{prop:twist}, yet we will give details for the 
 sake of completeness. 
 We start by determining $[v_4 : w_4]$ along the lines of 
 Proposition~\ref{prop:asymptotic_h2}. Namely, from the assumption that 
 the $(1,2)$-entry of the monodromy of $V_4$ around $\rho_4 = \xi_0$ is equal 
 to $1$, knowing that the radius of this circle is $s_4$, we find 
 \begin{align*}
 v_4 & = \sinh \left( 8 \sqrt{R s_4} \Re (\sqrt{\tau_0} - \sqrt{\tau_4}) \right) \\
 w_4 & = \sqrt{-1} \exp( 8 \sqrt{R s_4} \Re \sqrt{\tau_4}) .
\end{align*}
The $(1,2)$-entry of $P_3 = h_4 \circ \psi_3 \circ h_3^{-1}$ is 
\begin{equation}\label{eq:P3(1,2)_1} 
 \frac{w_3}2 \frac{e^{\int_{\psi_3} ( B_{\L_{\E }} + 2 \sqrt{R} \Re Z_+) - 8 \sqrt{R s_3} \Re \sqrt{\tau_3}}}
 {\sinh \left( 8 \sqrt{R s_4} \Re (\sqrt{\tau_0} - \sqrt{\tau_4}) \right)} - \frac{w_3}2 \frac{e^{- 8\sqrt{R s_4} \Re \sqrt{\tau_4}  - 
 \int_{\psi_3} ( B_{\L_{\E }} + 2 \sqrt{R} \Re Z_+)}}
  {\sinh \left( 8 \sqrt{R s_4} \Re (\sqrt{\tau_0} - \sqrt{\tau_4}) \right)} 
\end{equation}
Furthermore, the $(2,2)$-entry of $P_3 = h_4 \circ \psi_3 \circ h_3^{-1}$ is equal to 
\begin{equation}\label{eq:P3(2,2)}
    - \sqrt{-1} \frac{w_3}2 \left( e^{- 8\sqrt{R} \Re \sqrt{s_3 \tau_3} 
    + \int_{\psi_3} ( B_{\L_{\E }} + 2 \sqrt{R} \Re Z_+)} 
    + e^{- 8\sqrt{R} \Re \sqrt{s_4 \tau_4}  - 
    \int_{\psi_3} ( B_{\L_{\E }} + 2 \sqrt{R} \Re Z_+)} \right).
\end{equation}

Given that $U_3$ is lower triangular with $(2,2)$-entry equal to $1$, the 
second column of $Q_3 = A_4^{-\frac 12} U_3 P_3 U_3^{-1}$ is the same as 
that of $A_4^{-\frac 12} U_3 P_3$. Therefore, in computing the second 
column of $Q_3$ we may ignore the effect of multiplication by $U_3^{-1}$ from 
the right. Moreover, taking into account $c_4^{\pm} = \pm \sqrt{-1}$ 
we have $l_4 = 0$ and $u_4 = \frac{l_3}{2 \sqrt{-1}}$. We also notice the 
relation 
\begin{equation}\label{eq:c4}
     \sqrt{-1} \sqrt{c_4^-} = \sqrt{c_4^+}. 
\end{equation}
Combining these, we find 
\begin{equation*}
    A_4^{-\frac 12} U_3 = \begin{pmatrix}
                           \sqrt{c_4^-} & 0 \\
                           \sqrt{c_4^+} \frac{l_3}{2 \sqrt{-1}} & \sqrt{c_4^+}
                        \end{pmatrix}
                        = \begin{pmatrix}
                           \sqrt{c_4^-} & 0 \\
                           \sqrt{c_4^-} \frac{l_3}{2} & \sqrt{c_4^+}
                        \end{pmatrix}
\end{equation*}
We deduce that the $(1,2)$-entry of $A_4^{-\frac 12} U_3 P_3$ reads as 
\begin{align}\label{eq:Q3(1,2)_1} 
 q_ 3 = & \pm \sqrt{c_4^-} w_3 e^{- 8 \sqrt{R s_4} | \Re (\sqrt{\tau_0} - \sqrt{\tau_4})| - 8 \sqrt{R s_3} \Re \sqrt{\tau_3} + 
  \int_{\psi_3} ( B_{\L_{\E }} + 2 \sqrt{R} \Re Z_+) } 
  \\ \label{eq:Q3(1,2)_2} 
  & \mp \sqrt{c_4^-} w_3 e^{- 8 \sqrt{R s_4} | \Re (\sqrt{\tau_0} - \sqrt{\tau_4})| - 
  8\sqrt{R s_4} \Re \sqrt{\tau_4}  - \int_{\psi_3} ( B_{\L_{\E }} + 2 \sqrt{R} \Re Z_+)}. 
\end{align}
The $(2,2)$-entry of $A_4^{-\frac 12} U_3 P_3$ is 
\begin{align}\notag
  p_3 + l_3 q_3 = &  \frac{\sqrt{c_4^+} w_3}{2 \sqrt{-1}} e^{- 8 \sqrt{R s_4} | \Re (\sqrt{\tau_0} - \sqrt{\tau_4})| + 
  \int_{\psi_3} ( B_{\L_{\E }} + 2 \sqrt{R} \Re Z_+) - 8 \sqrt{R s_3} \Re \sqrt{\tau_3} 
  + 4 \sqrt{R} \Re | \pi_4 - \pi_0 |} \\ \notag 
   & + \frac{\sqrt{c_4^+} w_3}{2 \sqrt{-1}}
  e^{- 8 \sqrt{R s_4} | \Re (\sqrt{\tau_0} - \sqrt{\tau_4})| - 8\sqrt{R s_4} 
  \Re \sqrt{\tau_4}  - \int_{\psi_3} ( B_{\L_{\E }} + 2 \sqrt{R} \Re Z_+)
  + 4 \sqrt{R} \Re | \pi_4 - \pi_0 |} \\ \notag 
  & -\sqrt{-1} \frac{\sqrt{c_4^+} w_3}2 
  e^{-8\sqrt{R s_3} \Re \sqrt{\tau_3} + \int_{\psi_3} ( B_{\L_{\E }} + 2 \sqrt{R} \Re Z_+)}\\
     & -\sqrt{-1} \frac{\sqrt{c_4^+} w_3}2 
  e^{-8\sqrt{R s_4} \Re \sqrt{\tau_4}  - \int_{\psi_3} ( B_{\L_{\E }} + 2 \sqrt{R} \Re Z_+)}.  \label{eq:Q3(2,2)_3}
\end{align}
We now consider the expression 
\begin{equation}\label{eq:p3t3q3}
     p_3 = ( p_3 + l_3 q_3 ) - l_3 q_3. 
\end{equation}
Just as in the proof of Proposition~\ref{prop:twist}, in case 
$$
    | \Re (\pi_0 - \pi_4)| \neq 2 \sqrt{s_4}| \Re (\sqrt{\tau_0} - \sqrt{\tau_4})| 
$$
or 
$$
    \int_{\eta_0 - \eta_4} B_{\L_{\E }} \not{\!\!\equiv} 0 \quad \mbox{or} \quad 
    \frac{\pi\sqrt{-1}}2  \pmod{\pi\sqrt{-1}} 
$$
neither of the terms~\eqref{eq:Q3(1,2)_1} and~\eqref{eq:Q3(1,2)_2} 
cancels any term of~\eqref{eq:Q3(2,2)_3}, and (depending on certain inequalities 
similar to those in the proof of Proposition~\ref{prop:twist}) we get 
$$
    [p_3 : q_3 ] \approx \left[ \cosh \left( 4\sqrt{R} \Re ( \pi_0 - \pi_4) 
    + 2 \int_{\eta_0 - \eta_4} B_{\L_{\E }} \right) : 1 \right] 
    \to [1 : 0] 
$$
or 
$$
    [p_3 : q_3 ] \approx \left[ e^{8 \sqrt{R s_4} | \Re (\sqrt{\tau_0} - \sqrt{\tau_4})|}
    : 1 \right] \to [1 : 0] 
$$
as $R\to\infty$. 

For the sake of concreteness let us now assume 
\begin{equation}\label{eq:tau0tau4positive}
   0 < | \Re (\pi_0 - \pi_4)| = 2 \sqrt{s_4} \Re (\sqrt{\tau_0} - \sqrt{\tau_4}).
\end{equation}
Then the signs in~\eqref{eq:Q3(1,2)_1} and~\eqref{eq:Q3(1,2)_2} are $+$ 
and $-$ respectively; the analysis in the case of opposite sign is completely analogous. 
In this case, we immediately see from~\eqref{eq:Q3(2,2)_3} that 
\begin{align}\label{eq:p3t3q3_expanded1}
 p_3 + l_3 q_3 = & -\sqrt{-1} \sqrt{c_4^+} w_3 
  e^{-8\sqrt{R s_3} \Re \sqrt{\tau_3} + \int_{\psi_3} ( B_{\L_{\E }} + 2 \sqrt{R} \Re Z_+)}\\
   & -\sqrt{-1} \sqrt{c_4^+} w_3 e^{- 8\sqrt{R s_4} \Re \sqrt{\tau_4}  - 
    \int_{\psi_3} ( B_{\L_{\E }} + 2 \sqrt{R} \Re Z_+)}.\label{eq:p3t3q3_expanded2}
\end{align}
Now, in view of~\eqref{eq:c4}, if 
$$
    \int_{\eta_0 - \eta_4} B_{\L_{\E }} \equiv 0 \pmod{\pi\sqrt{-1}} 
$$
then on the right-hand side of~\eqref{eq:p3t3q3} $l_3$ 
times the term~\eqref{eq:Q3(1,2)_2} cancels~\eqref{eq:p3t3q3_expanded2}, 
and $l_3$ times the term~\eqref{eq:Q3(1,2)_2} is equal 
to~\eqref{eq:p3t3q3_expanded1}. In sum, we then find 
\begin{equation}\label{eq:p3}
  p_3 =  2 \frac{\sqrt{c_4^+} w_3}{\sqrt{-1}} 
  e^{- 8\sqrt{R s_4} \Re \sqrt{\tau_4}  - 
  \int_{\psi_3} ( B_{\L_{\E }} + 2 \sqrt{R} \Re Z_+) }. 
\end{equation}
There are two cases to consider: first, when the leading-order term 
of $q_3$ is~\eqref{eq:Q3(1,2)_1} and second, when it is~\eqref{eq:Q3(1,2)_2}. 
The first of these possibilities holds if 
$$
    - 8 \Re \sqrt{s_3 \tau_3} + 2 \int_{\psi_3} \Re Z_+ 
$$
is larger than 
$$
    - 8 \Re \sqrt{s_4 \tau_4}  - 2 \int_{\psi_3} \Re Z_+ ,
$$
while the second possibility holds for the reverse relation. 
These relations may be equivalently expressed as 
\begin{equation}\label{eq:inequality_case1}
     \int_{\psi_3} \Re Z_+ > 2 \Re (\sqrt{s_3 \tau_3} - \sqrt{s_4 \tau_4} )
\end{equation}
and
$$
    \int_{\psi_3} \Re Z_+ < 2 \Re (\sqrt{s_3 \tau_3} - \sqrt{s_4 \tau_4} ).
$$
Let us assume~\eqref{eq:inequality_case1}; in this case 
using~\eqref{eq:tau0tau4positive} we have 
$$
    \frac{p_3}{q_3} \approx 2 \exp \left( 8 \sqrt{R} 
    \Re ( \sqrt{s_3 \tau_3} + \sqrt{s_4 \tau_0} - 2 \sqrt{s_4 \tau_4} ) 
    - \int_{\psi_3} ( B_{\L_{\E }} + 2 \sqrt{R} \Re Z_+)
    \right)  .  
$$
Now, the expression on the right-hand side converges to $0$ under the 
assumption~\eqref{eq:inequality4}. 
We finish by observing that because of the assumption~\eqref{eq:tau0tau4positive} 
this latter inequality implies~\eqref{eq:inequality_case1}. 
\end{proof}

\section{Proof of Theorem~\ref{thm:main}}\label{sec:proof}

From this point on the choices in~\eqref{eq:divisor} will be in effect. 
Choices of continuous parameters do not affect our results on homotopy types 
of maps, so this assumption is only made to make our arguments more concrete. 

\subsection{Geometry of period integrals}\label{sec:Geometry_period}

For arbitrary $s_2 > 0$ let $U_2 = U_2 (s_2) \subset S^3_1$ consist of the quadratic differentials $q$ satisfying the property 
\begin{equation*}
     0 \neq \pi_1 (q) - \pi_2 (q) \neq 
     \pm 2 \sqrt{s_2} (\sqrt{\tau_1} (q) - \sqrt{\tau_2}(q)).  
\end{equation*}
Then, $U_2$ is an everywhere dense analytic open subset. 
By definitions~\eqref{eq:tauj},~\eqref{eq:pij} the quantities $\pi_1(q), \pi_2 (q), \int_{\psi_2} \Re Z_+$ 
are defined for every $q\in U_2$ (up to an ambiguity of sign), and we have  
\begin{equation}\label{eq:action_Hopf_circle}
   \pi_j ( e^{\sqrt{-1} \varphi} q) = e^{\sqrt{-1} \varphi/2} \pi_j (q) , 
   \quad \tau_j ( e^{\sqrt{-1} \varphi} q) = e^{\sqrt{-1} \varphi/2} \tau_j (q).
\end{equation}
This shows in particular that $U_2$ consists of entire orbits of the Hopf map $t$, 
namely $q$ satisfies the defining conditions of $U_2$ if and only if the same 
holds for $e^{\sqrt{-1} \varphi} q$ for every $\varphi$. 
\begin{remark}
\begin{enumerate}
 \item 
To define $\pi_j$ one needs to make coherent choices of square roots of $Q$ 
over the points 
$$
     t(q) \in \mbox{Im} \psi_2 \cup \mbox{Im} \eta_1 \cup \mbox{Im} \eta_2 \cup \mbox{Im} \sigma_1 \cup \mbox{Im} \sigma_2; 
$$
for more details see the proof of Proposition~\ref{prop:twist_t1}.
\item 
The half-weight transformation~\eqref{eq:action_Hopf_circle} means in particular 
that at the end-points of the natural domain $[0,2\pi ]$ for $\varphi$
in the Hopf fibration, the quantities $\pi_j, \tau_j$ change sign. 
This amounts to changing the choice of square root $Z_+$ of $Q$. 
As we have already explained in Remark~\ref{rem:change_sign}, this change of 
choice does not affect the asymptotic behaviour that we have found for the 
complex twist Fenchel--Nielsen co-ordinates. 
\end{enumerate}
\end{remark}
An easy geometric argument using~\eqref{eq:action_Hopf_circle} shows that for every 
$q \in U_2$ there exists a unique value of $\varphi^* \in [0, 2\pi )$ fulfilling 
\begin{equation}\label{eq:phi*}
    \Re ( \pi_1 (e^{\sqrt{-1} \varphi^*} q) - \pi_2 (e^{\sqrt{-1} \varphi^*} q)) = 
    2 \sqrt{s_2} \Re (\sqrt{\tau_1} (e^{\sqrt{-1} \varphi^*} q) 
    - \sqrt{\tau_2} (e^{\sqrt{-1} \varphi^*} q)). 
\end{equation}
This specifies a unique point $e^{\sqrt{-1} \varphi^*} q$ in the Hopf fiber of $q$. 
Applying Proposition~\ref{prop:twist} to $e^{\sqrt{-1} \varphi^*} q$, the 
complex twist co-ordinate $[p_2\colon q_2]$ converges to $[0\colon 1]$ in case 
the further condtion~\eqref{eq:inequality3} of the proposition is fulfilled. 
Now, by the implicit function theorem, the point $e^{\sqrt{-1} \varphi^*} q$ 
depends smoothly on $q\in U_2$. Let 
\begin{align}\label{eq:section_S2}
 S_2\colon t( U_2 ) & \to S^3_1 \\
 [a:b] & \mapsto e^{\sqrt{-1} \varphi^* (q)} q \notag
\end{align}
be the corresponding smooth section of the Hopf fibration, where $q\in t^{-1} ([a:b])$ is an arbitrary point in the fiber of the 
Hopf fibration over $[a:b]$. 

\begin{prop}\label{prop:twist_t1}
Let $q = S_2 (t_1)$. Then $q$ belongs to $U_2 (s_2)$ for every $s_2 > 0$, 
and we have  $\Re ( \pi_1 (q) - \pi_2 (q) ) \neq 0$. 
Moreover, there exist distinct points $x_2,x_3 \in \CP1 \setminus D$ and 
$$
\rho = \rho (q, t_0, \ldots , t_4, x_2, x_3)>0
$$ 
such that for every $0 < s_2, s_3 < \rho$ the inequality~\eqref{eq:inequality3} holds. 
In particular, we then have $[p_2 : q_2]\to [0:1]$ as $R\to \infty$. 
\end{prop}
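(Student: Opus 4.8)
The plan is to exploit that $q=S_2(t_1)$ is a completely explicit point. Since $t(q)=t_1$ is the root of $az-b$ (see~\eqref{eq:Hopf}) and $t_1=0$, we have $b=0$, whence $\tau_1(q)=0$ by~\eqref{eq:tauj} and $Z_+$ extends holomorphically and without zeros across $t_1$. Writing $q=e^{\sqrt{-1}\varphi^*}q_0$ for the normalised representative $q_0$ ($a=1$) of the Hopf fibre over $t_1$, each of $Z_+$, the periods $\pi_j$ and the square roots $\sqrt{\tau_j}$ scales by the single factor $e^{\sqrt{-1}\varphi^*/2}$; in particular $Z_+=e^{\sqrt{-1}\varphi^*/2}\omega_0$ with $\omega_0=\mathrm{d}z/\sqrt{(z^2-1)(z^2-k^{-2})}$ a fixed holomorphic differential on the elliptic curve $y^2=(z^2-1)(z^2-k^{-2})$, and $\pi_j(q)=e^{\sqrt{-1}\varphi^*/2}\hat\pi_j$, $\sqrt{\tau_j(q)}=e^{\sqrt{-1}\varphi^*/2}\sqrt{\tau_j(q_0)}$ for constants attached to $q_0$. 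Using the symmetry $z\mapsto -z$ of~\eqref{eq:divisor} I would compute $\tau_2(q_0)=-\tau_3(q_0)=-k^2/\bigl(2(1-k^2)\bigr)<0$, so that $\Re\sqrt{\tau_2(q)}=-\nu\sin(\varphi^*/2)$ and $\Re\sqrt{\tau_3(q)}=\nu\cos(\varphi^*/2)$ with $\nu=k/\sqrt{2(1-k^2)}>0$; and I would fix $x_2,x_3$ and the arcs $\eta_1,\eta_2,\psi_2$, letting the disc radii tend to $0$, so that $\hat\pi_1-\hat\pi_2$ is a non-zero, non-purely-imaginary number and $\int_{\psi_2}\omega_0$ has a prescribed imaginary part — this freedom being available because $\omega_0$ has $\C$-independent periods.

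With this setup the defining relation~\eqref{eq:phi*} for $\varphi^*$ collapses, using $\tau_1(q)=0$, to the single real equation $\Re\bigl(e^{\sqrt{-1}\varphi^*/2}(\hat\pi_1-\hat\pi_2)\bigr)=2\sqrt{s_2}\,\nu\sin(\varphi^*/2)$, which has a unique solution $\varphi^*=\varphi^*(s_2)\in[0,2\pi)$, with $\varphi^*(s_2)$ tending as $s_2\to 0^+$ to the zero in $[0,2\pi)$ of $\cos\bigl(\varphi^*/2+\arg(\hat\pi_1-\hat\pi_2)\bigr)$. Since $\hat\pi_1-\hat\pi_2$ is not purely imaginary we get $\varphi^*\neq 0$, hence $\Re(\pi_1(q)-\pi_2(q))=2\sqrt{s_2}\,\nu\sin(\varphi^*/2)>0$ for every $s_2>0$; this is the second assertion, and it shows the equality~\eqref{eq:inequality2} holds at $q$ (which in any case holds identically along $S_2$ by construction). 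Moreover $\pi_1(q)-\pi_2(q)=e^{\sqrt{-1}\varphi^*/2}(\hat\pi_1-\hat\pi_2)$ while $\pm 2\sqrt{s_2}(\sqrt{\tau_1}-\sqrt{\tau_2})(q)$ is a non-zero purely imaginary multiple of $e^{\sqrt{-1}\varphi^*/2}$, so the two never coincide; combined with $\pi_1(q)\neq\pi_2(q)$ this gives $q\in U_2(s_2)$ for every $s_2>0$ (one also records that the whole Hopf fibre over $t_1$ lies in $U_2(s_2)$, so that $S_2(t_1)$ is in the domain of $S_2$), which is the first assertion.

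For~\eqref{eq:inequality3} the point is a comparison of orders as $(s_2,s_3)\to(0,0)$. With $\tau_1(q)=0$ the right-hand side equals $4\sqrt{s_2}\,\Re\sqrt{\tau_2(q)}-2\sqrt{s_3}\,\Re\sqrt{\tau_3(q)}=O(\sqrt{s_2}+\sqrt{s_3})\to 0$, whereas the left-hand side $\int_{\psi_2}\Re Z_+=\Re\bigl(e^{\sqrt{-1}\varphi^*/2}\!\int_{\psi_2}\omega_0\bigr)$ converges to a finite limit $L_0$ which, up to a sign determined by $\arg(\hat\pi_1-\hat\pi_2)$, equals $\Re\bigl(\sqrt{-1}\,e^{-\sqrt{-1}\arg(\hat\pi_1-\hat\pi_2)}\!\int_{\psi_2}\omega_0\bigr)$. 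Choosing the arc $\psi_2$ and the points $x_2,x_3$ so that $L_0<0$, the strict inequality~\eqref{eq:inequality3} holds for all $s_2,s_3$ smaller than some $\rho>0$ depending only on the divisor and the auxiliary choices — the third assertion. Since~\eqref{eq:inequality2} holds and $\Re(\pi_1-\pi_2)\neq 0$, all hypotheses of Proposition~\ref{prop:twist} in case~\eqref{case:5}, together with its sign-twin case~\eqref{case:7} for the other residue class of $\int_{\eta_2-\eta_1}B_{\L_{\E}}$, are verified at $q$, yielding $[p_2:q_2]\to[0:1]$ as $R\to\infty$.

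The real obstacle is this last invocation of Proposition~\ref{prop:twist}: because $t(q)=t_1\in D$ the spectral curve $X_q$ is nodal at $t_1$, so the fiducial-solution analysis behind Propositions~\ref{prop:flat_connection_form} and~\ref{prop:asymptotic_RH} — stated for $t(q)\notin D$ — does not literally apply to the loop $\xi_1$; concretely, around $\xi_1$ the monodromy of a diagonalising frame is $\mathrm{I}$ rather than $T$ (the nodal case of Proposition~\ref{prop:monodromy_frame1}), so the determination of $h_2^{-1}$ in Proposition~\ref{prop:asymptotic_h2} must be revisited. I would bridge this either by rerunning the $\xi_1$-analysis directly in the nodal normal form, where $Z_+$ is regular, the monodromy contribution is trivial, and the relevant asymptotics only simplify; or, more cheaply, by continuity: every condition above is either an open strict inequality or holds identically along $S_2$, hence persists for $q=S_2(t)$ with $t$ close to, but different from, $t_1$, where $X_q$ is smooth and Proposition~\ref{prop:twist} applies verbatim, and then one passes to the limit $t\to t_1$ using continuity of the complex twist co-ordinate on $\Mod_{\Betti}$.
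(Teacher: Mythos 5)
Your plan recovers the structure of the paper's proof: you specialize to $t(q)=t_1$, observe $\tau_1(q)=0$ so that $Q$ (hence $Z_+$) is regular and nonvanishing at $t_1$, reduce to the elliptic curve $y^2=(z^2-1)(z^2-k^{-2})$, compute $\tau_2(q_0)=-\tau_3(q_0)$ and $\hat\pi_1-\hat\pi_2=-kK(k)$, and track the single rotation angle $\varphi^*/2$ through the Hopf fibre. This matches the paper. Where you diverge is in the verification of~\eqref{eq:inequality3}: you argue by sending $s_2,s_3\to0^+$ so that the right-hand side tends to $0$, then note that $\int_{\psi_2}\Re Z_+=\Re\bigl(e^{\sqrt{-1}\varphi^*/2}\int_{\psi_2}\omega_0\bigr)$ tends to a finite limit $L_0$, and claim one can arrange $L_0<0$ by choosing the homotopy class of $\psi_2$ and the endpoints $x_2,x_3$. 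The paper instead fixes $x_2=-1-\varepsilon$, $x_3=1-\varepsilon$, evaluates the integrals as incomplete elliptic integrals, and reads off a quadrant condition that survives the rotation by $\varphi^*/2$; this gives explicit $\rho$ and, importantly, feeds forward into the proof of the main theorem, where the particular positions $\lim_{\varepsilon\to0}x_2=t_3$, $\lim_{\varepsilon\to0}x_3=t_2$ are used. Your period argument for $L_0<0$ is not wrong: the lattice of periods of $\omega_0$ has a purely imaginary generator $2\sqrt{-1}K'(k)$, and adding full periods to $\psi_2$ shifts $\Im\int_{\psi_2}\omega_0$ by $2K'(k)\Z$, so one can indeed make $\Re(-\sqrt{-1}\int_{\psi_2}\omega_0)$ negative; but you should say so explicitly, since a purely real period lattice would sink the argument. (There are also a couple of sign slips in $\Re\sqrt{\tau_2(q)}$, $\Re\sqrt{\tau_3(q)}$ and in the reduction of~\eqref{eq:phi*}, but they are innocuous.)

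The most substantive part of your write-up is the last paragraph, and you are right to flag it: $\tau_1(q)=0$ means the local coordinate $\tilde z_1=\tau_1(z-t_1)$ of~\eqref{eq:zj} degenerates, $r_1=|\tau_1|r_0=0$, and the asymptotic formula~\eqref{eq:RH_xi}, derived for $\tau_j\neq0$ and appearing implicitly in the determination of $v_2,w_2$ in Proposition~\ref{prop:asymptotic_h2}, is not available around $\xi_1$ (where, by the nodal branch of Proposition~\ref{prop:monodromy_frame1}, the frame monodromy is $\mathrm{I}$ rather than $T$). The paper's own proof invokes Proposition~\ref{prop:twist} without confronting this; so you have located a genuine gap rather than a mere pedantry. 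Of your two bridges, the continuity argument is the weaker one as stated: passing first $R\to\infty$ and then $t\to t_1$ requires a uniformity (in $t$ near $t_1$) of the error estimates in Theorems~\ref{thm:Moc} and~\ref{thm:MSWW6.7} which is not quoted and not obviously available — the constants $c_2,C_2,\mu$ there depend on $q$, and the degenerate $q$ is exactly where uniformity would need to be established. The direct route, redoing the $\xi_1$ analysis in the nodal normal form where $\theta$ is regular semisimple near $t_1$, is the honest fix and is in fact simpler than the generic case since there is no branching and no $T$; if you pursue this proposal further, that is the calculation to actually carry out.
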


\begin{proof}
 We recall our choices of logarithmic points~\eqref{eq:divisor}. 
 In this case the quantities to study become elliptic integrals of the first kind with eccentricity $k$ in Legendre canonical form.
 
 By assumption we need to consider homogeneous polynomials $q$ of degree $6$ such that $t(q) = t_1 = 0$, i.e. of the form 
 $$
  q (z,w) = a z^2 (z^2 - 1) \left( z^2 - \frac 1{k^2} \right)
 $$
 for some coefficients $a \in S^1$. 
 The parameter  $a$ can be identified 
 with $e^{\sqrt{-1} \varphi}$ where $\varphi$ is the parameter of the Hopf fiber. 
 Using this form and~\eqref{eq:Hopf} we get that the corresponding quadratic differential reads as 
  \begin{equation}\label{eq:quadratic_differential}
  Q (z) = \frac{a z^2 (z^2 - 1) \left( z^2 - \frac 1{k^2} \right) \mbox{d} z^{\otimes 2}}{z^2 (z^2 - 1)^2 \left( z^2 - \frac 1{k^2} \right)^2} 
  = \frac{a \mbox{d} z^{\otimes 2}}{(z^2 - 1) \left( z^2 - \frac 1{k^2} \right)} .
  \end{equation}
 The square-root of $Q$ is then given by 
 $$
  Z_+ = \sqrt{a} \frac{\mbox{d} z}{\sqrt{(z^2 - 1) \left( z^2 - \frac 1{k^2} \right)}}  .
 $$
 Here, we need to be precise about the determination of the square roots: 
 for $a=1$ we choose $\sqrt{a} = 1$ and in the denominator we choose 
 \begin{equation}\label{eq:sign_Z}
 \sqrt{(z^2 - 1) \left( z^2 - \frac 1{k^2} \right)} \in 
 \begin{cases}
  \R_- & \mbox{if } z < - \frac 1k \\
  \sqrt{-1} \R_- & \mbox{if } - \frac 1k < z < -1 \\
  \R_+ & \mbox{if } -1 < z < 1 \\
  \sqrt{-1} \R_+ & \mbox{if } 1 < z < \frac 1k \\
  \R_- & \mbox{if }  \frac 1k < z 
 \end{cases}.
 \end{equation}
 By definition~\eqref{eq:tauj} we find 
 $$
  - \tau_3 = \tau_2 = \frac a{2\left( 1 - \frac 1{k^2} \right)} < 0, \quad \tau_1 = 0. 
 $$
 For simplicity, throughout the proof we will thus omit to spell out terms containing 
 $\tau_1$. 
 From now on until further mention we set $a = 1$. 
 According to the choice of determination of $Z_+$~\eqref{eq:sign_Z} and 
 the choice~\eqref{eq:sign_choice} of square roots of $\tau_j$ we have 
 \begin{equation}\label{eq:sign_tau}
   \sqrt{\tau_3} \in \R_- , \quad \sqrt{\tau_2} \in \sqrt{-1} \R_- .
 \end{equation}
 Moreover, we have 
 \begin{align}
    \pi_1 - \pi_2 & = \int_{t_2}^{t_1} \frac{\mbox{d} z}{\sqrt{(z^2 - 1) \left( z^2 - \frac 1{k^2} \right)}} \notag \\ 
    & = - k K(k) \label{eq:complete_elliptic_integral}
 \end{align}
 where 
 $$
  K(k) = \int_0^1 \frac{\mbox{d} t}{\sqrt{(1 - t^2) ( 1 - k^2 t^2 )}} > 0 
 $$
 is the complete elliptic integral of the first kind with eccentricity $k$. 
 In particular, as $\pi_1 - \pi_2 \in \R_-$ and 
 $- \sqrt{\tau_2} \in \sqrt{-1} \R$, we trivially have 
 $q \in U_2 (s_2 )$ for every choice of $s_2 > 0$. 
 Now, as $\Re (\pi_1 - \pi_2 ) = \pi_1 - \pi_2 \neq 0$, we get the first statement. 
 
 \begin{tikzpicture}
  \draw [thick, ->] (-5,0)--(5,0) node[right]{$x$};
  \fill (-4,0) circle(2pt) node[below]{$t_0 = -\frac 1k$};
  \fill (-2,0) circle(2pt) node[below]{$t_3 = -1$};
  \fill (0,0) circle(2pt) node[below]{$t_1= 0$};
  \fill (2,0) circle(2pt) node[below]{$t_2 = 1$};
  \fill (4,0) circle(2pt) node[below]{$t_4 =\frac 1k$};
  \draw (-2.3,0) circle(2pt) node[above]{$x_2 = - 1 - \varepsilon$};
  \draw (1.7,0) circle(2pt) node[above]{$x_3 = 1 -  \varepsilon$};
 \end{tikzpicture}
 
 Let us now pick 
 $$
  x_2 = - 1 - \varepsilon, \quad  x_3 = 1 - \varepsilon
 $$
 for some $0 < \varepsilon \ll 1$.  
 The integrand being an even function, decomposing the path $\psi_2$ as 
 $$
 \left[ - 1 - \varepsilon, 1 - \varepsilon \right] = \left[ - 1 - \varepsilon, -1  \right] \cup [-1, 0] \cup \left[ 0, 1 - \varepsilon \right] 
 $$
 we see that   
 \begin{align}
  \int_{\psi_2} \frac{\mbox{d} z}{\sqrt{(z^2 - 1) \left( z^2 - \frac 1{k^2} \right)}} & =  
  \int_{- 1 - \varepsilon}^{1 - \varepsilon} \frac{\mbox{d} z}{\sqrt{(z^2 - 1) \left( z^2 - \frac 1{k^2} \right)}}  \notag \\
  & = \int_{- 1 - \varepsilon}^{-1} \frac{\mbox{d} z}{\sqrt{(z^2 - 1) \left( z^2 - \frac 1{k^2} \right)}} + k [ K(k) +  F(1 - \varepsilon; k) ]  \label{eq:integral}
 \end{align}
 where 
 $$
  F(x; k) = \int_0^x \frac{\mbox{d} t}{\sqrt{(1 - t^2) ( 1 - k^2 t^2 )}} 
 $$
 is the incomplete elliptic integral of the first kind with eccentricity $k$. 
 By~\eqref{eq:sign_Z}, the first integral of~\eqref{eq:integral} belongs to 
 $\sqrt{-1} \R_-$.  
 For fixed $\varepsilon > 0$ let us choose $\rho > 0$ such that for all 
 $0 < s_2, s_3 < \rho$ we have 
 $$
  |4 \sqrt{s_2 \tau_2}| < \left\vert \int_{- 1 - \varepsilon}^{-1} \frac{\mbox{d} z}{\sqrt{(z^2 - 1) \left( z^2 - \frac 1{k^2} \right)}} \right\vert 
 $$
 and 
 $$
  |2 \sqrt{s_3 \tau_3}| < k [ K(k) +  F(1 - \varepsilon; k) ] .
 $$
 We then find that the complex number 
 \begin{equation}\label{eq:4th_quadrant}
  \int_{- 1 - \varepsilon}^{1 - \varepsilon} \frac{\mbox{d} z}{\sqrt{(z^2 - 1) \left( z^2 - \frac 1{k^2} \right)}}  
  - 2 ( 2 \sqrt{s_2 \tau_2}   - \sqrt{s_3 \tau_3}  )
 \end{equation}
 belongs to the fourth quadrant, i.e. 
 \begin{align}\label{eq:4th_quadrant_Re}
 \Re \left( \int_{\psi_2}  Z_+ (q)
    - 2 ( 2 \sqrt{s_2 \tau_2} (q) - \sqrt{s_3 \tau_3} (q) ) \right) & > 0 , \\
    \label{eq:4th_quadrant_Im}
  \Im \left( \int_{\psi_2}  Z_+ (q)
    - 2 ( 2 \sqrt{s_2 \tau_2} (q) - \sqrt{s_3 \tau_3} (q) ) \right) & < 0 .
 \end{align}

 \begin{tikzpicture}[set style = {{help lines}+=[dashed]}]
  \draw[style = help lines] (-4,-2) grid (5,3);
  \draw [thick, ->] (0,-2)--(0,3) node[right]{$y$};
  \draw [thick, ->] (-4,0)--(5,0) node[right]{$x$};
  \fill (-15:4.5) circle(2pt) node[below]{$\int_{\psi_2} Z_+$};
  \fill (-8:4) circle(2pt) node[below]{$\int_{\psi_2} Z_+ - 
  2 ( 2 \sqrt{s_2 \tau_2} - \sqrt{s_3 \tau_3}  )$};
  \fill (180:2.5) circle(2pt) node[above]{$\pi_1 - \pi_2$};
  \fill (90:1) circle(2pt) node[above]{$- 2 \sqrt{s_2 \tau_2}$};
  \draw [thick] (180:2.5)--(90:1);
  \draw [thick, ->] (.7,0) arc (0:-112:.7);
  \draw [thick] (0,0)--(-112:.7);
  \draw (.3,-.3) node {$\frac{\varphi^*}2$};
  \draw (-3.5,2.5) node {$a = 1$};
 \end{tikzpicture}

 Now let us lift the assumption $a=1$, and let $a = e^{\sqrt{-1} \varphi} \in S^1$ vary. 
 By the action~\eqref{eq:action_Hopf_circle} and definition~\eqref{eq:phi*}, 
 $\varphi^*$ is the unique angle satisfying the property that when one simultaneously 
 rotates the vectors~\eqref{eq:complete_elliptic_integral} 
 and $- 2 \sqrt{s_2 \tau_2}$ by angle $\varphi^*/2$ then they get carried to vectors with equal real parts. As~\eqref{eq:complete_elliptic_integral} 
 is a negative real number and $- 2 \sqrt{s_2 \tau_2}$ is purely imaginary with positive imaginary part, the angle that achieves this 
 rotation satisfies $\varphi^*/2 \in (- \pi, - \pi /2)$. 
 (Indeed, in the limit $s_2 \to 0$ we have $\varphi^*/2 \to - \pi /2$.)

 \begin{tikzpicture}[set style = {{help lines}+=[dashed]}]
  \draw[style = help lines] (-5,-4) grid (3,3);
  \draw [thick, ->] (0,-4)--(0,3) node[right]{$y$};
  \draw [thick, ->] (-5,0)--(3,0) node[right]{$x$};
  \fill (-120:4) circle(2pt) node[above]{$\int_{\psi_2} Z_+ - 
  2 ( 2 \sqrt{s_2 \tau_2} - \sqrt{s_3 \tau_3}  )$};
  \fill (68:2.5) circle(2pt) node[above]{$\pi_1 - \pi_2$};
  \fill (-22:1) circle(2pt) node[below]{$- 2 \sqrt{s_2 \tau_2}$};
  \draw [thick] (68:2.5)--(-22:1);
  \draw (-4.5,2.5) node {$a = e^{\sqrt{-1} \varphi^*}$};
 \end{tikzpicture}
 
 The second assertion of the proposition is equivalent to stating that this 
 rotation brings the complex number~\eqref{eq:4th_quadrant} to one in either 
 the second or third quadrant. Now, this assertion follows 
 from~\eqref{eq:4th_quadrant_Re}--\eqref{eq:4th_quadrant_Im} 
 by a straightforward geometric inspection (see the above figure). 
 Proposition~\ref{prop:twist} then implies the third assertion. 
\end{proof}

Similarly to $U_2$ we now define the dense open set 
$$
    U_3 = U_3 (s_4 )  \subset S^3_1
$$ 
consisting of the quadratic differentials $q$ satisfying the conditions 
\begin{equation*}
     0 \neq \pi_0 (q) - \pi_4 (q) \neq 
     \pm 2 \sqrt{s_4} (\sqrt{\tau_0} (q) - \sqrt{\tau_4}(q)).  
\end{equation*}
The action of the Hopf circle co-ordinate is again given by~\eqref{eq:action_Hopf_circle}, 
and analogously to~\eqref{eq:section_S2}, we now define 
\begin{equation}\label{eq:section_S4}
 S_3\colon t(U_3) \to S^3_1
\end{equation}
to be the unique section fulfilling for any $[a:b]\in t(U_3) \subset \CP1$ 
the condition 
$$
    | \Re ( \pi_0 ( S_3 ( [a:b] ) ) - \pi_4 ( S_3 ( [a:b] ) ) ) | = 
    2 \sqrt{s_4}  \Re (\sqrt{\tau_0} ( S_3 ( [a:b] ) ) - \sqrt{\tau_4}( S_3 ( [a:b] ) )). 
$$

We now state the analog of Proposition~\ref{prop:twist_t1} for 
the complex twist co-ordinates $[p_3 : q_3]$. We fix base points 
$x_2, x_3$ provided in the proof of Proposition~\ref{prop:twist_t1}. 

\begin{prop}\label{prop:twist_t1_bis}
Let $q' = S_3 (t_1)$. Then the quadratic differential $q'$ belongs to $U_3 (s_4)$
for every $s_4 > 0$ and we have  $\Re ( \pi_0 (q) - \pi_4 (q) ) \neq 0$. 
Moreover, there exists 
$$
    x_4 \in \CP1 \setminus (D\cup \{ x_2, x_3 \})
$$ 
and 
$$
    \rho' = \rho' (q, t_0, \ldots , t_4, x_4)>0
$$ 
such that for every $0 < s_3, s_4 < \rho'$ inequality~\eqref{eq:inequality4} holds. 
In particular, we then have $[p_3 : q_3]\to [0:1]$ as $R\to \infty$. 
\end{prop}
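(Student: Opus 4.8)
The plan is to mimic the proof of Proposition~\ref{prop:twist_t1}, with the pair of punctures $t_0, t_4$, the path $\psi_3$ and Proposition~\ref{prop:twist3} replacing $t_1, t_2$, $\psi_2$ and Proposition~\ref{prop:twist}. Since $q' = S_3(t_1)$ again satisfies $t(q') = t_1 = 0$, after normalising $a$ the quadratic differential is
\begin{equation*}
   Q(z) = \frac{a\,\mbox{d}z^{\otimes 2}}{(z^2-1)\left(z^2 - \frac 1{k^2}\right)}, \qquad a \in S^1,
\end{equation*}
and $Z_+ = \sqrt{a}\,\mbox{d}z/\sqrt{(z^2-1)(z^2-1/k^2)}$ with the determination~\eqref{eq:sign_Z}. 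First I would compute $\tau_0, \tau_4$ from~\eqref{eq:tauj}: for $a = 1$ one finds $\tau_4 = k^3/\bigl(2(1-k^2)\bigr) > 0$ and $\tau_0 = -\tau_4 < 0$, so, with the sign convention~\eqref{eq:sign_choice}, $\sqrt{\tau_4}$ is real (positive) and $\sqrt{\tau_0}$ purely imaginary --- the precise counterpart of the relation $-\tau_3 = \tau_2$ used before. The difference $\pi_0 - \pi_4 = \int_{t_4}^{t_0} Z_+$, whose contour must run inside the region $X_4$ and therefore through $z = \infty$, is then (up to the branch factors of~\eqref{eq:sign_Z}) a nonzero real multiple of a complete elliptic integral of the first kind. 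In particular $\pi_0(q') - \pi_4(q') \neq \pm 2\sqrt{s_4}(\sqrt{\tau_0} - \sqrt{\tau_4})$ because the right-hand side is non-real, which gives $q' \in U_3(s_4)$ for every $s_4 > 0$; and $\Re(\pi_0(q') - \pi_4(q')) \neq 0$ follows because the Hopf angle $\varphi^*$ selected by $S_3$ cannot equal $\pi$ (otherwise the defining equality of $S_3$ would force a nonzero purely imaginary quantity to vanish), exactly as in the proof of Proposition~\ref{prop:twist_t1}.

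Next I would fix a base point $x_4 \in \CP1 \setminus (D \cup \{x_2, x_3\})$, conveniently a real point lying outside $[-1/k, 1/k]$, and compute $\int_{\psi_3} \Re Z_+$ by decomposing $\psi_3$ into the sub-paths it traverses between consecutive punctures --- exactly as the segment $[-1-\varepsilon, 1-\varepsilon]$ was split for $\psi_2$. On each sub-path the branch of $\sqrt{(z^2-1)(z^2-1/k^2)}$ is the one dictated by~\eqref{eq:sign_Z}, so every contribution is an (in)complete elliptic integral with explicit real and imaginary parts. The conclusion will be that, for $a = 1$, the complex number
\begin{equation*}
   \int_{\psi_3} Z_+ - 2\bigl(\sqrt{s_3\tau_3} + \sqrt{s_4\tau_0} - 2\sqrt{s_4\tau_4}\bigr)
\end{equation*}
lies in a prescribed open half-plane once $s_3, s_4$ are small; choosing $\rho' > 0$ so that $|2\sqrt{s_3\tau_3}|$, $|2\sqrt{s_4\tau_0}|$ and $|4\sqrt{s_4\tau_4}|$ are each dominated by the relevant elliptic integral for all $0 < s_3, s_4 < \rho'$ --- and, if necessary, replacing $Z_+$ by $-Z_+$, which by Remark~\ref{rem:change_sign} reverses the integral and the right-hand side simultaneously --- places us in the situation where inequality~\eqref{eq:inequality4} holds.

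Finally I would lift the normalisation $a = 1$: by~\eqref{eq:action_Hopf_circle}, rotating $q'$ along its Hopf fibre multiplies $\pi_0, \pi_4, \tau_0, \tau_4$ and $\int_{\psi_3} Z_+$ by the same unit $e^{\sqrt{-1}\varphi/2}$, and $S_3$ selects the unique $\varphi^*$ with $|\Re(\pi_0 - \pi_4)| = 2\sqrt{s_4}\,\Re(\sqrt{\tau_0} - \sqrt{\tau_4})$. A geometric inspection of the same kind as the quadrant argument (and the accompanying figures) in the proof of Proposition~\ref{prop:twist_t1} shows that inequality~\eqref{eq:inequality4} survives this rotation, whence Proposition~\ref{prop:twist3} applies and yields $[p_3 : q_3] \to [0:1]$ as $R \to \infty$ for those $\L_{\E}$ satisfying the residual congruence $\int_{\eta_0 - \eta_4} B_{\L_{\E}} \equiv 0 \pmod{\pi\sqrt{-1}}$. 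The step I expect to be the main obstacle is, as in Proposition~\ref{prop:twist_t1} but somewhat more delicate here, the bookkeeping of the determinations of $Z_+$ and of $\sqrt{\tau_j}$: the contour computing $\pi_0 - \pi_4$ (and the path $\psi_3$) now run ``around the back'' of the configuration through $z = \infty$, so one must follow the branch of $\sqrt{(z^2-1)(z^2-1/k^2)}$ across the outer intervals $z < -1/k$ and $z > 1/k$ of~\eqref{eq:sign_Z} and check that the resulting signs put the relevant complex combination into the quadrant required for~\eqref{eq:inequality4}.
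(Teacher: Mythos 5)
Your proposal follows the paper's proof essentially verbatim: normalise $a' = 1$, compute $\tau_0 = -\tau_4 = k\tau_2$ and $\pi_0(q') - \pi_4(q') = -2kK(k)$ as elliptic integrals, deduce $q' \in U_3(s_4)$ and $\Re(\pi_0 - \pi_4) \neq 0$ from the non-collinearity of $\pi_0 - \pi_4$ with $\sqrt{\tau_0} - \sqrt{\tau_4}$, pick $x_4 > 1/k$ and $\rho'$ so the $\sqrt{s\tau}$ error terms are dominated, and then lift to general Hopf angle by the rotation argument. The one small point to note is that your contour for $\pi_0 - \pi_4$ ``through $\infty$'' gives, by the $z \mapsto -z$ symmetry of $q'$, exactly the same value the paper obtains by integrating along the interior segment $[-1/k,1/k]$, so the branch bookkeeping you rightly identify as the delicate step does close up as expected.
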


\begin{proof}
 Using the results of the proof of Proposition~\ref{prop:twist_t1}, we again have 
 \begin{equation}\label{eq:quadratic_differential'}
     q' (z,w) = a' z^2 (z^2 - 1) \left( z^2 - \frac 1{k^2} \right)
 \end{equation}
 for some $a' \in S^1$. 
 For the quadratic differential given by this formula with the choice $a' = 1$ we find 
 $$
    - \tau_4 = \tau_0 = \frac{k}{2\left( 1 - \frac 1{k^2} \right)} = k \tau_2, 
 $$
 so that 
 $$
    \tau_2 < \tau_0 < 0 < \tau_4 < \tau_3 . 
 $$
 Furthemore, we have 
 \begin{align*}
      \pi_0 - \pi_4 & = \int_{t_4}^{t_0} 
      \frac{\mbox{d} z}{\sqrt{(z^2 - 1) \left( z^2 - \frac 1{k^2} \right)}} \\ 
      & = - \int_{-\frac 1k}^{-1} - \int_{-1}^1 - \int_1^{\frac 1k} \\
    & = - 2 k K(k) 
 \end{align*}
 because the first and third integrals in the second line cancel each other, 
 the integrand being an odd function on the underlying intervals~\eqref{eq:sign_Z}. 
 To prove the first assertion, we merely realize that by the choices of 
 signs~\eqref{eq:sign_tau} the argument of the complex number 
 $\sqrt{\tau_0}  - \sqrt{\tau_4}$ is $\frac{3\pi}{4}$ while that of 
 $\pi_0 - \pi_4$ is $\pi$, so the origin is not incident to the line connecting 
 these points. 
 
 \begin{tikzpicture}[set style = {{help lines}+=[dashed]}]
  \draw[style = help lines] (-6,-1) grid (1,6);
  \draw [thick, ->] (0,-1)--(0,6) node[right]{$y$};
  \draw [thick, ->] (-6,0)--(1,0) node[right]{$x$};
  \fill (180:5) circle(2pt) node[above]{$- 2 k K(k) $};
  \fill (135:1.2) circle(2pt) node[above]
  {$2 \sqrt{s_4} (\sqrt{\tau_0} - \sqrt{\tau_4})$};
  \fill (105:5.5) circle(2pt) node[above]
  {$\int_{\psi_3} Z_+$};
  \fill (95:5) circle(2pt) node[below] 
  {$\int_{\psi_3} Z_+ - 2 ( \sqrt{s_3 \tau_3} + \sqrt{s_4 \tau_0} - 
    2 \sqrt{s_4 \tau_4} )$};
  \draw [thick] (180:5)--(135:1.2);
  \draw [thick, ->] (.7,0) arc (0:-101.5:.7);
  \draw [thick] (0,0)--(-101.5:.7);
  \draw (.3,-.3) node {$\sqrt{a'}$};
  \draw (-5.5,5.5) node {$a' = 1$};
 \end{tikzpicture}

 In order to prove the second assertion, we choose $x_4 > \frac 1k$ so that 
 $$
    \left\vert \int_{\frac 1k}^{x_4} 
    \frac{\mbox{d} z}{\sqrt{(z^2 - 1) \left( z^2 - \frac 1{k^2} \right)}} \right\vert  
    = 2 
    \int_{1-\varepsilon}^1 \frac{\mbox{d} z}{\sqrt{(z^2 - 1) \left( z^2 - \frac 1{k^2} \right)}}.
 $$
 Such a choice is clearly possible for fixed $\varepsilon \ll 1$, and we see that 
 \begin{equation}\label{eq:x4}
  x_4 \to \frac 1k = t_4 \qquad \mbox{as} \quad  \varepsilon \to 0.
 \end{equation}
 Note that by~\eqref{eq:sign_Z} the integral within the absolute value signs 
 on the left-hand side is negative, while the one on the right-hand side is positive. 
 We then see that the real part of 
 \begin{equation*}
      \int_{x_3}^{x_4} \frac{\mbox{d} z}{\sqrt{(z^2 - 1) \left( z^2 - \frac 1{k^2} \right)}} = \int_{1-\varepsilon}^1 + \int_1^{\frac 1k} + \int_{\frac 1k}^{x_4} 
 \end{equation*}
 is negative, and its imaginary part is positive. 
  
 Now, there exists $\rho' > 0$ such that for every $0 < s_3, s_4 < \rho'$ we have 
 \begin{align*}
  2 | \sqrt{s_4 \tau_4} | & < \min \left( \frac{kK(k)}2, \frac 14
   \left\vert \int_1^{\frac 1k} 
   \frac{\mbox{d} z}{\sqrt{(z^2 - 1) \left( z^2 - \frac 1{k^2} \right)}} \right\vert  \right)  , \\
   2 | \sqrt{s_3 \tau_3} | & < \min \left( \frac{kK(k)}2, \frac 14
   \left\vert \int_1^{\frac 1k} 
   \frac{\mbox{d} z}{\sqrt{(z^2 - 1) \left( z^2 - \frac 1{k^2} \right)}} \right\vert  \right) .
 \end{align*}
 The choice of $a'\in S^1$ giving rise to $q' = S_3 (t_1)$ is singled out 
 by the criterion that image under multiplication by $\sqrt{a'}$ of the line 
 segment connecting $\pi_0 - \pi_4$ and $\sqrt{\tau_0}  - \sqrt{\tau_4}$ be parallel 
 to the imaginary axis, the images of these points having positive real part. 
 The same geometric consideration as in the proof of Proposition~\ref{prop:twist_t1} 
 then concludes the proof.
 
 \begin{tikzpicture}[set style = {{help lines}+=[dashed]}]
  \draw[style = help lines] (0,-1) grid (6,5);
  \draw [thick, ->] (0,-1)--(0,5) node[right]{$y$};
  \draw [thick, ->] (0,0)--(6,0) node[right]{$x$};
  \fill (78.5:5) circle(2pt) node[right]{$\pi_0(q') - \pi_4(q')$};
  \fill (33.5:1.2) circle(2pt) node[below]
  {$2 \sqrt{s_4} (\sqrt{\tau_0 (q')} - \sqrt{\tau_4(q')})$};
  \draw [thick] (78.5:5)--(33.5:1.2);
  \fill (3.5:5.5) circle(2pt) node[above]{$\int_{\psi_3} Z_+ (q' )$};
  \fill (-6.5:5) circle(2pt) node[below] 
  {$\int_{\psi_3} Z_+ - 2 ( \sqrt{s_3 \tau_3} + \sqrt{s_4 \tau_0} - 
    2 \sqrt{s_4 \tau_4} )$};
 \end{tikzpicture}
 
\end{proof}

We now choose $x_2, x_3, x_4$ as in Propositions~\ref{prop:twist_t1} 
and~\ref{prop:twist_t1_bis}, and set $\rho'' = \min ( \rho, \rho')$ 
where $\rho, \rho' > 0$ are the scalars provided by the Propositions. 

\begin{prop}\label{prop:p2p30}
 There exist $0< s_2, s_3, s_4 < \rho''$ such that $S_2 (t_1) = S_3 (t_1)$. 
 Let us denote this point by $q^*$. 
 Then, for the choice $q^*\in S^3_1$, we have $[p_2 : q_2]\to [0:1]$ 
 and $[p_3 : q_3]\to [0:1]$ as $R\to \infty$. 
\end{prop}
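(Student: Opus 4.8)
The plan is to combine Propositions~\ref{prop:twist_t1} and~\ref{prop:twist_t1_bis} with a dimension-counting argument on the space of admissible radii $(s_2, s_3, s_4)$. The key observation is that both sections $S_2$ and $S_3$ of the Hopf fibration are evaluated at the \emph{same} base point $t_1 \in \CP1$, so the points $S_2(t_1)$ and $S_3(t_1)$ lie in the same Hopf fiber $t^{-1}(t_1) \subset S^3_1$; they are therefore both of the form $e^{\sqrt{-1}\varphi} q_0$ for a fixed reference point $q_0$ in that fiber, and each is determined by an angular parameter. First I would set up the two functions
\begin{align*}
 \varphi_2 &\colon (0, \rho'') \to \R/2\pi\Z, \qquad s_2 \mapsto \varphi^*(s_2), \\
 \varphi_3 &\colon (0, \rho'') \times (0,\rho'') \to \R/2\pi\Z, \qquad (s_3, s_4) \mapsto \text{the angle singling out } S_3(t_1),
\end{align*}
where $\varphi_2$ is the defining angle of~\eqref{eq:phi*} specialized to the fiber over $t_1$ (which by the computation in Proposition~\ref{prop:twist_t1} depends only on $s_2$ through $\sqrt{s_2\tau_2}$), and $\varphi_3$ is its analog from Proposition~\ref{prop:twist_t1_bis}. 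The equation $S_2(t_1) = S_3(t_1)$ is then equivalent to $\varphi_2(s_2) = \varphi_3(s_3, s_4)$ in $\R/2\pi\Z$.

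Next I would argue that the range of $\varphi_2$ and the range of $\varphi_3$ overlap. From the explicit analysis in the proof of Proposition~\ref{prop:twist_t1}, as $s_2 \to 0^+$ one has $\varphi^*/2 \to -\pi/2$ (the vector $-2\sqrt{s_2\tau_2}$ is purely imaginary and $\pi_1 - \pi_2$ is negative real, so the rotation needed to equalize real parts tends to $-\pi/2$), and $\varphi^*$ varies continuously — in fact real-analytically — in $s_2$, sweeping out an interval as $s_2$ ranges over $(0,\rho'')$. Similarly, the proof of Proposition~\ref{prop:twist_t1_bis} shows that for $q'$ with $a'=1$ the vectors $\pi_0 - \pi_4$ and $\sqrt{\tau_0}-\sqrt{\tau_4}$ have arguments $\pi$ and $3\pi/4$ respectively, so the angle $\varphi_3$ defining $S_3(t_1)$ also tends to a definite limit as $s_3, s_4 \to 0^+$ and varies continuously. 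Computing these two limiting angles explicitly (both are close to $-\pi/2$, up to the contributions of the $\sqrt{s_j\tau_j}$ terms, which vanish in the limit), I would check that they either coincide or that the continuity/analyticity of $\varphi_2$ and $\varphi_3$ forces their images to intersect for suitably small parameters; by the intermediate value theorem applied to $\varphi_2(s_2) - \varphi_3(s_3,s_3)$ (setting $s_3 = s_4$ to reduce to one variable, or keeping both free) there is a solution $(s_2,s_3,s_4)$ with all three radii in $(0,\rho'')$. Call the resulting common point $q^* = S_2(t_1) = S_3(t_1)$.

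Finally, once $q^*$ is fixed, the conclusion $[p_2:q_2]\to[0:1]$ and $[p_3:q_3]\to[0:1]$ as $R\to\infty$ is immediate: since $s_2, s_3 < \rho'' \le \rho$, Proposition~\ref{prop:twist_t1} applies to $q^* = S_2(t_1)$ and gives $[p_2:q_2]\to[0:1]$, and since $s_3, s_4 < \rho'' \le \rho'$, Proposition~\ref{prop:twist_t1_bis} applies to $q^* = S_3(t_1)$ and gives $[p_3:q_3]\to[0:1]$. Both conclusions hold \emph{simultaneously} precisely because $q^*$ is simultaneously in the image of $S_2$ over $t_1$ and in the image of $S_3$ over $t_1$. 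I expect the main obstacle to be the matching step: verifying that the two angular functions $\varphi_2$ and $\varphi_3$ have overlapping ranges, which requires a careful bookkeeping of the signs and arguments of the period integrals and of the $\sqrt{s_j\tau_j}$ corrections (the figures in the proofs of Propositions~\ref{prop:twist_t1} and~\ref{prop:twist_t1_bis} should make this tractable, since in the limit $s_j \to 0$ both angles are pinned down exactly and continuity does the rest). A minor subtlety to keep in mind is that $\varphi^*$ is only well-defined modulo the sign ambiguity of $Z_+$, but as noted in Remark~\ref{rem:change_sign} this does not affect the asymptotic behavior of the twist co-ordinates, so the argument is unaffected.
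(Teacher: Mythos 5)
Your proposal is correct and follows essentially the same route as the paper: observe that $S_2(t_1)$ and $S_3(t_1)$ lie in the same Hopf fiber so only the Hopf circle coordinate needs matching, set up that coordinate as a continuous function of the relevant radius, and invoke a continuity/intermediate-value argument (the paper uses the mean value theorem, phrased as making two line segments parallel by shrinking whichever of $s_2,s_4$ yields the larger slope) to find a solution, then quote Propositions~\ref{prop:twist_t1} and~\ref{prop:twist_t1_bis}. One small imprecision: the defining angle for $S_3$ in~\eqref{eq:section_S4} depends only on $s_4$, not on $(s_3,s_4)$; $s_3$ enters only through the auxiliary inequality~\eqref{eq:inequality4} governing the twist asymptotics, not through the matching equation, so the matching problem is really a one-parameter condition $\varphi_2(s_2)=\varphi_3(s_4)$, exactly as in the paper's proof.
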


\begin{proof}
 Making use of the notations of Subsection~\ref{subsec:ramification}, the Hopf 
 co-ordinates $\theta, \phi$ of $S_2 (t_1)$ and $S_3 (t_1)$ agree by construction, 
 because $q,q'$ both map to $t_1$ by the Hopf map. 
 In order to achieve $S_2 (t_1) = S_3 (t_1)$, all we need is to make sure that the 
 Hopf circle co-ordinate $\varphi$ of the point $q$ in Proposition~\ref{prop:twist_t1} 
 agrees with the one of the point $q'$ in Proposition~\ref{prop:twist_t1_bis}; 
 the common point that they define will then be $q^*\in S^3_1$. 
 Equivalently, the condition may be rephrased by requiring that equal
 rotations bring the indicated straight line segment on the figure corresponding to the case $a=1$ in the proof of
 Proposition~\ref{prop:twist_t1} and the one corresponding to the case $a'=1$ in 
 the proof of Proposition~\ref{prop:twist_t1_bis} into vertical position. 
 Still equivalently, the named segments need to be parallel to each other. 
 
 In order to find $0 < s_2, s_4 < \rho''$ fulfilling this condition, fix any 
 $0 < s_2, s_4 < \rho''$. If the line segments are parallel then we are done. 
 Otherwise there are two cases: if the slope of the line connecting 
 $- k K(k) $ to $-2 \sqrt{s_2 \tau_2}$ is greater than that of the line connecting 
 $- 2 k K(k) $ to $2 \sqrt{s_4} (\sqrt{\tau_0} - \sqrt{\tau_4})$, then we need to 
 decrease $s_2$ (keeping $s_4$ fixed) until they become parallel. 
 If the relation between the slopes is the opposite then we need to decrease $s_4$ 
 (keeping $s_2$ fixed) until they become parallel. 
 In both cases a simple application of the mean value theorem shows that there exists 
 a unique value of $s_2$ or $s_4$ with the required property. 
 
 Propositions~\ref{prop:twist_t1} and~\ref{prop:twist_t1_bis} now show the second 
 statement. 
\end{proof}

\subsection{Proof of Theorem~\ref{thm:main}}
We are in position to prove our main result Theorem~\ref{thm:main}. 

Let us first reformulate a few of our results obtained thus far. 
According to Subsection~\ref{subsec:weight_filtration}, the highest graded piece 
$\Gr^W_{8} H^4 (\Mod_{\Betti} , \C )$ of the MHS on the cohomology of the 
character variety is spanned by $0$-cycles in the union $\tilde{D}^4$ of 
quadruple intersections of the compactifying divisor components; 
clearly, $\tilde{D}^4$ is a finite union of points in $\overline{\Mod}_{\Betti}$. 
It will turn out that these cycles also govern $\Gr^W_{2k} H^k$ for 
all $0 \leq k \leq 4$. 
Let us denote by $D_1, D_2, D_3, D_4$ the divisor components of 
$\overline{\Mod}_{\Betti}\setminus {\Mod}_{\Betti}$ given in order by the equations 
$$
    l_2 = \infty, \quad [p_2\colon q_2] = [0\colon 1], \quad 
    l_3 = \infty, \quad [p_3\colon q_3] = [0\colon 1]. 
$$
Let us denote by 
$$
    Q^* = D_1 \cap D_2 \cap D_3 \cap D_4 \in \tilde{D}^4 \subset \overline{\Mod}_{\Betti}
$$
their intersection point and fix a punctured neighbourhood $U(Q^* )$ of $Q^*$ in 
$\Mod_{\Betti}$. It follows from the choices made in the proofs of 
Propositions~\ref{prop:twist_t1} and~\ref{prop:twist_t1_bis}, 
and in particular from~\eqref{eq:x4} that 
$$
    \lim_{\varepsilon \to 0} x_2 = t_3, \qquad \lim_{\varepsilon \to 0} x_3 = t_2, 
    \qquad \lim_{\varepsilon \to 0} x_4 = t_4.
$$
We may assume by homotopy that 
$$
    x_2 = t_3, \qquad x_3 = t_2, \qquad x_4 = t_4. 
$$
It follows from Proposition~\ref{prop:monodromy_zeta2} that as $\L_{\E}$ ranges 
over $H^{-1} (Rq)$ for any $q\in U_2 \cap U_3$ along a path so that 
$$
    \int_{\eta_2 - \eta_1} B_{\L_{\E }}
$$
starts at $0$ and ends at $2\pi\sqrt{-1}$, the point $\RH \circ \psi (\E , \theta  )$ 
winds around $D_1$ once (either in positive or in negative direction). 
Similarly, from Proposition~\ref{prop:monodromy_zeta3} we see that if 
$$
    \int_{\eta_0 - \eta_4} B_{\L_{\E }} 
$$
varies from $0$ to $2\pi\sqrt{-1}$,  $\RH \circ \psi (\E , \theta  )$ 
winds around $D_3$ once. 
We make choices as in Proposition~\ref{prop:p2p30}, in particular we study the 
Hitchin fiber over $Rq^*$ for $R\gg 0$. 
There exists a real $2$-parameter family of $(\E , \theta ) \in H^{-1} (Rq^*)$ 
such that 
\begin{align}
    \int_{\eta_2 - \eta_1} B_{\L_{\E }} & \equiv 0 \pmod{\pi\sqrt{-1}}, 
    \label{eq:congruence_1} \\
    \int_{\eta_0 - \eta_4} B_{\L_{\E }} & \equiv 0 \pmod{\pi\sqrt{-1}}.
    \label{eq:congruence_2}
\end{align}
It follows from 
Propositions~\ref{prop:monodromy_zeta2},~\ref{prop:monodromy_zeta3},~\ref{prop:twist},~\ref{prop:twist3} 
that if $R$ is chosen sufficiently large, then for any $(\E , \theta ) \in H^{-1} (Rq^*)$ 
satisfying~\eqref{eq:congruence_1}--\eqref{eq:congruence_2} the Fenchel--Nielsen 
co-ordinates 
$$
    l_2 (\E , \theta ) , \quad [p_2(\E , \theta )\colon q_2(\E , \theta )], 
    \quad l_3(\E , \theta ), \quad [p_3(\E , \theta )\colon q_3(\E , \theta )]
$$
of $\RH \circ \psi (\E , \theta  )$ belong to $U(Q^* )$. 
Fix $R\gg 0$ so that this holds. Moreover, the same results also imply that the 
phase factors of the Fenchel--Nielsen co-ordinates of $\RH \circ \psi (\E , \theta  )$ 
defining $D_1, D_2, D_3, D_4$ are in this order given by the following expressions: 
\begin{align*}
  - \exp & \left( 2 \left\vert \int_{t_1}^{t_2} B_{\L_{(\E , \theta  )}}\right\vert \right), \\
  \exp & \left( - 2 \int_{t_3}^{t_2} B_{\L_{(\E , \theta  )}} \right), \\
  - \exp & \left( 2 \left\vert \int_{t_0}^{t_4} B_{\L_{(\E , \theta  )}}\right\vert \right), \\
  \exp &\left( -2 \int_{t_2}^{t_4} B_{\L_{(\E , \theta  )}} \right), 
\end{align*}
the respective contours being $\eta_2 - \eta_1, \psi_2, \eta_4 - \eta_0,  \psi_3$. 
Notice that there exists an involution-equivariant base $A_1, A_2, B_1, B_2$ of 
$H_1(X_{q^*}, \Z)$ that project down to the above contours by $p_{q^*}$. 
It follows (possibly up to switching the orientation of some of the generators 
$A_i, B_i$) that the above quantities can be rewritten as 
\begin{align}
  - \exp & \left( \int_{A_1} B_{\L_{(\E , \theta  )}} \right), \label{eq:phase1} \\
  \exp & \left( \int_{B_1} B_{\L_{(\E , \theta  )}} \right), \label{eq:phase2}\\
  - \exp & \left( \int_{A_2} B_{\L_{(\E , \theta  )}} \right), \label{eq:phase3}\\
  \exp &\left( \int_{B_2} B_{\L_{(\E , \theta  )}} \right). \label{eq:phase4}
\end{align}
We infer from Subsection~\ref{subsec:spectral_curve} that the image of 
$H^{-1} (R q^* )$ under $\RH \circ \psi$ is homotopic to a torus $T^4$ generating 
$H_4 ( U(Q^* ), \Z )$. 
Recall from Subsection~\ref{subsec:perverse_filtration} that we have 
$$
    \Gr_P^{-k-2} H^k (\Mod_{\Dol}, \Q ) \cong 
    \operatorname{Im} ( H^k (\Mod_{\Dol}, \Q ) \to H^k (H^{-1} (Y_{-2}), \Q) )
$$
where $H^{-1} (Y_{-2})$ is the generic Hitchin fiber. We choose the affine flag 
so that $Y_{-2} = \{ Rq^* \}$. 
For every $0 \leq k \leq 4$ and any subset $I\subset \{ 1,2,3,4 \}$ 
with $|I| = 4-k$ one may define a $k$-dimensional subtorus $T_I^k$ in 
$H^{-1} (Y_{-2})$ by fixing the phases corresponding to the divisor components 
$D_i$ with $i\in I$. 
Let us assume that $T_I^k$ defines a non-trivial homology class in 
$H_k (\Mod_{\Dol}, \Z )$. 
Such classes are precisely the ones that generate $\Gr_P^{-k-2} H^k (\Mod_{\Dol}, \Q )$. 
It is then easy to see that image of $T_I^k$ under $\RH \circ \psi$ is homotopic to 
a normal torus at the generic point of the intersection 
$$
    \bigcap_{j\in \{ 1,2,3,4 \} \setminus I} D_j 
$$
of the remaining $k$ divisor components. 
According to the conventions of Subsection~\ref{subsec:weight_filtration}, 
$\RH \circ \psi (T_I^k)$ then defines a class in $W_{-2k} H_k ( U(Q^* ), \Z )$ 
(that is non-trivial by assumption), and the dual cohomology class gives a 
non-trivial class in $W_{2k} H^k ( U(Q^* ), \Z )$. Since the map 
$$
    H^k (\Mod_{\Betti}, \C ) \to H^k ( U(Q^* ), \C ) 
$$
preserves $W$ strictly, this finishes the proof.

\end{document}